\documentclass[12pt,a4paper,oneside,reqno]{amsart}
\usepackage{fancyhdr}
\usepackage{tikz}
\usepackage{pgfplots}
\pgfplotsset{compat=1.18}
\usepackage{amssymb,mathrsfs,amsmath,amsthm}
\usepackage[numbers,sort&compress]{natbib}
\usepackage{float}
\usepackage{authblk}
\usepackage{url}
\usepackage[numbers]{natbib}
\usepackage{setspace}
\usepackage{mathtools}
\usepackage{comment}
\usepackage[inline]{enumitem}
\usepackage[
  colorlinks=true,
  linkcolor=blue,
  citecolor=blue,
  urlcolor=blue,
  pagebackref,
  pdfauthor={Your Name},
  pdftitle={Your Document Title},
  pdfkeywords={keyword1, keyword2, keyword3}
]{hyperref}
\usepackage{booktabs}
\usepackage{comment}
\usepackage{hyperref}
\usepackage[margin=1in]{geometry} 

\vfuzz2pt 
\hfuzz2pt 

\newtheorem{theoremA}{Theorem}

\newtheorem{thm}{Theorem}[section]
\newtheorem{cor}[thm]{Corollary}
\newtheorem{lem}[thm]{Lemma}
\newtheorem{prop}[thm]{Proposition}
\theoremstyle{definition}

\newtheoremstyle{boldremark}
  {10pt}  
  {10pt}   
  {}      
  {}       
  {\bfseries} 
  {.}      
  { }     
  {}  
\theoremstyle{boldremark}
\newtheorem{Remark}[thm]{\textbf{Remark}}
\numberwithin{equation}{section}
\newenvironment{mathclass}
  {Mathematics Subject Classification (2010):}
 

\newcommand{\R}{\mathbb{R}}
\newcommand{\C}{\mathbb{C}}

\renewcommand{\keywords}[1]{%
  \par\noindent
  Keywords: #1
  \par
}

\def\R {\mathbb{R}}


\newcommand{\be}{\begin{equation}}
\newcommand{\ee}{\end{equation}}
\newcommand{\bea}{\begin{eqnarray}}
\newcommand{\eea}{\end{eqnarray}}
\newcommand{\Bea}{\begin{eqnarray*}}
\newcommand{\Eea}{\end{eqnarray*}}
\newcommand{\bt}{\begin{Theorem}}
\newcommand{\et}{\end{Theorem}}

\newcommand{\bpr}{\begin{Proposition}}
\newcommand{\epr}{\end{Proposition}}

\newcommand{\bl}{\begin{Lemma}}
\newcommand{\el}{\end{Lemma}}
\newcommand{\bi}{\begin{itemize}}
\newcommand{\ei}{\end{itemize}}

\newtheorem{Definition}{Definition}[section]
\newtheorem{Theorem}[Definition]{Theorem}
\newtheorem{Lemma}[Definition]{Lemma}
\newtheorem{Proposition}[Definition]{Proposition}


\title
{On  1D mass subcritical  nonlinear Schr\"odinger and Hartree equations  in
modulation spaces  $M^{p, p'} \ (p<2)$ }
\author{Divyang G. Bhimani}
\address{Divyang G. Bhimani\\Department of Mathematics\\Indian Institute of Science Education and Research\\ Pune 411008\\India}
\email{divyang.bhimani@iiserpune.ac.in}

\author{Diksha Dhingra}
\address{Diksha Dhingra \\Department of Mathematics\\ Indian Institute of Technology\\ Indore, 452020\\ India}
\email{dikshadd1996@gmail.com}

\author{Vijay Kumar Sohani}
\address{Vijay Kumar Sohani\\ Department of Mathematics\\ Indian Institute of Technology\\ Indore 452020\\ India}
\email{vsohani@iiti.ac.in}

\pagestyle{fancy}
\fancyhf{} 
\cfoot{\thepage} 
\setlength{\headheight}{14.71143pt}
\setlength{\footskip}{14.0pt}
\fancypagestyle{plain}{\fancyhf{} }

\begin{document}
\date{}
\maketitle{}
\begin{center}
DIVYANG G. BHIMANI, DIKSHA DHINGRA, VIJAY KUMAR SOHANI
\end{center}
\begin{abstract}
We establish well-posedness theory for the 1D mass-subcritical nonlinear Schr\"odinger equation (NLS) having power-type nonlinearity $|u|^{\alpha-1}u$ in a certain modulation spaces 
$M^{p,p'}(\mathbb R),$ where $p'$ is a H\"older conjugate of $p$, with $4/3<p<2$ and $p$ sufficiently close to $2$. Modulation spaces have been successfully applied in understanding the dynamics of NLS near the Sobolev scaling critical regularity. In fact, despite cubic NLS is ill-posed in $H^s$ for $s<-1/2$;  our analysis reveals that it experiences well-posedness in modulation spaces for a  Cauchy data in $(H^s\setminus L^2) \cap M^{p,p'}$. The proof adopts two different approaches to establish local well-posedness for $\alpha \in (1,5)$: one exploits generalised Strichartz estimates in Fourier-Lebesgue and Lebesgue spaces; the other implements Bourgain’s high-low decomposition (BHLD) method in the modulation space setting. The local solution via the (BHLD) method can be extended to global-in-time, but with a certain loss of regularity.  
We could combine these effectively and establish global well-posedness in $M^{p,p'}$ with the persistence of regularity for $1<\alpha \leq 10/3$. This is the first global result in $M^{p,p'}$
which establishes the persistence of regularity. Similar results are also established for the Hartree equations.\\

\end{abstract}
\footnotetext{\begin{mathclass} Primary 35Q55, 35Q60; Secondary 35A01.
\end{mathclass}
\keywords{Bourgain's high-low frequency decomposition method,  nonlinear Schrödingerr equation, Hartree equation, Boson stars, Global well-posedness, modulation spaces.}}
\section{Introduction}
\subsection{Motivation and Background}\label{mb}
On the  real  line, we  consider the following nonlinear Schr\"odinger  equation 
\begin{equation}\tag{NLS}\label{NLS}
\begin{cases}
    iu_t +u_{xx}\pm (|u|^{\alpha-1}u)=0\\ u(\cdot,0)=u_0
\end{cases}, \quad (1< \alpha<5)  \end{equation}  and  Hartree equation
\begin{equation}\tag{H}\label{NLSH}
\begin{cases}
    iu_t + u_{xx} \pm (|\cdot|^{-\nu} \ast |u|^{2})u=0\\ u(\cdot,0)=u_0
\end{cases}, \quad (0<\nu<1).
\end{equation}
Here $ (x , t ) \in \mathbb R \times \mathbb R $ and $ u(x,t) \in \mathbb C$. 
Formally, the solutions of   \eqref{NLS} and \eqref{NLSH} enjoys the conservation of mass:
\begin{equation}\label{mass}
M[u(t)] = \int_{\R} |u(x, t)|^2 \, dx = M[u_{0}].
\end{equation}
\par{The main goal of this article is to establish well-posedness theory for \eqref{NLS} and \eqref{NLSH} in the setting of low regularity spaces  (modulation spaces $M^{p,p'}, p<2$). See Section \ref{sssm} below for a precise definition. The well-posedness theory for \eqref{NLS} and \eqref{NLSH} in $L^2(\mathbb{R})$ is due to Y. Tsutsumi \cite{YTsutsumi} and Chadam-Glassey \cite{ChadamGlassey}, respectively.  A variety of physically interesting dynamics emerge where the standard \( L^2 \)-theory no longer applies, and low regularity is more desirable.  For instance, the LIA (localised induction approximation)  model for a vortex filament appears in the theory of superfluid turbulence. See \cite[Section 1]{vargas2001global} for details. In fact,  taking this into account,  Vargas and Vega \cite{vargas2001global} and Gr\"unrock \cite{Grunrock} proved local and global well-posedness for cubic ($\alpha=3$) \eqref{NLS} with Cauchy data having infinite $L^2-$norm and Fourier-Lebesgue spaces, respectively. We also refer to the influential papers \cite{MiaoJPDE, MiaoJFA} by Miao, Xu, and Zhao for the Hartree equation. }

On the other hand,  up to now, we cannot establish a well-posedness theory for  \eqref{NLS} and \eqref{NLSH}
with Cauchy data in  $L^p-$spaces except for $p=2$ mainly  because $U(t)=e^{it\partial_x^2}:L^p\to L^p $ iff $p=2$. However,   Zhou \cite{ZhouLp} and Hyakuna \cite{HyakunaJFA, HyakunaIUMJ} 
established  well-posedness in $L^p-$spaces in the sense of ``twisted regularity". To explain this, we recall that the corresponding integral equation to \eqref{NLS} is 
\begin{equation}\label{iq}
    u(t)= U(t)u_0 \pm i \int_0^t U(t-s) \left[|u(s)|^{\alpha-1} u(s) \right]ds.
\end{equation}
The key idea in \cite{ZhouLp,HyakunaJFA, HyakunaIUMJ} is to introduce the twisted variable $v(t)=U(-t)u(t)$ to transform  \eqref{iq} as
\begin{eqnarray}\label{tiq}
    v(t)= u_0 \pm i \int_0^t U(-s)\left[ |U(s) v(s)|^{\alpha-1} U(s)v(s) \right]ds.
\end{eqnarray}
Then, they successfully established well-posedness for the twisted integral \eqref{tiq} in $ L^p$-spaces for $1\leq p< 2$. We refer to  \cite{BhimaniAHP, HyakunaIUMJ, ZhouLp, HyakunaJFA}  for further progress in this direction. However, it is strongly desirable to establish a well-posedness theory of \eqref{iq} (i.e. without any twisting) in many situations. This inspires us to ask the following questions:  
\begin{itemize}
    \item[--] Can we go beyond $L^p-$spaces to establish well-posedness theory for \eqref{NLS}? Can we have a Cauchy data class that contains $ L^p$-spaces, and still expect well-posedness theory for \eqref{NLS}?
\end{itemize}
We recall  \cite{Sugimoto1}  sharp embedding between $L^p$ and modulation spaces:
\begin{equation}\label{dsembeddings}
    L^{p}(\mathbb R) \hookrightarrow M^{p,p'} (\mathbb R) \quad  \text{for} \quad 1\leq p<2
\end{equation}
where $1/p+1/p'=1.$ In view of  embedding  \eqref{dsembeddings} and taking the Cauchy data in   $M^{p,p'}$, we could  answer the aforementioned  question  satisfactorily. See  Theorem \ref{betterregularityNLS} - Corollary \ref{globalprnls} below. 

 We now turn our attention to connecting our analysis with  Sobolev spaces $H^s$. Recently,  Griffiths, Killip and Visan   \cite{BenjaminSharpNLS} showed that cubic \eqref{NLS} is globally well-posed in \(H^s(\mathbb R)\) for $s>-1/2$ (the scaling critical index); while Oh \cite{OhHsIll} proved it is strongly ill-posed (norm-inflation) in $H^s (\mathbb R)$ for $s\leq -1/2$. Taking into account \eqref{srp1}, we have
\begin{eqnarray}\label{dsei}
   M^{p,p'}(\mathbb R) \hookrightarrow  H^{s}(\mathbb R) 
\quad \text{for} \quad  s < \frac{1}{2} - \frac{1}{p},  \ \  p<2,
\end{eqnarray}
 and there exist $u_0\in M^{p,p'} (\mathbb R)   (1\leq p<2)$ such that $u_0\notin L^2(\mathbb R)$. Thus we have $$u_0\in (H^s \setminus L^2)(\mathbb R) \cap M^{p,p'} (\mathbb R). $$  
 Surprisingly, this  reveals that although  cubic \eqref{NLS}  experiences strong ill-posedness (norm inflation) in $H^s$ for $s\leq -1/2;$ there is data in $(H^s\setminus L^2) \cap M^{p,p'}$ for which \eqref{NLS} is well-posed in $M^{p,p'}$. See Theorem \ref{betterregularityNLS} - Corollary \ref{globalprnls} below.
\subsubsection{\textbf{Modulation spaces}}\label{sssm}
\par{In the past 20 years, modulation spaces have been very successful in understanding the dynamics of  \eqref{NLS} and \eqref{NLSH} with rough data (low regularity) and near scaling critical regularity. See \cite{HyakunaIUMJ,bhimani2016cauchy,wang2011harmonic, Ruz4NLS, oh2018global, Kasso2009, KassoBook, Wang2006,Bhimani2016, BhimaniJFA2, BhimaniNorm, BhimaniHartree-Fock,DiracHartree}. Historically, these spaces originated in the pioneering work of H. Feichtinger \cite{Feih83} during the early 1980s, and they are now omnipresent in many applications \cite{Fei2006}. To define these spaces, we set some notations.}
 \par{Let $\rho: \mathbb R \to [0,1]$  be  a smooth function satisfying   $\rho(\xi)= 1 \  \text{if} \ \ |\xi|\leq \frac{1}{2} $ and $\rho(\xi)=
0 \  \text{if} \ \ |\xi| \geq  1$. Let  $\rho_k$ be a translation of $\rho,$ that is,
$ \rho_k(\xi)= \rho(\xi -k) \ (k \in \mathbb Z).$
Denote 
$\sigma_{k}(\xi)= \frac{\rho_{k}(\xi)}{\sum_{l\in\mathbb Z}\rho_{l}(\xi)} \ (k \in \mathbb Z).$  This family $\{\sigma_k\}$ of smooth functions gives a bounded uniform partition of unity. 
The frequency-uniform decomposition operators can be  defined by 
$\square_k = \mathcal{F}^{-1} \sigma_k \mathcal{F} \quad(k \in \mathbb Z).$
Here, $\mathcal{F}= \widehat{\cdot}$ and $\mathcal{F}^{-1}$ denote the Fourier and inverse Fourier transforms, respectively. The \textbf{weighted modulation spaces}  $M^{p,q}_s \ (1 \leq p,q \leq \infty, s \in \R)$ are defined as follows:}
\begin{equation*}
    M^{p,q}_s= M^{p,q}_s(\R)= \left\{ f \in \mathcal{S}'(\R): \left|\left|f\right|\right|_{M^{p,q}_s}=  \left\| \|\square_kf\|_{L^p_x} (1+|k|)^{s} \right\|_{\ell^q_k}< \infty  \right\} . 
\end{equation*}
For $s=0,$ we write $M^{p,q}_0= M^{p,q}.$ For $p=q=2,$ the modulation spaces coincide with \textbf{Sobolev spaces}, i.e. $M^{2,2}_s= H^s \ (s \in \R).$ We also note, unlike $L^p(\mathbb R),$ modulation spaces have the following inclusion relations:
    \begin{equation}\label{srp1}  M_{s_1}^{p_{1},q_{1}}\hookrightarrow M_{s_2}^{p_{2},q_{2}} \quad \text{whenever} \quad
    \begin{cases}
        p_{1}\leq p_{2}, q_{1}\leq q_{2},  s_2 \leq s_1,\\
        p_{1}= p_{2}, q_{2}<q_{1},  s_1-s_2 > \frac{1}{q_2}-\frac{1}{q_1}, 
          \end{cases}
       \end{equation}
for $p_{i},q_{i} \in [1,\infty], (i=1,2)$ and $s_1, s_2 \in \R.$ Refer to \cite{KassoBook,wang2011harmonic} for more details.
Another most interesting property, among others, is the fact that the Schrödinger propagator is uniformly bounded on $M^{p,q}$ spaces for all $p,q$, cf. \cite{chendcds,wango}. Particularly,
\begin{eqnarray}\label{Mpp1} \|e^{i t \partial_x^2}f \|_{M^{p,p'}}\leq  (1+t^{2})^{\frac{1}{2}\left| \frac{1}{p}-\frac{1}{2} \right|}  \|f\|_{M^{p,p'}}.
\end{eqnarray} 
Indeed, this fact enables us to obtain well-posedness results without performing a twisted transformation, whereas, as mentioned above, it was necessary in $ L^p$-spaces.

\subsubsection{\textbf{Known results and gap}}\label{krg}
\par{We briefly recall the following locally well-posedness (LWP for short) and globally well-posedness (GWP) results for \eqref{NLS} and \eqref{NLSH}}.

\begin{theoremA}\label{TH} Let $1\leq p, q \leq \infty$ and $s\in \R.$ Then cubic \eqref{NLS} $(\alpha=3)$ is:
\begin{table}[htbp]
\centering
\begin{tabular}{p{3cm} p{8cm} p{3cm}}
\toprule
&{LWP in $M^{p,q}_{s}$} & \\
\midrule
$s\geq 0$&$ q=1$& \cite{Wang2006, Kasso2009, Bhimani2016,CorderoJMAA} \\
$s>1/q'$ & $1\leq p, q \leq \infty$& \cite{Wang2006, Kasso2009, Bhimani2016,CorderoJMAA} \\
$s=0$&$ 1/q > |1-2/p|$& \cite{Klaus,guo20171d} \\
$s=0$&$p \geq 4, 1/q \geq 1-2/p$& \cite{Klaus} \\
\bottomrule
& GWP in $M^{p,q}_{s}$ &  \\
\bottomrule
$s=0$ & $ 2<p<7/3, q=p'$ &\cite{LeonidIn,leonidthesis}\\
$s=0 $ & $p=2, 1\leq q< \infty$ &\cite{oh2018global}\\
$s=0$ & $ 1\leq q\leq  p \leq 2$ &\cite{Klaus}\\
$s>1/q'$ & $ 1\leq p \leq 2$ &\cite{Klaus}\\
$s>2-1/q$ & $ 2\leq p < \infty$ (defocusing case) & \cite{R.Schippa} \\
$s=1$ & $ 2\leq p< \infty, q=1$ ( defocusing case)&\cite{Klaus}\\
\bottomrule
\end{tabular}
\end{table}
\end{theoremA}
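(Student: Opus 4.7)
The statement is a compilation drawn from a number of independent sources, so the plan is not to produce a single unified argument but to indicate the common scheme underlying the entries and to note where each parameter region requires a distinct technical input.

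The unifying framework is a Picard iteration for the Duhamel integral equation \eqref{iq} set in $X_T = C([0,T],M^{p,q}_s)$, sometimes intersected with an auxiliary Strichartz-type space. The free-evolution bound \eqref{Mpp1} and its analogue for general $M^{p,q}_s$ (uniform in time up to a polynomial factor) reduces the task to a nonlinear estimate of the form $\||u|^{\alpha-1}u\|_{M^{p,q}_s} \lesssim \|u\|_{M^{p,q}_s}^{\alpha}$. For the rows with $q=1$ or $s>1/q'$, this nonlinear estimate is immediate, since $M^{p,1}_s$ is a Banach algebra (indeed embeds into $L^\infty$) and a module over $M^{\infty,1}_s$, so a standard contraction closes for arbitrarily large data. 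For the critical endpoint rows $s=0$, $q>1$, the algebra structure is lost and must be replaced by a dyadic frequency decomposition together with uniform-band Strichartz estimates derived from the dispersive decay of $e^{it\partial_x^2}\square_k$; this is the route followed in the Klaus and Guo--Ning--Wu entries and typically requires a Wang-type $X^{s,b}$ or $U^p$--$V^p$ adaptation to modulation blocks.

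For the GWP rows, the local theory is iterated using a conserved or otherwise controlled quantity. When $p=2$, conservation of mass \eqref{mass} furnishes $L^2$ control directly. For $1 \leq q \leq p \leq 2$, the chain of embeddings in \eqref{srp1} together with \eqref{dsembeddings} places the data inside $L^2$, so the local theory in $M^{p,q}$ combined with mass conservation yields global solutions with persistence of the modulation norm. For the rows with data genuinely outside $L^2$ — most notably $2<p<7/3$, $q=p'$ — one invokes Bourgain's high--low frequency decomposition: split $u_0 = u_{0,\ell}+u_{0,h}$ with $u_{0,\ell}\in L^2$ and $u_{0,h}$ small in the modulation norm, run the mass-subcritical local theory on the low part, then set up a perturbed difference equation for the high-frequency correction and iterate. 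The defocusing rows with high $s$ (Schippa, Klaus) close instead via coercivity of the Hamiltonian, substituting energy conservation for mass conservation.

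The main technical obstacle across the table is the nonlinear estimate at the endpoint $s=0$ with $q>1$, where no algebra is available and Young's inequality on the $\ell^q_k$ profile is too crude. This forces the introduction of Strichartz-type norms on uniform-frequency blocks — exactly the kind of estimates our Theorem~\ref{betterregularityNLS} and Corollary~\ref{globalprnls} will exploit — and, for passage from LWP to GWP below the $L^2$ scaling threshold, a careful accounting of how the nonlinear correction to the low-frequency evolution leaks into the high-frequency component at each iterate of Bourgain's method.
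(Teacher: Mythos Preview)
The statement is not proved in the paper at all: Theorem~A is a literature-survey table collecting results from the cited references, placed in the ``Known results and gap'' subsection to contextualize the paper's own contributions (Theorems~\ref{betterregularityNLS}--\ref{mr} and Corollary~\ref{globalprnls}). There is no proof in the paper to compare against. You correctly recognized this at the outset, and your sketch of the underlying mechanisms --- algebra property for $q=1$ or $s>1/q'$, block-wise Strichartz/$X^{s,b}$ methods for the $s=0$ endpoint rows, mass conservation plus embedding for $1\le q\le p\le 2$, Bourgain high--low decomposition for $2<p<7/3$, and energy coercivity for the defocusing high-regularity rows --- is a fair high-level summary of what the cited works actually do, though of course each row hides substantial technical content that a two-line description cannot capture.
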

\begin{Remark}[gap and improvement]\label{gai}\
\begin{enumerate}
    \item  For $p<2,$ by Theorem \ref{TH} (\cite{guo20171d,Klaus}), we have  LWP for cubic \eqref{NLS} in $M^{p,q}$ for  $1<p<2, q<\frac{p}{2-p}$. In particular, this gives LWP in $M^{p,p'}$ for $3/2<p\leq2$. To the best of the authors' knowledge,  LWP  in $M^{p,p'}$ for  $p\in [1,3/2]$ is open so far. Theorem \ref{betterregularityNLS} fills this gap up to 4/3, i.e. we could establish LWP in $M^{p,p'}$ for $4/3<p\leq 2$. Moreover, Corollary \ref{globalprnls} extend this  LWP result  globally in $M^{p,p'}$ for $9/5<p\leq 2$. See Figure \ref{hf}.
    \item  The cubic \eqref{NLS} is shown \cite{guo20171d} to be LWP in $M^{2,q}$ with $2 \leq q <\infty$  by establishing trilinear estimates of the cubic nonlinearity $|u|^{2}u$ in the Bourgain space adapted to $M^{2,q}$. Later, invoking  multi-linear interpolation theory among appropriate  modulation spaces, Klaus \cite{Klaus} successfully established LWP of cubic \eqref{NLS} in $M^{p,q}$ for $1<p<2$ and $q<\frac{p}{2-p}$.  Their analysis is confined to the cubic \eqref{NLS} only, while Theorem \ref{betterregularityNLS}  covers \eqref{NLS} with the wider subcritical regime   $\alpha \in (1,5)$.
\item \label{giaRM} Chaichenets et al. \cite{LeonidIn,leonidthesis} studied \eqref{NLS} in $M^{p,p'}$ for $p>2$. Although the solutions obtained in their case take values in different modulation spaces, and have a \textit{loss of regularity}. It can be interpreted as follows: for data   $u_{0} \in M^{p,p'}, (p>2)$,  
the solution $u(\cdot,t)$  lies in larger modulation spaces $$(L^2+ M^{p_{0},p_{0}'})   \left(\supset M^{p, p'}\right) \quad \text{for some}\quad p_{0}\geq p.$$ 
Building on their results, we consider studying \eqref{NLS} in $M^{p,p'}$-spaces for $p < 2$ and obtain solutions in the same modulation spaces (and thus obtain well-posedness in the sense of  Hadamard). Refer to Corollary  \ref{globalprnls} below.
\item Our main concern in this paper is to extend the known results on $M^{p,q}$ with $1/p+1/q=1$ and to cover a wider range of $p$ while improving the regularity. It is well highlighted in Figure \ref{hf} by red lines.
\end{enumerate}    
\end{Remark}

\begin{figure}[htbp]
\centering
\begin{minipage}[b]{0.45\textwidth}
\centering
\begin{tikzpicture}
\begin{axis}[
    axis lines=middle,
    xmin=0, xmax=1.20,
    ymin=0, ymax=1.20,
    xlabel={$1/p$},
    ylabel={$1/q$},
    xtick={0,1/4,1/2,2/3,3/4,1},
    xticklabels={$0$,$\tfrac14$,$\tfrac12$,$\tfrac23$,$\tfrac34$,$1$},
    ytick={0,1/2,1},
    yticklabels={$0$,$\tfrac12$,$1$},
    width=7cm,
    height=7cm,
]
\addplot[
    domain=0:1,
    samples=200,
    dotted,
    thick
] {1 - x};

\addplot[
    thick,
    blue
] coordinates {(1/4,3/4) (1/2,1/2)};
\node[blue] at (axis cs:3/8,5/8) {};
\addplot[
    thick,
    green!70!black
] coordinates {(1/2,1/2) (2/3,1/3)};
\node[green!70!black] at (axis cs:7/12,5/12) {};
\addplot[
    thick,
    red
] coordinates {(2/3,1/3) (3/4,1/4)};
\node[red] at (axis cs:17/24,7/24) {};
\addplot[
    only marks,
    mark=*,
    mark size=2.5pt,
    black
] coordinates {(1/2,1/2)};
\addplot[
    only marks,
    mark=*,
    mark size=2.5pt,
    red
] coordinates {(2/3,1/3)};
\end{axis}
\end{tikzpicture}
\end{minipage}
\hfill
\begin{minipage}[b]{0.45\textwidth}
\centering
\begin{tikzpicture}
\begin{axis}[
    axis lines=middle,
    xmin=0, xmax=1.20,
    ymin=0, ymax=1.20,
    xlabel={$1/p$},
    ylabel={$1/q$},
    xtick={0,3/7,1/2,5/9,1},
    xticklabels={$0$,$\tfrac{3}{7}$,$\tfrac{1}{2}$,$\tfrac{5}{9}$,$1$},
    ytick={0,1/2,1},
    yticklabels={$0$,$\tfrac{1}{2}$,$1$},
    width=7cm,
    height=7cm,
    tick label style={font=\small},
]
\addplot[
    dotted,
    thick,
    domain=0:1,
    samples=200
] {1 - x};
\addplot[
    blue,
    very thick
] coordinates {
    (3/7,4/7)
    (1/2,1/2)
};
\node[blue, above left] at (axis cs:0.43,0.57) {};
\addplot[
    red,
    very thick
] coordinates {
    (1/2,1/2)
    (5/9,4/9)
};
\node[red, below right] at (axis cs:0.54,0.46) {};
\addplot[
    mark=*,
    black
] coordinates {(1/2,1/2)};
\end{axis}
\end{tikzpicture}
\end{minipage}
\caption{Left: local well-posedness of cubic \eqref{NLS} in $M^{p,p'}$. Right: global well-posedness of cubic \eqref{NLS} in $M^{p,p'}$. Blue and green lines denote the results of \cite{LeonidIn,leonidthesis} and \cite{Klaus}, respectively. The red line denotes the results obtained in Theorem \ref{betterregularityNLS} (left) and Corollary \ref{globalprnls} (right), respectively.}
\label{hf}
\end{figure}

\begin{theoremA}[well-posedness of \eqref{NLSH}]\label{ResultsHartree} Let $0<\nu<1$. Then, we have the following well-posedness for the Hartree equation \eqref{NLSH}:
    \begin{enumerate}
   \item \cite{Carles} GWP in $L^2$ and $L^2\cap \widehat{L}^{\infty}$ for  $0<\nu< 1/2$ where $\widehat{L}^{\infty}=\{f \in \mathcal{S'}: \hat{f}\in L^1\}.$ 
   \item \cite{HyakunaIUMJ} (twisted) GWP in $L^{p}$ for $4/3<p \leq 2$ and sufficiently close to 2.
\item \label{HartreeMpq}\cite{bhimani2016cauchy, DGBJDE, manna2017modulation, BhimaniHartree-Fock}  GWP in $M^{p,q}$ with $1 \leq p \leq 2$ and $ 1 \leq q \leq 
\frac{2}{1+\nu}.$ 
\item \label{RM1}\cite{RameshManna} GWP in $M^{p,p'}$ with $2<p$ and sufficiently close to 2.
\item \label{DBF} \cite{DGBJMAA'26} GWP in $M^{p,q} \cap L^2$ with $p,q \in [1,\infty]$ and $0<\nu <1$.
\item \cite{DiracHartree} LWP for Dirac equations; while \cite{BhimaniHartree-Fock} treated \eqref{NLSH} in the presence  of harmonic potential. 
\end{enumerate}
\end{theoremA}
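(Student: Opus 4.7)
This statement is a literature compendium drawn from several independent papers, so there is no single unified proof to propose; each item follows from the tailored argument of its cited reference. The common strategy, which one can reconstruct from first principles, is a Banach fixed point applied to the Duhamel map
\[
\Phi(u)(t) := U(t) u_0 \mp i \int_0^t U(t-s)\, \mathcal{H}_\nu(u)(s)\, ds, \qquad \mathcal{H}_\nu(u) := \bigl(|\cdot|^{-\nu} \ast |u|^{2}\bigr)\, u,
\]
where $U(t) = e^{it\partial_x^2}$, followed by globalization via a suitable conservation or persistence argument. The item-specific analytic input is always a Lipschitz bound on $\mathcal{H}_\nu$ in the ambient function space.

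For item (1), I would rely on Strichartz estimates for $U(t)$ together with the Hardy--Littlewood--Sobolev inequality to absorb the convolution by $|x|^{-\nu}$; the threshold $\nu < 1/2$ is exactly what keeps the resulting product inside an $L^{2}$-admissible Strichartz pair on the line, and mass conservation \eqref{mass} then globalizes. For (2), since $U(t)$ is not $L^p$-bounded when $p \ne 2$, the twisted-variable approach $v = U(-t)u$ of Zhou and Hyakuna is forced upon us, and one closes a contraction for the map $v \mapsto U(-\cdot)\mathcal{H}_\nu(U(\cdot)v)$ in $L^p$, with the range $4/3 < p < 2$ dictated by the interplay between $\nu$ and the $L^{p} \to L^{p'}$ dispersive decay of $U(t)$. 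For items (3)--(5), the key simplification is that $e^{it\partial_x^2}$ is uniformly bounded on every modulation space by \eqref{Mpp1}, so no twisting is needed: the fixed point in $M^{p,q}$ (respectively $M^{p,q}\cap L^{2}$) reduces to a trilinear modulation-space estimate for $\mathcal{H}_\nu$, obtained by decomposing each factor with the frequency-uniform operators $\square_k$ and bounding the resulting sum via Young's convolution and Hausdorff--Young inequalities. The cap $q \leq 2/(1+\nu)$ in (3) is precisely the threshold at which the symbol of $|x|^{-\nu}$ can be absorbed at the $\ell^{q}_{k}$ level.

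The main obstacle in each modulation-space item is the trilinear estimate for $\mathcal{H}_\nu$: the kernel $|x|^{-\nu}$ is long range, so the frequency-box decomposition does not localize the nonlinearity cleanly, and one must sum delicately over configurations $(k_1,k_2,k_3)$ with $k \sim \pm k_1 \pm k_2 \pm k_3$. The second hurdle is globalization without a native conservation law on $M^{p,q}$ (items (3)--(4)), which is precisely why (5) imposes the additional hypothesis $u_0 \in L^{2}$: mass conservation is inherited from the $L^{2}$ component and the modulation-space norm is then propagated by iteration. Item (6) concerns a different system (Dirac) and a harmonic-potential variant; I would treat both via the same Duhamel-plus-multilinear-estimate template, with $U(t)$ replaced by the corresponding free propagator and the contraction performed in a space adapted to the modified dispersion.
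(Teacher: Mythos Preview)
Your identification is correct: Theorem \ref{ResultsHartree} is a survey of known results from the cited references, and the paper offers no proof of it whatsoever---it is stated purely as background in the ``Known results and gap'' subsection, with each item attributed to its source. Your high-level sketch of the mechanisms behind each item (Strichartz plus Hardy--Littlewood--Sobolev for (1), the twisted variable for (2), the $M^{p,q}$-boundedness of $U(t)$ and frequency-uniform trilinear estimates for (3)--(5)) is a reasonable gloss on the cited literature, but it goes beyond what the paper itself does, which is simply to cite and move on.
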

For the ill-posedness of \eqref{NLS} and \eqref{NLSH} with negative regularity in modulation spaces, see \cite{BhimaniNorm, BhimaniJFA2}. For \eqref{NLS} in the presence of a quadratic potential, see \cite{FabioQuo, IvanBook}.
 It would be impossible to mention all the results that have contributed to the success of modulation spaces in the study of dispersive PDEs.  We refer to monographs \cite{KassoBook, wang2011harmonic, IvanBook} for a thorough introduction and the survey article by Ruzansky-Sugimoto-Wang \cite{RuzSurvey}.
\subsection{Statement of the main results}\label{mainresultssection} 
To achieve our main results, we construct a solution to \eqref{NLS} in generalised Strichartz spaces $L^{Q_{p_{0}}(r)}L^{r}$ using ``generalised Strichartz estimates in Fourier-Lebesgue spaces,'' see Subsection \ref{novelty3} below. Its origins go back to the work of Fefferman \cite{CFeffermanAM}; and later, further developed and used successfully by  Gr\"unrock \cite{grunrock2004improved} and  Hyakuna \cite{hyakuna2012existence, HyakunaJFA}. We begin with the following definition.
\begin{Definition}(Generalized Strichartz pair \cite{hyakuna2012existence, CFeffermanAM,grunrock2004improved}). \label{defn2}Let $1\leq p_{0} \leq 2.$
 Consider a pair of exponents $(Q_{p_{0}}(r),r)$ satisfying 
\begin{align*}
&\frac{2}{Q_{p_{0}}(r)} + \frac{1}{r} = \frac{1}{p_{0}} \\
 \hspace{-1cm} \text{either}\quad 
& 0 < \frac{1}{Q_{p_{0}}(r)} < \min \left( \frac{1}{2} - \frac{1}{r},\; \frac{1}{4} \right) \quad  \text{and}\quad
  0 < \frac{1}{r} < \frac{1}{2}\\
& \hspace{-1cm} \text{or}\quad 
(Q_{p_{0}}(r), r)= (4, r) \quad  \text{and}\quad r > 4.
\end{align*}
The set of all such pairs is denoted by 
$\widehat{\mathcal{X}}(p_{0}).$ 
\end{Definition}
\begin{Remark}\label{chipAp}
    Taking  $p_0=2$ in Definition \ref{defn2},  we recover classical admissible pairs  $\mathcal{A}=\widehat{\mathcal{X}}(2)$, see Definition \ref{defn1}. While for $4/3<p_0<2$ and $2<r$\footnote{Note that $r>2$ in all of the three cases considered in \eqref{rvalues}, and \eqref{rnu}.}, we have $Q_{p_{0}}(r)<q(r)=Q_{2}(r)$.  Thus, $\mathcal{A}\subset \widehat{\mathcal{X}}(p_{0})$. 
\end{Remark}
The possible values of $1/p_{0}$ and $1/r$ qualifying for $(Q_{p_{0}}(r), r))\in \widehat{\mathcal{X}}(p_{0})$ are illustrated by Figure \ref{fig2:myplot}:
\begin{figure}[htbp]
\centering
\begin{tikzpicture}[scale=6.7]
\draw[->] (0,0) -- (1,0) node[right] {$\frac{1}{p_{0}}$};
\draw[->] (0,0) -- (0,0.7) node[above] {$\frac{1}{r}$};

\foreach \x/\label in {0/0, 0.5/{\frac{1}{2}}, 0.75/{\frac{3}{4}}}
    \draw (\x,0) -- (\x,-0.02) node[below] {$\label$};
\foreach \y/\label in {0.25/{\frac{1}{4}}, 0.5/{\frac{1}{2}}}
    \draw (0,\y) -- (-0.02,\y) node[left] {$\label$};
\fill[cyan!40]
(0.5,0.25) --
(0.75,0.25) --
(0.5,0.5) -- cycle;

\node at (0.583,0.333) {$R_1$};
\fill[green!40]
(0.5,0) --
(0.5,0.25) --
(0.75,0.25) -- cycle;
\node at (0.583,0.167) {$R_2$};
\draw[dotted, thick] (0.5,0.5) -- (0.75,0.25);
\draw (0.5,0.25) -- (0.75,0.25);
\draw (0.5,0) -- (0.5,0.5);
\draw[red, thick] (0.5,0) -- (0.75,0.25);
\draw[fill=white] (0.5,0.5) circle (0.012);
\draw[fill=white] (0.75,0.25) circle (0.012);
\fill (0.5,0) circle (0.012);
\fill (0.5,0.25) circle (0.012);
\end{tikzpicture}
 \caption{The cyan region $R_{1}$ is defined as the area  bounded by $1/4\leq 1/r< 1/2,\; 1/2 \leq 1/p_{0} < 3/4 \;  \text{and} \; 1/{p_{0}}+1/r<1.$ The green region $R_{2}$ represents the area bounded by $0 < 1/r \leq 1/4,\; 1/2 \leq 1/{p_{0}}< 3/4 \; \text{and}\; 1/{p_{0}}-1/r< 1/2$, while the red line denotes $0 \leq 1/r < 1/4,\;1/{p_{0}}-1/r= 1/2$. }
 \label{fig2:myplot}
\end{figure}

\subsubsection{\textbf{Nonlinear Schr\"odinger equations}}
\par{ For \eqref{NLS}, we divide our study into three cases for different ranges of $\alpha$, which, in turn, decide the value of $r-$the generalised Strichartz space exponents $L^{Q_p(r)}L^r$.
Specifically, for $\alpha \in (1,5),$ we define $r=r(\alpha)$ as}
 \begin{equation}\label{rvalues}r:= 
\begin{cases}
 \alpha+1 &\text{if} \quad \alpha \in  (1,3] \\ 
 4 \quad  & \text{if} \quad \alpha\in (3,11/3]\\ 
 \frac{12}{11}\alpha \quad  & \text{if} \quad \alpha \in (11/3,5).\\ 
\end{cases}
\end{equation}
 The rationale for selecting $r$  will be given in  Remark \ref{whyrso}.
We are now ready to state the first main result for \eqref{NLS}.
 \begin{thm}\label{betterregularityNLS}
  Let $1<\alpha<5$ and $r$ be  defined as \eqref{rvalues}. Assume that  $u_{0}\in M^{p,p'}(\mathbb R)$ with
 $p$ such that
 \begin{gather}\label{ppor}2\geq p>
    \begin{cases}
     \frac{\alpha+1}{\alpha} \quad &\text{if}\quad \alpha \in (1,3]\\
     \frac{4}{6-\alpha}\quad &\text{if}\quad \alpha \in (3,11/3]\\
     \frac{12}{7}\quad &\text{if}\quad \alpha \in (11/3, 31/7]\\
    \frac{\alpha-1}{2}\quad &\text{if}\quad \alpha \in (31/7,5).
    \end{cases}
    \end{gather}
    Then, for $ (Q_{p}(r),r)\in \widehat{\mathcal{X}}(p),$ there exists $T^*=T^*(\|u_{0}\|_{M^{p,p'}},\alpha)>0$ and a unique  solution $u$ of \eqref{NLS} such that 
     \begin{equation*}
    u \in  C([0,T^{*}),M^{p,p'}) \cap L^{Q_{p}(r)}([0,T^{*}),L^{r}) .
     \end{equation*} 
     Moreover, 
     \begin{enumerate}
     \item  (Blow-up alternative)  Either $T^{*} = +\infty$ or $\|u(\cdot,t)\|_{M^{p,p'}} 
     \to \infty$ as $t \to T^{*}$ for $T^*<\infty$.
     \item  (Lipschitz continuity)
The mapping $u_0\mapsto u(t)$ is locally Lipschitz from $M^{p,p'}$ to $C([0,T^{'}],M^{p,p'}) \cap L^{Q_{p}(r)}([0,T^{'}],L^{r})$ for $T'<T^{*}.$
     \end{enumerate}
 \end{thm}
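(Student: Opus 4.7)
The plan is a standard contraction-mapping argument on the Duhamel map
\[
\Phi(u)(t) \;=\; U(t)u_0 \;\pm\; i\int_0^t U(t-s)\bigl(|u|^{\alpha-1}u\bigr)(s)\,ds,
\]
with $U(t)=e^{it\partial_x^2}$, carried out in the complete metric space
\[
X_T \;:=\; C\bigl([0,T],\,M^{p,p'}\bigr)\;\cap\;L^{Q_p(r)}\bigl([0,T],\,L^r\bigr),
\]
normed by the sum of the two component norms. The goal is to find a fixed point of $\Phi$ in a closed ball $B_M(0)\subset X_{T^*}$ of radius $M\approx \|u_0\|_{M^{p,p'}}$, with $T^*$ tuned at the end.

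For the linear piece, two estimates are required. In the modulation component, \eqref{Mpp1} directly gives $\|U(t)u_0\|_{M^{p,p'}}\leq (1+T^2)^{\frac{1}{2}|1/p-1/2|}\|u_0\|_{M^{p,p'}}$ on $[0,T]$. In the Strichartz component I would invoke the generalised Strichartz estimate for Fourier-Lebesgue data promised in Subsection \ref{novelty3}: combined with the embedding $M^{p,p'}\hookrightarrow \widehat{L}^{\,p}$ (valid for $1\leq p\leq 2$), this yields
\[
\|U(\cdot)u_0\|_{L^{Q_p(r)}_T L^r} \;\lesssim\; \|u_0\|_{\widehat{L}^{\,p}} \;\lesssim\; \|u_0\|_{M^{p,p'}}
\]
for every pair $(Q_p(r),r)\in \widehat{\mathcal{X}}(p)$.

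For the nonlinear term $F=|u|^{\alpha-1}u$ I would run two parallel estimates. The $L^\infty_T M^{p,p'}$ bound follows from Minkowski in time, the uniform boundedness of $U(t)$ on $M^{p,p'}$, and the embedding $L^p\hookrightarrow M^{p,p'}$ from \eqref{dsembeddings}, reducing matters to controlling $\int_0^T \|F(s)\|_{L^p}\,ds$. The $L^{Q_p(r)}_T L^r$ bound uses the retarded/dual version of the generalised Strichartz estimate, controlling the Duhamel integral by a dual mixed Lebesgue norm of $F$. Both are then closed via the pointwise identity $\||u|^{\alpha-1}u\|_{L^{\tilde b}}=\|u\|_{L^{\tilde b \alpha}}^\alpha$ together with H\"older in time, distributing the $\alpha$ copies of $u$ between $L^{Q_p(r)}_T L^r$ and $L^\infty_T M^{p,p'}$ (accessed through an embedding $M^{p,p'}\hookrightarrow L^\rho$ for an auxiliary $\rho$). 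The four subranges of $\alpha$ in \eqref{rvalues} are precisely those for which such a H\"older partition closes: for $\alpha\in(1,3]$ all $\alpha$ factors sit inside the Strichartz norm with $r=\alpha+1$; for $\alpha\in(3,11/3]$ the endpoint case $(Q_p(r),r)=(4,r)$ with $r=4$ from Definition \ref{defn2} is forced; and the heavier regimes $\alpha>11/3$ require $r=\tfrac{12}{11}\alpha$ together with an embedding argument, which in turn pins down the piecewise lower bounds on $p$ in \eqref{ppor}.

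Provided each nonlinear estimate carries a positive power $T^\theta$ from the spare time integrability, a standard choice $M=2C\|u_0\|_{M^{p,p'}}$ together with $T^*$ small enough makes $\Phi$ a strict contraction on $B_M(0)$, so the Banach fixed-point theorem produces a unique $u\in X_{T^*}$. The blow-up alternative follows from iterating on maximal intervals, since the local existence time depends only on $\|u(T)\|_{M^{p,p'}}$; Lipschitz continuity of the data-to-solution map follows by re-running the same estimates on the difference of two fixed-point iterates. The principal obstacle is the nonlinear bookkeeping: each of the four regimes of $\alpha$ needs its own optimal Strichartz pair and its own way of splitting the $\alpha$ factors between the Strichartz norm and the persistence norm, and it is exactly the compatibility of Definition \ref{defn2}, the H\"older relations, and the embedding $M^{p,p'}\hookrightarrow L^\rho$ that forces the piecewise lower bounds on $p$ recorded in \eqref{ppor}.
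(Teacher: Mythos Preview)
Your overall scheme (contraction in $X_T = C_T M^{p,p'}\cap L^{Q_p(r)}_T L^r$, linear bounds via \eqref{Mpp1} and the generalised Strichartz estimate for $\widehat{L}^p$ data) matches the paper. The genuine gap is in your treatment of the Duhamel term in the $L^\infty_T M^{p,p'}$ component. You propose Minkowski plus boundedness of $U(t)$ on $M^{p,p'}$ together with $L^p\hookrightarrow M^{p,p'}$, reducing to $\int_0^T\|F(s)\|_{L^p}\,ds$, and then closing by splitting the $\alpha$ factors of $u$ between the Strichartz norm and an embedding $M^{p,p'}\hookrightarrow L^\rho$. This fails on two counts. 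First, for $p<2$ there is no useful embedding $M^{p,p'}\hookrightarrow L^\rho$ with $\rho$ large enough to help (the available embedding goes the wrong way, $L^p\hookrightarrow M^{p,p'}$). Second, even ignoring that, the bookkeeping does not close: in the range $\alpha\in(1,3]$ with $r=\alpha+1$ you would need $p\alpha\leq r$, i.e.\ $p\leq(\alpha+1)/\alpha$, which is exactly the \emph{opposite} of the hypothesis $p>(\alpha+1)/\alpha$.

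The paper avoids this by a sharper device that you have missed entirely: the weighted Strichartz estimate \eqref{weightest}. After writing $U(t-s)=U(t)U(-s)$ and using \eqref{Mpp1} and $L^p\hookrightarrow M^{p,p'}$, one is left with $\sup_t\big\|\int_0^t U(-s)F(s)\,ds\big\|_{L^p}$, and \eqref{weightest} bounds this by $\|s^{1/p-1/2}F(s)\|_{L^\sigma_T L^\rho}$ for any $(\sigma',\rho')\in\widehat{\mathcal{X}}(p)$. Choosing $\rho=r/\alpha$ and the corresponding $\sigma$ then closes the estimate purely in terms of $\|u\|_{L^{Q_p(r)}_T L^r}^\alpha$ with a positive power of $T$, and the requirement $(\sigma',(r/\alpha)')\in\widehat{\mathcal{X}}(p)$ is precisely what generates the piecewise lower bounds on $p$ in \eqref{ppor}. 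A secondary point: for the $L^{Q_p(r)}_T L^r$ bound on the Duhamel term the paper uses Kato's generalised \emph{inhomogeneous} Strichartz estimate (Proposition~\ref{gse2}) with a quadruple $(Q_p(r),r,\sigma,r/\alpha)$, not a dual of the Fourier--Lebesgue estimate; the input and output pairs here are genuinely non-admissible, so a simple $TT^*$ dualisation would not suffice.
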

\begin{Remark}
The justification for the lower bound of $p$ in Theorem \ref{betterregularityNLS} is provided in Remark \ref{why1}. For the comments on proof, we refer to  Remark \ref{ps1} below. 
\end{Remark}
\par{With the local solution provided by Theorem \ref{betterregularityNLS}, our next goal is to extend the solution globally with the desired persistence of regularity (see Corollary \ref{globalprnls}).
In light of the blow-up alternative in Theorem \ref{betterregularityNLS}, establishing the global existence reduces to proving that 
\begin{equation}\label{finiteblowup}
    \sup_{0 \leq t < T^*} \|u(t)\|_{M^{p,p'}} < \infty.
\end{equation}
  The main challenge in establishing global well-posedness in modulation spaces is the lack of any useful conservation law in the $M^{p,q} $- norm, cf. \eqref{mass}. To handle this,  we shall first establish Theorem \ref{mr}, although this may be of independent interest. This gives us a global solution (Theorem \ref{mr} \eqref{gwp}) by employing the data-decomposition method for a certain range of $p$. Vargas and Vega introduced this method \cite{vargas2001global} to handle the infinite-mass case. It was initially inspired by Bourgain high-low decomposition  \cite{Bourgain1999} \footnote{This circle of ideas is known as Bourgain's high-low frequency decomposition method in dispersive PDEs, see \cite{Bourgain1999}, \cite[Section 3.9]{TaoBook} \cite[Section 3.2]{KenigonBourgain}.} to extend the global existence result for \eqref{NLS} from $H^1$ to $H^{s}$ for $s>3/5$. Since then, many authors have successfully adapted this method to study \eqref{NLS} and \eqref{NLSH} in non-$L^2$-based spaces; see, for example, \cite{hyakuna2012existence, Grunrock, grunrock2004improved, HyakunaJFA, LeonidIn, HyakunaIUMJ}. }

Returning to Theorem \ref{mr} \eqref{gwp},  we point out that there is a loss of regularity in the global solution. See Remark \ref{remarksnls} \eqref{2solnspace}. Now, in order to obtain the desired persistence of regularity, we extend the local solution in Theorem \ref{betterregularityNLS} globally
with the help of Theorem \ref{mr} \eqref{gwp}. To this end, we begin by noticing that local solutions obtained in  Theorems \ref{betterregularityNLS} and  \ref{mr} are equal almost everywhere. See Proposition \ref{uniqueNLS}. Next, we could control the growth of $M^{p,p'}-$norm (i.e. \eqref{finiteblowup}), by employing  weighted Strichartz estimates in $L^{p} $ spaces \eqref{weightest} and  \eqref{Mpp1}. Indeed, under certain restrictions on $\alpha$ and $p,$ a priori we can bound it by the solution space of Theorem \ref{mr} \eqref{gwp}. See \eqref{essential} and \eqref{connetions}.  This eventually gives us  \eqref{finiteblowup} (and so does Corollary \ref{globalprnls}).

\par{We are now ready to present Theorem \ref{mr} and Corollary \ref{globalprnls} sequentially.}
\begin{thm}\label{mr} Let $1<\alpha<5$ and $r$ be as in \eqref{rvalues}. Assume that  $u_{0}\in M^{p,p'}(\mathbb R)$ with
 $p\in (p_{0},2]$ for some $p_{0}$ satisfying 
\begin{gather}\label{pminloc}2>p_{0}>
    \begin{cases}
     \frac{\alpha+1}{\alpha} \quad&\text{if}\quad \alpha \in (1,3)\\
     \frac{4}{3}\quad&\text{if}\quad \alpha \in [3,10/3]\\
    \frac{2\alpha}{5}\quad&\text{if}\quad \alpha \in (10/3,5).
    \end{cases}
    \end{gather}
 \begin{enumerate}
     \item \label{lwp}
Then, for $ (Q_{p_{0}}(r),r)\in \widehat{\mathcal{X}}(p_{0}),$
 there exists $N_{0} > 1$ such that for any $N > N_{0},$ $\eqref{NLS}$  has a unique local solution 
    \begin{align*}
    u \in \left(L^{\infty}_{T_{N}} L^2 \cap L^{Q_{p_{0}}(r)}_{T_{N}} L^{r}\right) +\left(L^{\infty}_{T_{N}} M^{p_{0},p_{0}'} \cap L^{Q_{p_{0}}(r)}_{T_{N}} L^{r}\right) 
     \end{align*} 
     where $T_{N}$ is represented as
     \begin{equation}\label{TNmain}
T_{N}:=CN^{1+\frac{2}{5-\alpha}\left(4-2\alpha-\frac{\alpha-1}{p_{0}}\right)\gamma} \quad \text{and}\quad \gamma=\frac{\frac{1}{p}-\frac{1}{2}}{\frac{1}{p_{0}}-\frac{1}{p}}.
     \end{equation} 
 \item \label{gwp}
Assume further that $p$ satisfies
\begin{gather}\label{pminglobal}2\geq p>p_{\min}=
    \begin{cases}
     \frac{\alpha+1}{\alpha} \quad&\text{if}\quad \alpha \in (1,\frac{3+\sqrt{57}}{6})\\
     \frac{5\alpha+3}{2(\alpha+2)} \quad&\text{if}\quad \alpha \in [\frac{3+\sqrt{57}}{6},3)\\
     \frac{9(\alpha-1)}{2(2\alpha-1)}\quad&\text{if}\quad \alpha \in [3,\frac{10}{3}]\\
    \frac{(\alpha-1)(3\alpha+5)}{2(\alpha^2-2\alpha+5)}\quad&\text{if}\quad \alpha \in (\frac{10}{3},5).
    \end{cases}
    \end{gather}  Then, the local solution of  \eqref{NLS} 
 constructed in \eqref{lwphar} extends globally and 
\begin{equation*}
  u \in  \left(L^{\infty}_{loc}(\R,L^2)\cap L^{Q_{p_{0}}(r)}_{loc}(\R,L^{r})\right) +\left(L^{\infty}_{loc}(\R,M^{p_{0},p_{0}'})\cap L^{Q_{p_{0}}(r)}_{loc}(\R,L^{r}) \right).
\end{equation*}
\end{enumerate}
\end{thm}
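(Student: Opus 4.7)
The plan is to implement the BHLD method in the modulation space setting: split the initial datum into an $L^2$ smooth piece and an $M^{p_0, p_0'}$ rough remainder, then exploit $L^2$ mass conservation on the smooth component while controlling the rough part via the generalized Strichartz estimates on $\widehat{\mathcal X}(p_0)$.

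\emph{Local existence (part (1)).} Given $u_0 \in M^{p, p'}$ with $p_0 < p$, I would split $u_0 = v_0 + w_0$ at a frequency/size threshold parametrized by $N$, tuning the cut so that the interpolation identity $M^{p, p'} = [M^{p_0, p_0'}, L^2]_{\theta}$ with $(1-\theta)/\theta = \gamma$ produces
$$\|v_0\|_{L^2} \lesssim N^\gamma \|u_0\|_{M^{p, p'}}, \qquad \|w_0\|_{M^{p_0, p_0'}} \lesssim \|u_0\|_{M^{p, p'}}.$$
I would then solve \eqref{NLS} with initial data $v_0$ using Tsutsumi's $L^2$ theory \cite{YTsutsumi} to obtain $v \in C(\mathbb R, L^2) \cap L^{Q_{p_0}(r)}_{\mathrm{loc}}(\mathbb R, L^r)$ (the Strichartz exponent being admissible because $\mathcal A \subset \widehat{\mathcal X}(p_0)$, cf.~Remark~\ref{chipAp}), with mass conservation $\|v(t)\|_{L^2}=\|v_0\|_{L^2}$. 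The full solution is then sought in the form $u = v + w$, where $w$ satisfies the Duhamel equation
$$w(t) = U(t) w_0 \pm i \int_0^t U(t-s)\bigl[|v+w|^{\alpha-1}(v+w) - |v|^{\alpha-1} v\bigr](s)\, ds, \quad U(t) = e^{it\partial_x^2}.$$
Setting up a contraction for $w$ in the sum space
$$X_{T_N} = \bigl(L^{\infty}_{T_N} L^2 \cap L^{Q_{p_0}(r)}_{T_N} L^r\bigr) + \bigl(L^{\infty}_{T_N} M^{p_0, p_0'} \cap L^{Q_{p_0}(r)}_{T_N} L^r\bigr),$$
the estimates would close using (a) the generalized Strichartz estimates for $\widehat{\mathcal X}(p_0)$, (b) the propagator bound \eqref{Mpp1} on $M^{p_0, p_0'}$, and (c) H\"older-type multilinear estimates on the difference nonlinearity, split into the three regimes of $r$ dictated by \eqref{rvalues}. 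The lower bound \eqref{pminloc} on $p_0$ is exactly what is needed so that those multilinear estimates close for the full mass-subcritical range $\alpha \in (1,5)$; balancing the $L^2$ size $N^\gamma \|u_0\|_{M^{p,p'}}$ of $v_0$ against the Strichartz time factors then produces the explicit lifespan \eqref{TNmain}.

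\emph{Global extension (part (2)).} Starting from the local solution on $[0, T_N]$, I would iterate: at each step re-decompose $u$ at the current time into a new smooth and rough split, reabsorb the new rough part into $w$, and run the local theory on the next interval. Mass conservation keeps the $L^2$ norm of the smooth component from blowing up on any single step, but each reset contributes additional $L^2$ mass from the rough remainder. Tracking this compounded growth across the $\sim T/T_N$ iterations needed to reach a fixed time $T$, and optimizing $N$ at each step so the exponent in \eqref{TNmain} dominates the accumulated norm inflation, yields the refined range \eqref{pminglobal}. The transition values $\alpha = \tfrac{3+\sqrt{57}}{6}$ and $\alpha = \tfrac{10}{3}$ arise from switching between the three regimes of $r$ in \eqref{rvalues} (and hence of $Q_{p_0}(r)$) together with the corresponding change in which multilinear estimate is binding.

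\emph{Main obstacle.} The hardest step is the iteration in part (2): one must simultaneously control the accumulated $L^2$ mass of the smooth component and the $M^{p_0, p_0'}$ norm of the rough remainder uniformly on compact time intervals, while preventing $T_N$ from collapsing to zero. The presence of four subcases in \eqref{pminglobal} signals that no single uniform argument applies; separate bookkeeping will be required in each regime, and any slack in the interpolation exponent $\gamma$ or in the Strichartz exponent $Q_{p_0}(r)$ translates directly into a loss in the admissible $p$-range.
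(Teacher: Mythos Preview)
Your high-level picture of BHLD is right, but two structural points are off and they matter.

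First, the decomposition bounds are not the ones that make the argument run. You write $\|w_0\|_{M^{p_0,p_0'}}\lesssim\|u_0\|_{M^{p,p'}}$, i.e.\ the rough piece is merely bounded. What the interpolation lemma actually delivers (and what the paper uses) is $\|\psi_0\|_{M^{p_0,p_0'}}\lesssim N^{-1}$ together with $\|\phi_0\|_{L^2}\lesssim N^{\gamma}$. The $N^{-1}$ smallness of the rough data is the engine of the whole scheme: the contraction for the perturbation $w$ is run in a ball of radius $\sim N^{-1}$ in $Y(\delta_N)=L^{Q_{p_0}(r)}_{\delta_N}L^r$, and the self-interaction term $\|w\|_{Y}^{\alpha}$ is negligible precisely because of this. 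With an $O(1)$ rough part you cannot close the fixed point on any interval whose length is governed by $N$.

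Second, the iteration lives inside part (1), not part (2). One does \emph{not} obtain $[0,T_N]$ by a single contraction; the Strichartz norm of the $L^2$ solution $v$ over $[0,T_N]$ is too large for that. Instead one works on short intervals $[k\delta_N,(k+1)\delta_N]$ with $\delta_N\sim N^{-4(\alpha-1)\gamma/(5-\alpha)}$, and at each step the rough piece is \emph{not} re-split: it remains the free evolution $e^{it\partial_x^2}\psi_0$ throughout, while the nonlinear Duhamel correction $w_k-e^{it\partial_x^2}\psi_k$ (which lies in $L^2$ with norm $\lesssim N^{-1+\frac{2(\alpha-1)}{5-\alpha}(\frac{1}{p_0}-\frac12)\gamma}$ by Lemma~\ref{lemlwp2}) is transferred to the $L^2$ side. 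Counting how many steps $K$ are allowed before the accumulated $L^2$ mass doubles yields $T_N=K\delta_N$ as in \eqref{TNmain}. Part (2) is then a one-line observation: if the exponent of $N$ in \eqref{TNmain} is positive, take $N$ large and you reach any prescribed time. The threshold $\alpha=10/3$ in \eqref{pminloc} and \eqref{pminglobal} comes from the constraint $p_0>2\alpha/5$ in Lemma~\ref{lemlwp2} overtaking $p_0>4/3$, not from the switch in $r$ (which occurs at $\alpha=3$ and $\alpha=11/3$).
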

\begin{Remark}\label{remarksnls} Theorem \ref{mr} deserves several comments.
\begin{enumerate}
\item \label{whypsolocally}
The conditions imposed on the lower bound of $p_{0}$ in Theorem \ref{mr} arise from Lemmas \ref{lemlwp} and \ref{lemlwp2}, more precisely, $p_{0}>(\alpha-1)/2$ and $p_{0}>2\alpha /5$ respectively, and the fact that $(Q_{p_{0}}(r),r)\in \widehat{\mathcal{X}}(p_{0})$, see Remark \ref{conditionsonp0}.
\item
For an explanation of the reasoning underlying the conditions imposed on $p$ in \eqref{pminglobal} and the comments on proof, refer to Remark \ref{PS2}.
\item \label{2solnspace}
 The solution space in Theorem \ref{betterregularityNLS} lies in
    \begin{equation*}
    u \in  C_{T} M^{p,p'} \cap L^{Q_{p}(r)}_{T}L^{r}\subset C_{T}M^{p,p'}
     \end{equation*} 
     for some $T>0$ and $u_{0} \in M^{p,p'}$ with $p$ satisfying \eqref{ppor}.
    While, the solution $u$ in Theorem \ref{mr} having initial data in $M^{p,p'}$ with $p$ satisfying \eqref{pminglobal}, lies in
      \begin{align*}
   \left(L^{\infty}_{loc}(\R, L^2)  \cap L^{Q_{p_{0}}(r)}_{loc}(\R, L^{r})\right) +\left(L^{\infty}_{loc} (\R, M^{p_{0},p_{0}'}) \cap L^{Q_{p_{0}}(r)}_{loc} (\R, L^{r})\right) .
     \end{align*}
     Thus, $$u \in L^{\infty}_{loc}(\R, L^2)+ L^{\infty}_{loc}(\R,M^{p_{0},p_{0}'})
     (\supset L^{\infty}_{loc}(\R, M^{p,p'})). $$
      The loss of regularity in Theorem \ref{mr} is due to  data-decomposition method where the initial data $M^{p,p'}$ 
is decomposed as the sum of an arbitrarily large $L^2$ function and an arbitrarily small
$M^{p_{0},p_{0}'}$ function for some $p_{0}<2$, see Lemma \ref{ipt}.
\item \label{compared} Comparing the lower bound on $p$; Theorem \ref{betterregularityNLS}: \eqref{ppor} and Theorem \ref{mr}: \eqref{pminglobal}, it is worth noting that we have a global solution in Theorem \ref{mr} at the cost of a narrower range of $p$ along with loss of regularity.
\end{enumerate}
\end{Remark}

Next, we extend the local solution obtained in Theorem \ref{betterregularityNLS} globally, having persistence of regularity under a certain restrictions on $p$ and $\alpha$. This addresses the limitation highlighted in Remark \ref{remarksnls} \eqref{2solnspace} and \eqref{compared}. Specifically, we have the following result. 

\begin{cor}\label{globalprnls}
  Let $1<\alpha \leq10/3$  and $r$ be as defined in \eqref{rvalues}. Assume that  $u_{0}\in M^{p,p'}(\mathbb R)$ with
 $p$ satisfying \eqref{pminglobal}.
Then, for $ (Q_{p}(r),r)\in \widehat{\mathcal{X}}(p),$ there exists a unique global solution to \eqref{NLS} 
  \begin{equation*}
    u \in C(\R,M^{p,p'}(\mathbb R)) \cap L^{Q_{p}(r)}_{loc}(\R,L^{r}(\R)). 
     \end{equation*} 
\end{cor}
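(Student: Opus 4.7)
The plan is to invoke the blow-up alternative of Theorem \ref{betterregularityNLS} and show that its local $M^{p,p'}$-solution cannot leave the space in finite time. Since the restriction \eqref{pminglobal} is strictly stronger than \eqref{ppor} for every $\alpha\in(1,10/3]$, Theorem \ref{betterregularityNLS} produces a maximal solution $u\in C([0,T^*),M^{p,p'})\cap L^{Q_p(r)}([0,T^*),L^r)$, while Theorem \ref{mr}\eqref{gwp} simultaneously produces a global solution $\tilde u$ living in the sum space $(L^\infty_{loc}(\R,L^2)\cap L^{Q_{p_0}(r)}_{loc}(\R,L^r))+(L^\infty_{loc}(\R,M^{p_0,p_0'})\cap L^{Q_{p_0}(r)}_{loc}(\R,L^r))$ for a suitable $p_0<p$ obeying \eqref{pminloc}. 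By Proposition \ref{uniqueNLS}, $u=\tilde u$ almost everywhere on $[0,T^*)$, so on every $[0,T]\subset[0,T^*)$ the local $M^{p,p'}$-solution inherits the stronger global control provided by Theorem \ref{mr}\eqref{gwp}.

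To verify \eqref{finiteblowup}, I would fix $T<T^*$, apply the $M^{p,p'}$-norm to the Duhamel formula \eqref{iq} and treat the two pieces separately. The linear part is bounded by $(1+T^2)^{\frac{1}{2}|\frac{1}{p}-\frac{1}{2}|}\|u_0\|_{M^{p,p'}}$ thanks to \eqref{Mpp1}. For the inhomogeneous part I would combine the embedding \eqref{dsembeddings} with the weighted Strichartz estimates in $L^p$-spaces (the bound \eqref{weightest} referred to in the paper's narrative), reducing matters to a mixed-norm estimate of $|u|^{\alpha-1}u$ on $[0,T]$. A H\"older decomposition of the nonlinearity---splitting $\tilde u=v_1+v_2$ with $v_1\in L^\infty_T L^2$, $v_2\in L^\infty_T M^{p_0,p_0'}$, and both in $L^{Q_{p_0}(r)}_T L^r$---then expresses this mixed norm as a finite product of the solution-space norms delivered by Theorem \ref{mr}\eqref{gwp}. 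Substituting back and using $u=\tilde u$ yields $\sup_{t\in[0,T]}\|u(t)\|_{M^{p,p'}}<\infty$, so the blow-up alternative forces $T^*=+\infty$; a symmetric argument for $t<0$ gives $u\in C(\R,M^{p,p'})\cap L^{Q_p(r)}_{loc}(\R,L^r)$, with uniqueness inherited from the local uniqueness in Theorem \ref{betterregularityNLS}.

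The main obstacle I anticipate is the exponent bookkeeping in the H\"older/Strichartz step: one must select the weighted Strichartz pair so that \emph{(i)} the estimate \eqref{weightest} meshes with the $L^p\hookrightarrow M^{p,p'}$ embedding, \emph{(ii)} every factor of $\tilde u$ that appears can be absorbed into either $L^{Q_{p_0}(r)}_T L^r$ or $L^\infty_T X$ (with $X=L^2$ or $X=M^{p_0,p_0'}$), and \emph{(iii)} the resulting constraints on $\alpha$ and $p$ are realised by the hypotheses $\alpha\leq 10/3$ and \eqref{pminglobal}. I expect that it is precisely this exponent matching---together with Strichartz admissibility in Definition \ref{defn2} and the choices of $r$ in \eqref{rvalues}---that pins down the threshold $\alpha=10/3$, beyond which the nonlinear term can no longer be closed in the solution space of Theorem \ref{mr}\eqref{gwp}, and that forces the lower bound on $p$ to agree with \eqref{pminglobal}.
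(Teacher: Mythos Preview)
Your overall architecture is exactly the paper's: invoke Theorem \ref{betterregularityNLS} for the local $M^{p,p'}$-solution, identify it with the global object from Theorem \ref{mr}\eqref{gwp} via Proposition \ref{uniqueNLS}, bound the linear piece by \eqref{Mpp1}, and control the Duhamel term through \eqref{dsembeddings} and the weighted estimate \eqref{weightest} to trigger the blow-up alternative.

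The one substantive divergence is your treatment of the nonlinear term. You propose to split $\tilde u=v_1+v_2$ and estimate cross terms of $|u|^{\alpha-1}u$ against a mixture of the norms $L^\infty_T L^2$, $L^\infty_T M^{p_0,p_0'}$ and $L^{Q_{p_0}(r)}_T L^r$. The paper avoids this entirely: since both summands already lie in $L^{Q_{p_0}(r)}_T L^r$, so does $u$, and because $p_0>4/3$ one has the further embedding
\[
L^{Q_{p_0}(r)}_T L^r \subset L^{\frac{8r}{3r-4}}_T L^r =: Y_2(T).
\]
The weighted Strichartz estimate is then applied with the same pair $(b',(r/\alpha)')\in\widehat{\mathcal{X}}(p)$ used in the proof of Theorem \ref{betterregularityNLS}, and a single H\"older step gives
\[
\left\|\int_0^t e^{i(t-\tau)\partial_x^2}|u|^{\alpha-1}u\,d\tau\right\|_{L^\infty_T M^{p,p'}}
\lesssim T^{\,1+\frac{1}{2p}-\frac{3\alpha}{8}}\,\|u\|_{Y_2(T)}^{\alpha}.
\]
The exponent $1+\tfrac{1}{2p}-\tfrac{3\alpha}{8}$ is nonnegative precisely for $p\le 2$ and $\alpha\le 10/3$, which is where the threshold comes from. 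Your splitting route is not wrong in principle, but it multiplies the bookkeeping (you would need to close every cross term $|v_i|^{\alpha-1}v_j$), whereas the paper's single-space reduction makes the admissibility check and the emergence of $\alpha=10/3$ a one-line computation.
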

The reason for the restrictions on $\alpha$ is explained in Remark \ref{why10/3}. Finally, we would like to point out that  Theorems \ref{betterregularityNLS} and \ref{mr} and Corollary \ref{globalprnls} nicely complement several well-posedness results and also shed new light on ill-posedness results. See Subsection \ref{mb} and \ref{krg}  and Remark \ref{gai}.

\subsubsection{\textbf{Hartree equations}}
 \begin{thm}\label{betterregularityH}
 For $0<\nu<1$, let
\begin{equation}\label{rnu}
    r_{\nu} =\frac{4}{2-\nu}.
\end{equation}
 Assume that  $u_{0}\in M^{s,s'}(\mathbb R)$ with
 $s$ satisfying 
 \begin{equation}\label{sbetterregularity}
     \frac{4}{2+\nu}<s\leq 2.
 \end{equation}    
 Then,  for $ (Q_{s}(r_{\nu}),r_{\nu})\in \widehat{\mathcal{X}}(s)$, there exists a $T_{*}=T_{*}(\|u_{0}\|_{M^{s,s'}},\nu)>0$ and a unique maximal local solution to Hartree equation \eqref{NLSH} which lies in 
 \begin{equation*}
    u \in C([0,T_{*}),M^{s,s'}) \cap L^{Q_{s}(r_{\nu})}([0,T_{*}),L^{r_{\nu}}) .
     \end{equation*} 
      Moreover, 
     \begin{enumerate}
     \item  (Blow-up alternative)  Either $T_{*} = +\infty$ or $\|u(\cdot,t)\|_{M^{s,s'}} 
     \to \infty$ as $t \to T_{*}$ for $T_*<\infty$.
     \item  (Lipschitz continuity)
The mapping $u_0\mapsto u(t)$ is locally Lipschitz from $M^{s,s'}$ to $C([0,T^{'}],M^{s,s'}) \cap L^{Q_{p}(r)}([0,T^{'}],L^{r})$ for $T'<T_{*}.$
\end{enumerate}
 \end{thm}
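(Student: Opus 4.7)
The proof mirrors that of Theorem \ref{betterregularityNLS}, replacing the power nonlinearity $|u|^{\alpha-1}u$ with the Hartree nonlinearity $\mathcal{H}_\nu(u) := (|\cdot|^{-\nu}\ast |u|^2)u$ and specialising the exponent to $r = r_\nu = 4/(2-\nu)$. The plan is to run a Banach fixed-point argument on the Duhamel map
\[
\Phi(u)(t) := U(t)u_0 \mp i\int_0^t U(t-\tau)\,\mathcal{H}_\nu(u)(\tau)\,d\tau, \qquad U(t) = e^{it\partial_x^2},
\]
in the complete metric space
\[
X_{T_*} := \Bigl\{u \in C([0,T_*], M^{s,s'}) \cap L^{Q_s(r_\nu)}([0,T_*], L^{r_\nu}) : \|u\|_{X_{T_*}} \leq 2C\|u_0\|_{M^{s,s'}}\Bigr\},
\]
endowed with $\|u\|_{X_{T_*}} := \|u\|_{L^\infty_{T_*}M^{s,s'}} + \|u\|_{L^{Q_s(r_\nu)}_{T_*}L^{r_\nu}}$. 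On the linear side, \eqref{Mpp1} governs $\|U(\cdot)u_0\|_{L^\infty_{T_*}M^{s,s'}}$, while the generalised Strichartz estimate in Fourier--Lebesgue spaces for admissible pairs in $\widehat{\mathcal{X}}(s)$ (Definition \ref{defn2}), together with the standard embedding of $M^{s,s'}$ into the corresponding Fourier--Lebesgue space, controls $\|U(\cdot)u_0\|_{L^{Q_s(r_\nu)}_{T_*}L^{r_\nu}}$.

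The crux is a trilinear estimate for the Hartree nonlinearity. Applying Hardy--Littlewood--Sobolev followed by H\"older pointwise in $t$ gives
\[
\|\mathcal{H}_\nu(u)(t)\|_{L^{r_\nu'}_x} \lesssim \|u(t)\|^3_{L^{r_\nu}_x},
\]
since with $a := 2/\nu$ one has $\bigl\||\cdot|^{-\nu}\ast|u(t)|^2\bigr\|_{L^a_x} \lesssim \|u(t)\|^2_{L^{r_\nu}_x}$ by HLS (using $r_\nu/2 = 2/(2-\nu)$), and $1/r_\nu' = 1/a + 1/r_\nu$ closes the H\"older ledger. The choice $r_\nu = 4/(2-\nu)$ is forced by the symmetry required to put all three $u$-factors in the \emph{same} $L^{r_\nu}$ space. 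Integrating in time, the condition $Q_s(r_\nu) > 4$ (equivalent to $s > 4/(2+\nu)$ strictly) provides H\"older room to produce a positive power $(T_*)^\theta$, yielding
\[
\|\mathcal{H}_\nu(u)\|_{L^{Q_s(r_\nu)'}_{T_*}L^{r_\nu'}} \lesssim (T_*)^\theta \|u\|^3_{L^{Q_s(r_\nu)}_{T_*}L^{r_\nu}}.
\]
Combining with the dual generalised Strichartz estimate bounds the Duhamel term in $L^{Q_s(r_\nu)}_{T_*}L^{r_\nu}$. For its $M^{s,s'}$-norm, one pulls \eqref{Mpp1} inside the time integral and uses the embedding $L^s \hookrightarrow M^{s,s'}$ from \eqref{dsembeddings} (valid since $s \leq 2$), then estimates $\|\mathcal{H}_\nu(u)(t)\|_{L^s_x}$ by another round of HLS + H\"older, again gaining a positive power of $T_*$ by playing off $s$ against the strict lower bound.

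The threshold $s > 4/(2+\nu)$ has a transparent interpretation: the scaling relation $2/Q_s(r_\nu) + 1/r_\nu = 1/s$ together with the admissibility $1/Q_s(r_\nu) < \nu/4$ from Definition \ref{defn2} forces precisely $1/s < (2+\nu)/4$. At the endpoint $s = 4/(2+\nu)$, $Q_s(r_\nu) = 4$, the estimate becomes scale-critical, and the H\"older gain $(T_*)^\theta$ vanishes; any strict improvement restores enough room to close the contraction.

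With these estimates in place, choosing $T_*$ sufficiently small in terms of $\|u_0\|_{M^{s,s'}}$ and $\nu$ makes $\Phi$ a strict contraction on $X_{T_*}$, and Banach's fixed-point theorem yields the unique solution. The blow-up alternative follows from iterating the local theory, since the time of existence depends only on $\|u_0\|_{M^{s,s'}}$. Lipschitz continuity is obtained by running the same trilinear estimates on the difference after expanding
\[
\mathcal{H}_\nu(u) - \mathcal{H}_\nu(v) = \bigl(|\cdot|^{-\nu}\ast(u\overline{u} - v\overline{v})\bigr)u + \bigl(|\cdot|^{-\nu}\ast|v|^2\bigr)(u-v).
\]
I expect the technically delicate step to be the $M^{s,s'}$-estimate of the nonlinearity: since $M^{s,s'}$ is not an algebra in this range, one must route through the $L^s \hookrightarrow M^{s,s'}$ embedding and carefully balance HLS and H\"older indices throughout the open interval $4/(2+\nu) < s \leq 2$ so that a positive power of $T_*$ survives all the way up to (but not at) the endpoint.
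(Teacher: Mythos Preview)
Your overall architecture matches the paper's---contraction in $L^\infty_T M^{s,s'} \cap L^{Q_s(r_\nu)}_T L^{r_\nu}$, linear piece via \eqref{Mpp1} and the Fourier--Lebesgue Strichartz estimate, trilinear structure via HLS and H\"older---but two of the inhomogeneous steps do not go through as you describe them. For the Duhamel bound in $L^{Q_s(r_\nu)}_T L^{r_\nu}$, the ``dual generalised Strichartz estimate'' you invoke (input pair $(Q_s(r_\nu)', r_\nu')$, output pair $(Q_s(r_\nu), r_\nu)$) violates the scaling relation of Proposition~\ref{gse2} whenever $s<2$; the $TT^*$ argument behind such a self-dual form does not close because $\widehat{L}^s$ is not self-dual. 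The paper instead applies Proposition~\ref{gse2} with the quadruple $(Q_s(r_\nu), r_\nu, \sigma, r_\nu')$ where $\sigma$ is dictated by the scaling relation and is \emph{not} $Q_s(r_\nu)'$; see Lemma~\ref{lemlwph}.

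More seriously, your route to the $M^{s,s'}$ bound cannot close. Pulling Minkowski inside and estimating $\|\mathcal{H}_\nu(u)(t)\|_{L^s_x}$ by HLS + H\"older with all three factors in $L^{r_\nu}$ forces $1/s = \nu/2 + 1/r_\nu = (2+\nu)/4$, i.e.\ precisely the excluded endpoint; for $s > 4/(2+\nu)$ there is no spatial H\"older identity placing $\mathcal{H}_\nu(u)$ in $L^s$ from $L^{r_\nu}$ data alone, and hence no positive power of $T$ can appear. The paper's fix is the \emph{weighted} Strichartz estimate \eqref{weightest}: after \eqref{Mpp1} and \eqref{dsembeddings} one controls $\sup_t \bigl\|\int_0^t e^{-i\tau\partial_x^2}\mathcal{H}_\nu(u)\,d\tau\bigr\|_{L^s}$ by $\|\tau^{1/s-1/2}\mathcal{H}_\nu(u)\|_{L^\varphi_T L^{r_\nu'}}$ with $(\varphi', r_\nu) \in \widehat{\mathcal{X}}(s)$. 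This keeps the nonlinearity in $L^{r_\nu'}_x$ (where HLS + H\"older closes for every $s$ in the range), and the temporal weight together with H\"older in time produces the factor $T^{1-\nu/2-(1/s-1/2)}$; see \eqref{iphartree12}.
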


 \begin{thm}\label{mrh}
    Let $0<\nu<1$ and $r_{\nu}$ defined in \eqref{rnu}. Assume $u_{0}\in M^{s,s'}(\mathbb R)$ where $s\in (s_{0},2]$ with some $s_{0}$ satisfying
    \begin{equation}\label{s0lwp}
        \frac{4}{2+\nu} <s_{0} < 2.
        \end{equation}
    \begin{enumerate}
        \item\label{lwphar}
        Then, for $ (Q_{s_{0}}(r_{\nu}),r_{\nu})\in \widehat{\mathcal{X}}(s_{0}),$
 there exists  $T_{N}>0$ and $N_{0} > 1$ such that for any $N > N_{0},$ \eqref{NLSH}  has a unique local solution 
    \begin{equation*}
   u\in  \left(L^{\infty}_{T_{N}} L^2 \cap L^{Q_{s_{0}}(r_{\nu})}_{T_{N}} L^{r_{\nu}}\right) +
   \left(L^{\infty}_{T_{N}} M^{s_{0},s_{0}'}\cap L^{Q_{s_{0}}(r_{\nu})}_{T_{N}} L^{r_{\nu}}\right),
     \end{equation*} 
     where $T_{N}$ is of the form 
\begin{equation}\label{TNhar}
    T_{N}:=CN^{1-\frac{2}{2-\nu}\left(\frac{1+\nu}{2}+\frac{1}{s_{0}} \right)\tilde{\alpha} } \quad \text{and}\quad \tilde{\alpha}=\frac{\frac{1}{s}-\frac{1}{2}}{\frac{1}{s_{0}}-\frac{1}{s}}.
  \end{equation}
     \item\label{gwphar} Furthermore, for $s$ satisfying
  \begin{equation}\label{s0gwphar}
     \frac{2\nu+16}{8-\nu^{2}+3\nu}<s\leq 2
  \end{equation}
  the local solution of \eqref{NLSH} constructed in part \eqref{lwp} extends globally and 
\begin{equation*}
   u \in \left(L^{\infty}_{loc}(\R,L^2)\cap L^{Q_{s_{0}}(r_{\nu})}_{loc}(\R,L^{r_{\nu}})\right) +\left(L^{\infty}_{loc}(\R,M^{s_{0},s_{0}'})\cap L^{Q_{s_{0}}(r_{\nu})}_{loc}(\R,L^{r_{\nu}})\right). 
\end{equation*}
 \end{enumerate}
\end{thm}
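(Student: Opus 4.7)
The plan is to adapt the Bourgain high--low decomposition argument used for \eqref{NLS} in Theorem \ref{mr} to the Hartree setting, the key new ingredient being the Hardy--Littlewood--Sobolev (HLS) control of the convolution kernel $|x|^{-\nu}$. For part \eqref{lwphar}, I would fix $N>1$ large and apply the data-decomposition lemma (Lemma \ref{ipt}) to split
\[
u_0 = \phi_0 + \psi_0,\qquad \phi_0\in L^2(\mathbb R),\ \psi_0\in M^{s_0,s_0'}(\mathbb R),
\]
with the standard trade-off $\|\phi_0\|_{L^2}\lesssim N^{\tilde\alpha}\|u_0\|_{M^{s,s'}}$ and $\|\psi_0\|_{M^{s_0,s_0'}}\lesssim N^{-1}\|u_0\|_{M^{s,s'}}$. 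Next, I would solve \eqref{NLSH} with data $\phi_0$ to produce a global $L^2$ Hartree solution $\phi$, using \cite{ChadamGlassey} together with the classical Strichartz estimates and HLS to handle $|x|^{-\nu}\ast|\phi|^2$. The full solution is then sought in the form $u=\phi+\psi$, where $\psi$ satisfies the perturbed equation
\[
i\psi_t+\psi_{xx}\pm\bigl[(|x|^{-\nu}\ast|\phi+\psi|^2)(\phi+\psi)-(|x|^{-\nu}\ast|\phi|^2)\phi\bigr]=0,
\]
with $\psi(0)=\psi_0$ small in $M^{s_0,s_0'}$.

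The local step is a contraction argument for the Duhamel operator on
\[
X_{T_N}:=L^\infty([0,T_N],M^{s_0,s_0'})\cap L^{Q_{s_0}(r_\nu)}([0,T_N],L^{r_\nu}),
\]
estimating the modulation-space component via \eqref{Mpp1} and the Lebesgue component via the generalized Strichartz estimates for the admissible pair $(Q_{s_0}(r_\nu),r_\nu)\in\widehat{\mathcal X}(s_0)$. The central nonlinear inequality, proved by HLS in space and Hölder in time, is of the schematic form
\[
\bigl\|(|x|^{-\nu}\ast(uv))w\bigr\|_{L^{\tilde Q'}([0,T],L^{\tilde r'})}\lesssim T^{\theta}\,\|u\|_{X_T}\|v\|_{X_T}\|w\|_{X_T},
\]
which singles out the Lebesgue exponent $r_\nu=4/(2-\nu)$ in \eqref{rnu} and is precisely what enforces the threshold $s_0>4/(2+\nu)$ of \eqref{s0lwp}. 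Balancing the large factor $\|\phi\|_{L^{Q_{s_0}(r_\nu)}L^{r_\nu}}$, which Strichartz bounds by a power of $\|\phi_0\|_{L^2}\sim N^{\tilde\alpha}$, against the smallness of $\|\psi_0\|_{M^{s_0,s_0'}}\sim N^{-1}$ and the time factor $T^{\theta}$ yields the formula \eqref{TNhar} for $T_N$. Uniqueness in the stated class then follows from the same contraction estimate.

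For part \eqref{gwphar} I iterate the local solution on a sequence of time intervals $[t_k,t_{k+1}]$ with $t_{k+1}-t_k=T_{N_k}$, re-decomposing $u(t_k)\in M^{s,s'}$ at each step with a fresh parameter $N_k$. The $L^2$-component on each interval is preserved by mass conservation, while the $M^{s_0,s_0'}$-component is tracked by combining \eqref{Mpp1}, the generalized Strichartz estimates on $\psi$, and the embeddings \eqref{srp1}. The genuinely delicate and, I expect, main obstacle is to verify that the resulting sequence of time steps is not summable, i.e.\ that $\sum_k T_{N_k}=\infty$; this reduces to checking that the exponent of $N$ appearing in \eqref{TNhar} remains strictly positive as $N_k$ grows with $\|u(t_k)\|_{M^{s,s'}}$. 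Carrying out this algebraic bookkeeping under the HLS constraints dictated by $r_\nu=4/(2-\nu)$ carves out precisely the range $s>(2\nu+16)/(8-\nu^2+3\nu)$ recorded in \eqref{s0gwphar}, at which point the geometric-type divergence of the $T_{N_k}$ gives global existence in the claimed sum space.
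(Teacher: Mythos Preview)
Your overall framework is right, but two structural points diverge from what actually makes the argument close.

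First, the contraction for the perturbation $\psi$ (the paper calls it $w_0$) is \emph{not} run on $[0,T_N]$ in the space $L^\infty_T M^{s_0,s_0'}\cap L^{Q_{s_0}(r_\nu)}_T L^{r_\nu}$. It is run only in the generalized Strichartz space $L^{Q_{s_0}(r_\nu)}_{\delta_N} L^{r_\nu}$ on a \emph{short} interval $\delta_N\sim N^{-4\tilde\alpha/(2-\nu)}$, chosen so that $\delta_N^{1-\nu/2}\|\phi\|_{X_2(\delta_N)}^2\lesssim 1$. The formula \eqref{TNhar} does not come from a single balance of smallness against the time factor; it is $T_N=K\delta_N$, where $K$ counts how many $\delta_N$-steps one can take before the accumulated $L^2$ increments double the mass of $\phi_0$. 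The key extra estimate making this work is the $L^\infty_{\delta_N} L^2$ bound on the Duhamel piece $w_0-e^{it\partial_x^2}\psi_0$ via the \emph{classical} Strichartz estimate (not the generalized one), which gives the increment $\lesssim N^{-1+\frac{2}{2-\nu}(\frac{1}{s_0}-\frac12)\tilde\alpha}$ per step.

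Second, your global iteration re-decomposes $u(t_k)\in M^{s,s'}$ at each step; but at this stage $u(t_k)\in M^{s,s'}$ is \emph{not available} --- that persistence is precisely the content of Corollary \ref{globalprh}, proved afterwards using Theorem \ref{mrh}. The paper avoids this circularity by never re-splitting $u(t_k)$: instead $\psi_k:=e^{ik\delta_N\partial_x^2}\psi_0$ is kept as the pure linear evolution (bounded in $M^{s_0,s_0'}$ by \eqref{Mpp1}), and the nonlinear interaction $w_{k-1}((k)\delta_N)-e^{ik\delta_N\partial_x^2}\psi_0$ is absorbed into the $L^2$ piece $\phi_k$. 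Mass conservation then controls $\|\phi_k\|_{L^2}$ up to these small $L^2$ increments, and summing $K$ of them to size $N^{\tilde\alpha}$ yields $K$ and hence $T_N$. Positivity of the exponent in \eqref{TNhar}, after inserting the threshold $s_0>4/(2+\nu)$, is what carves out \eqref{s0gwphar}.
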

\begin{cor}\label{globalprh}
     Let $0<\nu<1$ and $u_{0}\in M^{s,s'}(\mathbb R)$ where $s$  satisfies \eqref{s0gwphar}. Recall $r_{\nu}$ given in \eqref{rnu}. Then, for $ (Q_{s}(r_{\nu}),r_{\nu})\in \widehat{\mathcal{X}}(s)$, there exists a global solution to \eqref{NLSH} 
 \begin{equation*}
    u \in C(\R,M^{s,s'}(\mathbb R)) \cap L^{Q_{s}(r_{\nu})}_{loc}(\R,L^{r_{\nu}}(\mathbb R)) .
     \end{equation*} 
\end{cor}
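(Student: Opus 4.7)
The plan is to mimic the passage from Theorems \ref{betterregularityNLS} and \ref{mr} to Corollary \ref{globalprnls}: combine the local theory of Theorem \ref{betterregularityH} with the global (loss-of-regularity) theory of Theorem \ref{mrh}\eqref{gwphar}, and then upgrade regularity globally by a uniqueness argument together with the weighted propagator bound \eqref{Mpp1}. First I would verify that \eqref{s0gwphar} implies \eqref{sbetterregularity}, so that Theorem \ref{betterregularityH} produces a unique maximal solution
$$u \in C([0,T_{*}),M^{s,s'}) \cap L^{Q_{s}(r_{\nu})}([0,T_{*}),L^{r_{\nu}}),$$
equipped with the blow-up alternative. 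By the latter, it suffices to prove
$$\sup_{t\in [0,T]}\|u(t)\|_{M^{s,s'}} < \infty \qquad \text{for every } T<\min(T_{*},\infty),$$
which will force $T_{*}=+\infty$ and give a global continuous trajectory in $M^{s,s'}$.

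Next, I pick $s_{0}\in (\tfrac{4}{2+\nu},s)$ close enough to $s$ to satisfy the hypotheses of Theorem \ref{mrh}; the embedding \eqref{srp1} supplies $u_{0}\in M^{s_{0},s_{0}'}$. Theorem \ref{mrh}\eqref{gwphar} then yields a global solution $\widetilde{u}$ in
$$\bigl(L^{\infty}_{loc}(\R,L^{2})\cap L^{Q_{s_{0}}(r_{\nu})}_{loc}(\R,L^{r_{\nu}})\bigr) + \bigl(L^{\infty}_{loc}(\R,M^{s_{0},s_{0}'})\cap L^{Q_{s_{0}}(r_{\nu})}_{loc}(\R,L^{r_{\nu}})\bigr).$$
The Hartree analog of Proposition \ref{uniqueNLS}, obtained by a Gronwall argument on the $L^{Q_{s_{0}}(r_{\nu})}_{T}L^{r_{\nu}}$ norm after applying Hardy--Littlewood--Sobolev to $|x|^{-\nu}*|u|^{2}$, identifies $u=\widetilde{u}$ a.e.\ on $[0,T_{*})$. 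Consequently, on any finite time interval $u$ inherits a decomposition $u=v+w$ with $v\in L^{\infty}_{T}L^{2}$, $w\in L^{\infty}_{T}M^{s_{0},s_{0}'}$, and $v,w\in L^{Q_{s_{0}}(r_{\nu})}_{T}L^{r_{\nu}}$.

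Third, I would apply Duhamel's formula for \eqref{NLSH}, take the $M^{s,s'}$-norm, and use \eqref{Mpp1} to bound the linear contribution by $C(1+t^{2})^{|1/s-1/2|/2}\|u_{0}\|_{M^{s,s'}}$. The nonlinear term reduces to integrating $\|(|x|^{-\nu}*|u|^{2})u\|_{M^{s,s'}}$ against a polynomial weight in $(t-\tau)$. Using the embedding $L^{s}\hookrightarrow M^{s,s'}$ from \eqref{dsembeddings}, Hardy--Littlewood--Sobolev on the Riesz potential $|x|^{-\nu}*|u|^{2}$, H\"older in space, and H\"older in time matched to the generalized Strichartz pair $(Q_{s_{0}}(r_{\nu}),r_{\nu})\in\widehat{\mathcal{X}}(s_{0})$, the integrand can be bounded by a product of the finite norms $\|u\|_{L^{Q_{s_{0}}(r_{\nu})}_{T}L^{r_{\nu}}}$ and $\|u\|_{L^{\infty}_{T}L^{2}}+\|u\|_{L^{\infty}_{T}M^{s_{0},s_{0}'}}$ furnished by step two. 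This yields the desired a priori bound on $\|u(t)\|_{M^{s,s'}}$ and hence $T_{*}=+\infty$.

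The main obstacle is the nonlinear estimate in step three: one must control the Hartree nonlinearity $\mathcal{H}_{\nu}(u)$ applied to the mixed-regularity decomposition $u=v+w$ by norms that live simultaneously in the local solution space of Theorem \ref{betterregularityH} and the global solution space of Theorem \ref{mrh}. The admissible range of $s$ in which all of the H\"older, embedding and HLS conditions close at once turns out to be exactly \eqref{s0gwphar}; locating the precise threshold $(2\nu+16)/(8-\nu^{2}+3\nu)$ is a matter of balancing the scaling of $|x|^{-\nu}$ against the scaling of $M^{s,s'}$ read off from the frequency-uniform decomposition defining the norm. Once this weighted Strichartz estimate is in place---exactly as in the passage to Corollary \ref{globalprnls} for the pure-power case---the remainder of the argument is routine.
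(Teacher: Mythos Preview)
Your overall strategy is correct and matches the paper: invoke the local theory of Theorem \ref{betterregularityH}, identify the local solution with the global one from Theorem \ref{mrh}\eqref{gwphar} via the Hartree analogue of Proposition \ref{uniqueNLS} (this is Proposition \ref{uniqueH} in the paper), and then establish an a priori bound on $\|u(t)\|_{M^{s,s'}}$ to rule out finite-time blow-up.

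The execution of your third step, however, differs from the paper and contains a gap. You propose to control the Duhamel term by Minkowski plus \eqref{Mpp1}, i.e.\ by integrating $\|\mathcal{H}_{\nu}(u)(\tau)\|_{M^{s,s'}}\lesssim \|\mathcal{H}_{\nu}(u)(\tau)\|_{L^{s}}$ against a weight in $(t-\tau)$. But for $s>4/(2+\nu)$ one cannot bound $\|\mathcal{H}_{\nu}(u)\|_{L^{s}}$ purely by powers of $\|u\|_{L^{r_{\nu}}}$: the H\"older/HLS identities force $1/s=\nu/2+1/r_{\nu}=(2+\nu)/4$, which is only the excluded endpoint. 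Trying to repair this with the mixed decomposition $u=v+w$ runs into the problem that $w\in M^{s_{0},s_{0}'}$ does not embed into any $L^{q}$ with $q>2$, so the $w$-pieces can only be controlled through the $L^{r_{\nu}}$ Strichartz norm, and the exponents still do not close. The paper avoids this entirely by using the \emph{weighted} Strichartz estimate \eqref{weightest}: after \eqref{Mpp1} and \eqref{dsembeddings} one passes to $L^{\infty}_{T}L^{s}$ of the twisted integral and then applies \eqref{weightest} with $(\sigma',\rho')=(\varphi',r_{\nu})\in\widehat{\mathcal{X}}(s)$, landing on $\|\tau^{1/s-1/2}\mathcal{H}_{\nu}(u)\|_{L^{\varphi}_{T}L^{r_{\nu}'}}$. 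Since $r_{\nu}'=4/(2+\nu)$, HLS and H\"older now cleanly yield $T^{3/8-\nu/2+1/(2s)}\|u\|_{X_{4}(T)}^{3}$ with $X_{4}(T)=L^{8/(1+\nu)}_{T}L^{r_{\nu}}\supset L^{Q_{s_{0}}(r_{\nu})}_{T}L^{r_{\nu}}$; no use is made of the $L^{\infty}_{T}L^{2}$ or $L^{\infty}_{T}M^{s_{0},s_{0}'}$ components at all.

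A related misconception: the threshold \eqref{s0gwphar} is \emph{not} the range where the nonlinear estimate closes. The exponent $3/8-\nu/2+1/(2s)$ is positive for every $s<2$ and $0<\nu<1$, so step three imposes no restriction on $s$. The bound \eqref{s0gwphar} is inherited solely from Theorem \ref{mrh}\eqref{gwphar}, which is needed to guarantee that the global auxiliary norm $\|u\|_{L^{Q_{s_{0}}(r_{\nu})}_{T}L^{r_{\nu}}}$ is finite for arbitrary $T$.
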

Up to now, the well-posedness theory of Hartree equation \eqref{NLSH} in  $M^{s_{1},s_{2}}$ has been studied only for $1 \leq s_{1} \leq 2$ and $1\leq s_{2} <2$ (Theorem \ref{ResultsHartree} \eqref{HartreeMpq}) and in $M^{s,s'}$ for $2<s$ (Theorem \ref{ResultsHartree} \eqref{RM1}). We note that the later result  have the loss of regularity (cf. Remark \ref{gai} \eqref{giaRM}). Theorem \ref{ResultsHartree} \eqref{DBF} gives GWP for all $s_1,s_2$ but with imposing square integrability condition.  
   While Theorems \ref{betterregularityH} and \ref{mrh}  and  Corollary \ref{globalprh} are the first results that  consider data in $M^{s,s'}$ with  $4/3 < s \leq 2$; and the later result gives the persistence of regularity.
\begin{Remark}\label{remarkhartee} We discuss several imposed hypotheses on the above results.
\begin{enumerate}
    \item   For $(Q_{s_{0}}(r_{\nu}),r_{\nu})$ to be in $ \widehat{\mathcal{X}}(s_{0})$, we impose an extra condition on lower bound of $s_{0}$ given as in \eqref{s0lwp}.
\item \label{remarkhartee2}   Assuming further that
  \begin{equation}\label{assumptionhar}
      \tilde{\alpha}<\left(\frac{2}{2-\nu} \left\{\frac{1+\nu}{2}+\frac{1}{s_{0}} \right\}\right)^{-1},
  \end{equation}
   so that the exponent of $N$ in \eqref{TNhar} is positive, a global solution exists for arbitrarily large values of $N$. While substituting the lower bound of $s_{0}$ (see \eqref{s0lwp}) in \eqref{assumptionhar}, we have an upper bound on $\tilde{\alpha}$ which ultimately gives the lower bound of $s$ (see \eqref{s0gwphar}), ensuring a global solution.
    \item Corollary \ref{globalprh}, when compared with Theorem \ref{mrh} \eqref{gwphar}, recovers the same solution space as initial data $M^{s,s'}$ and extend the local solution of Theorem \ref{betterregularityH} globally, for $s$ satisfying 
   \eqref{s0gwphar}.
\end{enumerate}
\end{Remark}
\subsection{\textbf{Comments on the  proof and novelties}}\label{novelty3}
Chaichenets et al. \cite{LeonidIn,leonidthesis} established well-posedness theory for \eqref{NLS} having initial data in  $M^{p,p'}$ for $ p>2$, making use of the auxiliary space $L^{q(p)}_{T} L^{p}$ so that 
\begin{equation}\label{linearpart>2}
e^{it\partial_x^2}u_{0} \in  L^{\infty}_{T} M^{p,p'} \cap L^{q(p)}_{T} L^{p}, \quad (q(p),p)\in \mathcal{A}.
\end{equation}
The above statement follows due to \eqref{Mpp1} and the embedding $M^{p,p'} \hookrightarrow L^{p}$ since $p>2$.
However, when $p<2$, the auxiliary space used for $p>2$ is no longer applicable, and a different space is required.
To overcome this challenge, we introduce an alternative auxiliary space, generalised Strichartz space, denoted as $L^{Q_{p}(r)}_{T}L^{r}$, so that 
\begin{equation}\label{linearpart}
    e^{it\partial_x^2}u_{0} \in  L^{\infty}_{T} M^{p,p'} \cap L^{Q_{p}(r)}_{T} L^{r}, \quad (Q_{p}(r),r)\in \widehat{\mathcal{X}}(p).
\end{equation}        
Moreover, establishing \eqref{linearpart} requires \eqref{Mpp1} and generalised Strichartz estimates in Fourier-Lebesgue spaces, given as 
(see e.g. \cite{hyakuna2012existence, CFeffermanAM, HyakunaJFA,grunrock2004improved})
\begin{eqnarray}\label{gse3}
    \|e^{i t \partial_x^2}f\|_{L^{Q_{p}(r)}_{T}L^{r}} \lesssim \|f\|_{\widehat{L}^{p}}, \quad  (Q_{p}(r),r) \in \widehat{\mathcal{X}}(p)
\end{eqnarray}
for $4/3< p \leq 2,\; \widehat{\mathcal{X}}(p) \neq \emptyset $ and $T>0$. Here \textbf{Fourier-Lebesgue spaces} are given by 
\begin{equation*}
    \widehat{L^{p}}:=\{f \in \mathcal{S'}:\|f\|_{\widehat{L^{p}}} := \|\widehat{f}\|_{L^{p'}}< \infty \}.
\end{equation*}
The $ \widehat{L^{p}}$ spaces are connected with $M^{p,p'}$ via the following embedding relation (cf. \cite[Corollary 1.1.]{kobayashi2011inclusion}):
\begin{equation}\label{embeddings}
     M^{p,p'} \hookrightarrow \widehat{L}^{p}  \quad  \text{for} \quad 1\leq p \leq 2.
\end{equation}
We also point out that the nonlinear estimates to treat the inhomogeneous part of \eqref{NLS}  in Section \ref{NEPH} are more intricate than the previously studied case for $p>2$. For  $p<2$, the key step is to choose $r$ of Strichartz pair $(Q_{p}(r),r)\in \widehat{\mathcal{X}}(p)$  which is very delicate, see \eqref{rvalues} and Remark \ref{whyrso}. This forces to us to perform our analysis for the different ranges of $\alpha$. While, for $p>2,\; r=\alpha+1$ will work for  all $\alpha \in (1,5)$. 
We now conclude by summarising the proof strategy in the following remarks. 
\begin{Remark}\label{ps1} The following  key ideas  enable us to prove Theorem \ref{betterregularityNLS} via fixed point argument.
\begin{enumerate}
    \item[-] To handle the  homogeneous part of \eqref{NLS} in Theorem \ref{betterregularityNLS}, we benefit from \eqref{Mpp1} and generalised Strichartz estimates \eqref{gse3}, obtaining the solution in generalised Strichartz spaces $L_T^{Q_{p(r)}}L^{r}$. See \eqref{linearfinal}.
\item[-]  To bound the Duhamel operator in the modulation spaces norm by $L_T^{Q_{p(r)}}L^{r}$, we employ weighted Strichartz estimates in $L^{p} $ spaces, developed in \cite[Proposition 2.6(i)]{HyakunaJFA}. Specifically, for any $4/3<p\leq 2, (\sigma', \rho')\in \widehat{\mathcal{X}}(p)$ and $F$ such that $s^{\frac{1}{p}-\frac{1}{2}}F(s)\in L^{\sigma}_{T}L^{\rho},$ we have 
   \begin{equation}\label{weightest}
       \sup_{t\in [0,T]}\left|\left| \int_0^t e^{-is\partial_{x}^{2}} F(s)ds\right|\right|_{L^{p}} \lesssim \|s^{\frac{1}{p}-\frac{1}{2}}F(s)\|_{L^{\sigma}_{T}L^{\rho}}.
       \end{equation}
    See \eqref{integral21}.
 \item[-]  To bound the Duhamel operator in the $L_T^{Q_{p(r)}}L^{r}$ norm by itself, we use inhomogeneous Strichartz estimates, see Proposition \ref{gse2}. Refer to Lemma \ref{lemlwp} and \eqref{integral1}.
\end{enumerate}
\end{Remark}

\begin{Remark}\label{PS2}
We briefly outline the main ideas used in the proof of Theorem \ref{mr}.
\begin{enumerate}
    \item[-] We split the initial data $u_0\in M^{p,p'} $ into sum of an arbitrarily large $L^2$ function and an arbitrarily small
$M^{p_{0},p_{0}'}$ function for some $p_{0}<2$.
Specifically, for any $N>1$ and $u_0 \in M^{p,p'}$ with $p\in (p_{0},2)$,  there exists  $\phi_0 \in L^2$ and $ \psi_0 \in M^{p_{0},p_{0}'}$   such that (cf. Lemma \ref{ipt}),
\begin{equation}\label{dp}
    u_0= \phi_0 + \psi_0, \quad  \|\phi_0\|_{L^2} \lesssim N^{\gamma}, \quad \|\psi_0\|_{M^{p_{0},p_{0}'}} \lesssim  \frac{1}{N}
\end{equation}
\begin{equation}\label{betap}
 \hspace{-5cm} \text{where}\quad  \quad \gamma = \frac{\frac{1}{p}-\frac{1}{2}}{\frac{1}{p_{0}}-\frac{1}{p}}.
    \end{equation}
   \item[-] Next, we solve two different \eqref{NLS} having initial data $\phi_0$ and $\psi_0$, and combine the solution solving \eqref{NLS}, as \[u= (v_{0} + w_{0}) + e^{it\partial_x^2} \psi_0.\] 
    Here $v_{0} $ is the $L^{2}$- global solution of \eqref{NLS} with data $\phi_0$. While $e^{it\partial_x^2}\psi_0 \in M^{p_{0},p_{0}'}$ for $t\in \R$ is the linear evolution of $\psi_0$ and $w_{0}$ is the nonlinear interaction term of $\psi_0$, see \eqref{w01}. 
       \item[-] Since $\|\psi_0\|_{M^{p_{0},p_{0}'}}\lesssim N^{-1}$ can be made small, we get $v_{0} + w_{0}$ close to $v_{0}$ in $L^2.$ Thus, we can extend the solution further by redefining initial data decomposition, $v_{0}(T) + w_{0}(T)\in L^{2}$ and $e^{iT\partial_x^2} \psi_0\in  M^{p_{0},p_{0}'}$, satisfying \eqref{dp}. Continuing these iterations with newly defined initial data \eqref{newidk} can lead to the time of existence \eqref{TNmain}. 
       \item[-]  It is evident that when the exponent of $N$ in \eqref{TNmain} is positive, a global solution exists for arbitrarily large values of $N$. Accordingly, we impose the lower bound on $p$ in Theorem \ref{mr} \eqref{gwp} as given in \eqref{pminglobal}, taking into account \eqref{pminloc} and \eqref{TNmain}, to guarantee the global extension of the solution.
       \end{enumerate}
       \end{Remark}

\par{This paper is organised as follows: In Section \ref{np}, we introduce a few notations and preliminaries that will be useful throughout the paper. Section \ref{NEPH} is dedicated to obtaining estimates for the power- and Hartree-type nonlinearities. Section \ref{NLSresults} presents the proof of Theorems \ref{betterregularityNLS}, \ref{mr} and Corollary \ref{globalprnls}. The proof of Theorems \ref{betterregularityH}, \ref{mrh} and Corollary \ref{globalprh} is provided in Section \ref{Hresults}. Finally, a detailed proof of the interpolation lemma \ref{ipt} can be found in Appendix \ref{app}.}

\section{Notations and preliminaries}\label{np}
  \subsection{Notations} 
   The symbol $X \lesssim Y$ means 
 $X \leq CY$ 
for some constant $C>0$. 
While $X \approx Y $ means $C^{-1}X\leq Y \leq CX$ for some constant $C>0.$
 The norm of the space-time Lebesgue spaces $L^{q}([0,T],L^{r}(\R))$ is defined as
$$\|u\|_{L^{q}_{T}L^{r}}:=\|u\|_{L^{q}([0,T],L^{r}(\R))}=\left(\int_{0}^{T} \|u(\cdot,t)\|_{L^{r}(\R)}^{q} dt\right)^{\frac{1}{q}}.$$
We simply write $\|u\|_{L^{q}L^{r}}$ in place of $\|u\|_{L^{q}(\R,L^{r}(\R))}.$ 
 For  $\alpha>1$ and $u,v,w\in \C$, we denote  \begin{equation*}G(u,v,w)=|u+v|^{\alpha-1}(u+v)-|u+w|^{\alpha-1}(u+w),
\end{equation*}
and we have (see e.g. \cite[Lemma 3.9]{leonidthesis})
     \begin{equation}\label{eg} 
         |G(u,v,w)| \lesssim (|u|^{\alpha-1}+|v|^{\alpha-1}+|w|^{\alpha-1})|v-w|.
     \end{equation}
For three space variable functions $f, g$, and $h$, introduce a trilinear form associated with the Hartree-type nonlinearity as
\begin{equation}\label{Hsymbol}
\mathcal{H}_\nu(f, g, h) :=(| \cdot |^{-\nu} * f\bar{g}) \times h \quad \text{and} \quad \mathcal{H}_\nu(f) := \mathcal{H}_\nu(f, f, f).
\end{equation}
\subsection{Strichartz estimates} 
Define the \textbf{ Schr\"odinger propagator $e^{i t \partial_x^2}$} as follows:
\begin{eqnarray}
\label{sg}\label{-1}
e^{i t \partial_x^2}f(x):= \int_{\R} e^{i\pi t|\xi|^{2}} \widehat{f}(\xi) e^{2\pi i  \xi \cdot x}d \xi \quad (f\in \mathcal{S}, t \in \mathbb R).
\end{eqnarray}
\begin{Definition}\label{defn1} A pair $(q(r),r)$ is admissible if  $2 \leq r\leq \infty$ and
$$\frac{2}{q(r)} =   \frac{1}{2} - \frac{1}{r}.$$
The set of all such admissible pairs is denoted by $$\mathcal{A}= \{(q(r),r):(q(r),r) \; \text{is an admissible pair} \}.$$
\end{Definition}
\begin{prop}[Strichartz estimates]\cite{KeelTao1998}\label{fst} Denote
$$DF(x,t):=  e^{i t \partial_x^2}u_{0}(x)  \pm i\int_0^t e^{i(t-s)\partial_x^2}F(x,s) ds.$$
Assume $u_{0} \in L^2$ and $F \in L^{q_2'(r_2')} (I, L^{r_2'}).$  Then for any time interval $I\ni0$ and admissible pairs $(q_j(r_j),r_j)$, $j=1,2,$ 
satisfying 
$$ \|D(F)\|_{L^{q_1(r_1)}(I,L^{r_1})}  \lesssim  \|u_{0} \|_{L^2}+    \|F\|_{L^{q'_2(r'_2)}(I,L^{r'_2})}$$ where $q_j'(r_j')$ and $ r_j'$ are H\"older conjugates of $q_j(r_j)$ and $r_j$
respectively.
\end{prop}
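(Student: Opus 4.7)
My plan is to prove Proposition~\ref{fst} by the classical Keel--Tao machinery, combining the dispersive decay of the one-dimensional free Schr\"odinger group with a $TT^*$ argument. The starting point is the pointwise kernel bound: from the explicit expression $e^{it\partial_x^2}f = K_t \ast f$ with $K_t(x) = (4\pi i t)^{-1/2} e^{i|x|^2/(4t)}$, one reads off the dispersive estimate
\[
\|e^{it\partial_x^2}f\|_{L^\infty_x} \lesssim |t|^{-1/2}\|f\|_{L^1_x},
\]
which interpolated with the unitary bound on $L^2_x$ yields $\|e^{it\partial_x^2}f\|_{L^r_x} \lesssim |t|^{-(1/2-1/r)}\|f\|_{L^{r'}_x}$ for $2 \le r \le \infty$.

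The second step is the $TT^*$ argument applied to $Tu_0 := e^{it\partial_x^2}u_0$. Its formal adjoint is $T^*F = \int_\R e^{-is\partial_x^2}F(\cdot,s)\,ds$, so $TT^*F(t) = \int_\R e^{i(t-s)\partial_x^2}F(\cdot,s)\,ds$. Plugging the dispersive decay into Hardy--Littlewood--Sobolev in the time variable, applied to the kernel $|t-s|^{-(1/2-1/r)}$, shows that $TT^* : L^{q'(r)}_t L^{r'}_x \to L^{q(r)}_t L^r_x$ is bounded exactly under the admissibility relation $2/q(r) = 1/2 - 1/r$. The $TT^*$ lemma then converts this into the homogeneous Strichartz estimate $\|e^{it\partial_x^2}u_0\|_{L^{q(r)}_t L^r_x} \lesssim \|u_0\|_{L^2_x}$ together with its dual $\|T^*F\|_{L^2_x} \lesssim \|F\|_{L^{q'(r)}_t L^{r'}_x}$.

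For the inhomogeneous estimate with two distinct admissible pairs $(q_1(r_1),r_1)$ and $(q_2(r_2),r_2)$, I would work with the bilinear form
\[
B(F,G) = \iint_{s<t} \bigl\langle e^{i(t-s)\partial_x^2}F(\cdot,s),\, G(\cdot,t)\bigr\rangle_{L^2_x}\,ds\,dt
\]
on $L^{q_2'(r_2')}_t L^{r_2'}_x \times L^{q_1'(r_1')}_t L^{r_1'}_x$. For non-endpoint one-dimensional admissible pairs, composing the homogeneous estimate with its dual (after a Minkowski step) controls the untruncated operator $\int_\R e^{i(t-s)\partial_x^2}F(\cdot,s)\,ds$; then the Christ--Kiselev lemma upgrades this to the retarded integral $\int_0^t e^{i(t-s)\partial_x^2}F(\cdot,s)\,ds$ since $q_2'(r_2') \le 4/3 < 4 \le q_1(r_1)$ ensures the strict monotonicity hypothesis. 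Combined with the homogeneous piece, this yields the stated bound on $\|D(F)\|_{L^{q_1(r_1)}_t L^{r_1}_x}$.

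The main obstacle would be the double-endpoint case where one (or both) of the pairs sits at $(q,r)=(4,\infty)$ in one dimension, which is outside the reach of Christ--Kiselev and the plain HLS step above; there one must resort either to the bilinear dyadic decomposition of Keel--Tao or to complex interpolation against a trivial endpoint. However, the applications in the present paper invoke only non-endpoint admissible pairs, so the $TT^*$-plus-Christ--Kiselev argument sketched above is enough.
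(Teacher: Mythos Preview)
Your sketch is a correct rendition of the standard $TT^*$/Christ--Kiselev proof of the non-endpoint Strichartz estimates in one dimension. Note, however, that the paper does not give a proof of Proposition~\ref{fst} at all: it is stated as a citation of Keel--Tao \cite{KeelTao1998} and used as a black box. So there is no ``paper's own proof'' to compare against; you have simply supplied the argument that the authors chose to import.

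One minor comment on your endpoint remark: in one space dimension the admissible range is $q(r)\in[4,\infty]$, so $q_2'(r_2')\le 4/3 < 4 \le q_1(r_1)$ always holds strictly, and the Christ--Kiselev lemma applies even at the pair $(4,\infty)$. The genuine obstruction at $r=\infty$ is rather that $L^\infty_x$ is not well suited to interpolation and duality (and Keel--Tao exclude it for that reason), not a failure of the time-exponent inequality. Since the paper never actually uses $r=\infty$ in its applications of Proposition~\ref{fst}, your caveat is harmless, but the diagnosis of the obstacle is slightly off.
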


\begin{prop}[Generalized inhomogeneous Strichartz estimates]\cite[Theorem 2.1]{TKatogse}\label{gse2}
Let $2<\kappa<\infty, 1<\rho<2, 0<\frac{1}{\beta}<\frac{1}{2}-\frac{1}{\kappa}$ and $\frac{3}{2}-\frac{1}{\rho}<\frac{1}{\sigma}<1$. Further assume that
$$2+\frac{2}{\beta}+\frac{1}{\kappa}=\frac{2}{\sigma}+\frac{1}{\rho}.$$ Then, for any $T>0,$ the  following estimate holds:
$$\left|\left| \int_{0}^{t} e^{i(t-s)\partial_x^2}F(s) ds \right|\right|_{L^{\beta}_{T}(L^{\kappa})} \lesssim \|F\|_{L^{\sigma}_{T}(L^{\rho})}.$$
\end{prop}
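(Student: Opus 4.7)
The plan is to establish this non-admissible inhomogeneous Strichartz estimate by converting it into a one-dimensional fractional integration bound in the time variable and then applying the Hardy-Littlewood-Sobolev inequality. The scaling hypothesis $2+\tfrac{2}{\beta}+\tfrac{1}{\kappa}=\tfrac{2}{\sigma}+\tfrac{1}{\rho}$ is exactly the condition that makes the fractional integration exponent compatible with the target $L^\sigma_T \to L^\beta_T$ mapping.

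First, apply Minkowski's integral inequality in the spatial variable to push the $L^\kappa_x$ norm inside the $s$-integral:
\[
\left\|\int_0^t e^{i(t-s)\partial_x^2} F(s)\,ds\right\|_{L^\kappa_x} \leq \int_0^t \|e^{i(t-s)\partial_x^2} F(s)\|_{L^\kappa_x}\,ds.
\]
Next, invoke a dispersive estimate of the form $\|e^{i\tau\partial_x^2} g\|_{L^\kappa_x}\lesssim |\tau|^{-\mu}\|g\|_{L^\rho_x}$ with $\mu$ dictated by scaling. In the dual case $\kappa=\rho'$ this is obtained by interpolating the stationary-phase bound $\|e^{i\tau\partial_x^2}\|_{L^1\to L^\infty}\lesssim |\tau|^{-1/2}$ against the $L^2$-isometry. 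In the genuinely non-dual range $\kappa\neq\rho'$, the bound is produced by a Littlewood-Paley decomposition combined with Bernstein's inequality, trading regularity for integrability so as to bridge $\rho'$ and $\kappa$.

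The right-hand side then takes the form of a Riesz potential $I_{1-\mu}$ acting on $t\mapsto \|F(\cdot,t)\|_{L^\rho_x}$, and the one-dimensional Hardy-Littlewood-Sobolev inequality yields $\|I_{1-\mu}h\|_{L^\beta_T}\lesssim\|h\|_{L^\sigma_T}$ precisely when $1-\mu=\tfrac{1}{\sigma}-\tfrac{1}{\beta}$, an equation which reproduces the hypothesised scaling relation. The conditions $\tfrac{3}{2}-\tfrac{1}{\rho}<\tfrac{1}{\sigma}<1$ and $0<\tfrac{1}{\beta}<\tfrac{1}{2}-\tfrac{1}{\kappa}$ ensure in particular that $1<\sigma,\beta<\infty$ and that the fractional-integration order lies in $(0,1)$, so HLS applies cleanly.

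The main obstacle is the non-dual case $\kappa\neq \rho'$, since the plain interpolation of $L^1\to L^\infty$ with $L^2\to L^2$ does not directly give an $L^\rho\to L^\kappa$ decay. I would handle this in one of two ways: either (i) the Littlewood-Paley/Bernstein-refined dispersive estimate sketched above, summed dyadically over frequency blocks with the scaling-invariance of the exponents guaranteeing absolute convergence; or (ii) a Christ-Kiselev reduction to the untruncated operator $e^{it\partial_x^2}\int_0^T e^{-is\partial_x^2}F(s)\,ds$, available because the strict inequality $\sigma<2<\beta$ is forced by the hypotheses, followed by a $TT^*$-type argument and real interpolation between admissible Strichartz endpoints. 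Either route delivers the desired estimate within the parameter region described by the statement.
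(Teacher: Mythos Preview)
The paper does not supply its own proof of this proposition; it is quoted from \cite[Theorem~2.1]{TKatogse} and used as a black box. Your sketch therefore cannot be compared against a proof in the paper, but it can be assessed on its merits.

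In the dual case $\kappa=\rho'$ your outline is correct: the interpolated dispersive bound $\|e^{i\tau\partial_x^2}g\|_{L^{\rho'}}\lesssim|\tau|^{-(1/\rho-1/2)}\|g\|_{L^\rho}$ feeds into Minkowski and then Hardy--Littlewood--Sobolev in time, and the scaling hypothesis is exactly the HLS exponent condition. This part is standard and sound.

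The genuine gap is the non-dual case $\kappa\neq\rho'$, which the proposition covers and which the paper actually uses (e.g.\ in Lemma~\ref{lemlwp} with $\kappa=r$, $\rho=r/\alpha$ and $r\neq\alpha+1$). There is no fixed-time estimate $\|e^{i\tau\partial_x^2}g\|_{L^\kappa}\lesssim|\tau|^{-\mu}\|g\|_{L^\rho}$ off the dual line: the Schr\"odinger kernel has constant modulus $|\tau|^{-1/2}$, so Young's inequality gives nothing beyond $L^\rho\to L^{\rho'}$, and your Littlewood--Paley/Bernstein patch inserts a factor $N^{|1/\rho'-1/\kappa|}$ per dyadic block that does not sum. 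Your option~(ii) fares no better: factoring the untruncated operator through $L^2$ via $TT^*$ only recovers admissible homogeneous estimates, and real interpolation between admissible endpoints keeps you on the admissible line, never reaching the strictly wider region $0<1/\beta<1/2-1/\kappa$. The proofs in Kato (and in Foschi, Vilela for the analogous results) close this gap by \emph{bilinear} interpolation of the form $(F,G)\mapsto\iint_{s<t}\langle e^{i(t-s)\partial_x^2}F(s),G(t)\rangle\,ds\,dt$ between several diagonal and admissible endpoints---a genuinely multilinear step that your outline does not contain.
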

\begin{Remark}\label{difference}
  When the pairs $(\beta,\kappa)$ and $ (\sigma',\rho')$ are admissible, the above estimates hold, as seen in Proposition \ref{fst}. However, we need the above estimates to hold even for pairs that are not necessarily admissible but satisfy the conditions in Proposition \ref{gse2}.  
\end{Remark}

\section{Nonlinear Estimates}\label{NEPH}
\subsection{Estimates for power nonlinearity}
In this subsection, we establish several lemmas that will play a crucial role in establishing the well-posedness of \eqref{NLS} in the following subsections. Taking Definitions \ref{defn2} and \ref{defn1} into account, we denote the generalised Strichartz space by
\begin{equation}\label{Y(T)}
    Y(T):=L^{Q_{p_{0}}(r)}_{T}L^{r}, \quad (Q_{p_{0}}(r),r) \in \widehat{\mathcal{X}}(p_{0})
\end{equation}
and the Strichartz space by
\begin{equation}\label{Z(T)}
    Z(T):=L^{q(r)}_{T}L^{r}, \quad (q(r),r)\in \mathcal{A} .
\end{equation}
The key challenge is to choose generalised Strichartz space exponents $(Q_{p_{0}}(r),r)$ for a given subcritical power nonlinearity, so that it is compatible with the technicality involved in the proof. We begin with the following useful remark. 
\begin{Remark}\label{whyrso} Recall the values of $r$ as given in \eqref{rvalues}.
\begin{itemize}
    \item[-] In all three cases, the value of  $r$ is chosen to satisfy $(Q_{p_{0}}(r),r)\in \widehat{\mathcal{X}}(p_{0})$, (see Definition \ref{defn2}) and  compatible with the necessary condition $p_{0}>2\alpha/5$ from Lemma \ref{lemlwp2} below. 
    \item[-] Note that when $1<\alpha < 3$ and $(\alpha+1)/\alpha<p_{0},$ we choose $  r=\alpha+1$ so that $(Q_{p_{0}}(\alpha+1),\alpha+1)\in \widehat{\mathcal{X}}(p_{0})$. In this case, the values of $1/p_{0}$ and $1/r$ fall within the region $R_1$ (cyan) of Figure \ref{fig2:myplot}.
     \item[-]For $\alpha=1,$ we cannot take $r=\alpha+1$ as in this case $r=2$ contrary to the assumption $2<r.$ See Definition \ref{defn2}.
    \item[-] Also, $(Q_{p_{0}}(\alpha+1),\alpha+1) \not \in \widehat{\mathcal{X}}(p_{0})$ for any $\alpha \in (3,5)$. Suppose, to the contrary, we choose $r=\alpha+1$ for $\alpha\in (3,5)$ as well. In this case, $r\in (4,6)$, taking Definition \ref{defn2} into account, $Q_{p_{0}}(\alpha+1) \geq 4$.  We can choose $Q_{p_{0}}(\alpha+1)=4$  satisfying 
    \begin{equation}\label{samer}
        \frac{2}{Q_{p_{0}}(\alpha+1)}+\frac{1}{\alpha+1}= \frac{1}{2}+\frac{1}{\alpha+1}=\frac{1}{p_{0}}.
   \end{equation}    
   Considering Lemma \ref{lemlwp2} below, $p_{0}>2\alpha/5$. But, \eqref{samer} doesnt qualify for $p_{0}>2\alpha/5$ when $\alpha \in (3,5)$. Hence, our claim holds.
     \item[-] We need to find another value of $r$ for this range of $\alpha.$ Thus, for $3\leq \alpha \leq 11/3$ and $4/3 <p_{0}$, we choose $r=4$. In this case, the range of  $1/p_{0}$ and $1/r$ lies along the intersecting line between $R_{1}$ (cyan) region and $R_{2}$ (green) region, as depicted in Figure \ref{fig2:myplot}, i.e. $1/r=1/4.$
    \item[-] While for $11/3 <\alpha <5$ and $12\alpha /(6\alpha+11) \leq p_{0}$, we choose $r=12\alpha /11$. The range of $1/p_{0}$ and $1/r$ in this case can be presented by the red line or region $R_{2}$ (green) of Figure \ref{fig2:myplot}.
    \end{itemize}
\end{Remark}    
\begin{Remark}\label{conditionsonp0}
    Concluding Remark \ref{whyrso}, $(Q_{p_{0}}(r),r)\in \widehat{\mathcal{X}}(p_{0})$, provided
     \begin{align}\label{generalp} \left.
     \begin{cases}
     \frac{\alpha+1}{\alpha} &\quad \text{if}\quad 
    \alpha \in (1,3)\\
    \frac{4}{3} &\quad \text{if}\quad 
    \alpha \in [3,11/3)\\
        \frac{12\alpha}{6\alpha+11} &\quad \text{if}\quad 
    \alpha \in [11/3,5)
       \end{cases} \right\}<p_{0}<2.
       \end{align}
\end{Remark}
\begin{lem}\label{lemlwp}
Let $1<\alpha<5$ and $r$ be as defined in \eqref{rvalues}. Then, for any $p_{0}$ satisfying 
\begin{eqnarray}\label{thc}
    \max \left( \frac{\alpha-1}{2}, \frac{\alpha+1}{\alpha}, \frac{4}{3} \right)<p_{0} <2,
\end{eqnarray}
and $(Q_{p_{0}}(r),r)\in \widehat{\mathcal{X}}(p_{0})$, the following estimate holds for every \( T > 0 \):
\begin{align*}
  & \left|\left| \int_0^t e^{i(t-s)\partial_x^2}G(u,v,w)(s) ds \right|\right|_{Y(T)}\\
  & \lesssim \|v-w\|_{Y(T)}\left(T^{1-\frac{\alpha-1}{4}}\|u\|_{Z(T)}^{\alpha-1}
     +T^{1-\frac{\alpha-1}{2p_{0}}}\left(\|v\|_{Y(T)}^{\alpha-1} + \|w\|_{Y(T)}^{\alpha-1}\right)\right)
\end{align*}
for all $u\in Z(T)$ and $v,w \in Y(T).$
\end{lem}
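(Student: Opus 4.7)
The plan is to apply the generalised inhomogeneous Strichartz estimate (Proposition \ref{gse2}) with $(\beta,\kappa)=(Q_{p_0}(r),r)$ to the Duhamel integral, after using the pointwise bound \eqref{eg} to split the nonlinearity into
$$F_1 := |u|^{\alpha-1}|v-w|, \qquad F_2 := (|v|^{\alpha-1}+|w|^{\alpha-1})|v-w|,$$
and then estimating each piece separately in some $L^\sigma_T L^\rho$.

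First I would pick the spatial exponent by H\"older on the spatial fibre. Since $u,v,w$ all lie in $L^r_x$, I set $\rho=r/\alpha$, so that, e.g.,
$$\bigl\||u|^{\alpha-1}(v-w)\bigr\|_{L^\rho_x}\le \|u\|_{L^r_x}^{\alpha-1}\|v-w\|_{L^r_x}.$$
The scaling identity $2+2/\beta+1/\kappa=2/\sigma+1/\rho$ of Proposition \ref{gse2} then forces $\tfrac{1}{\sigma}=1+\tfrac{1}{Q_{p_0}(r)}-\tfrac{\alpha-1}{2r}$. I would then verify the remaining hypotheses of Proposition \ref{gse2}: the bounds $2<r<\infty$, $1<r/\alpha<2$, and $0<1/Q_{p_0}(r)<1/2-1/r$ follow from \eqref{rvalues}, $1<\alpha<5$, and the definition of $\widehat{\mathcal{X}}(p_0)$; the two-sided constraint $3/2-1/\rho<1/\sigma<1$ reduces, via $2/Q_{p_0}(r)+1/r=1/p_0$, to $r/\alpha<p_0<r/(r-\alpha)$, whose left inequality is precisely the thresholds $(\alpha+1)/\alpha$ and $4/3$ in \eqref{thc}, while the right is harmless since $p_0<2$.

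The second step is H\"older in time. For $F_1$ I would pair $\||u|^{\alpha-1}\|_{L^{q(r)/(\alpha-1)}_T}=\|u\|_{L^{q(r)}_T}^{\alpha-1}$ with $\|v-w\|_{L^{Q_{p_0}(r)}_T}$, picking up a $T$-factor of exponent $1/\sigma-(\alpha-1)/q(r)-1/Q_{p_0}(r)$. Using the classical admissibility identity $1/q(r)+1/(2r)=1/4$, this exponent collapses to $1-(\alpha-1)/4$. For $F_2$ I would instead pair $\||v|^{\alpha-1}\|_{L^{Q_{p_0}(r)/(\alpha-1)}_T}$ with $\|v-w\|_{L^{Q_{p_0}(r)}_T}$ (and likewise for $w$) and use the generalised identity $1/Q_{p_0}(r)+1/(2r)=1/(2p_0)$ (from $\widehat{\mathcal{X}}(p_0)$) to produce $T^{1-(\alpha-1)/(2p_0)}$. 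Feeding both bounds back through Proposition \ref{gse2} assembles the desired inequality.

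The main obstacle is ensuring that both $T$-exponents are non-negative, since a H\"older-in-time step with left-over $T^\delta$ is meaningful only for $\delta\ge 0$. The first, $1-(\alpha-1)/4\ge 0$, is exactly $\alpha\le 5$ and is thus guaranteed by hypothesis; the second, $1-(\alpha-1)/(2p_0)\ge 0$, is precisely the lower bound $p_0>(\alpha-1)/2$ in \eqref{thc}. A secondary nuisance is that the three sub-ranges of $\alpha$ in \eqref{rvalues} must be handled separately when checking $(Q_{p_0}(r),r)\in\widehat{\mathcal{X}}(p_0)$ and the scaling/range conditions above, but each sub-case is a routine computation directly from Definition \ref{defn2}.
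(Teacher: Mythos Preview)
Your proposal is correct and follows essentially the same route as the paper's proof: apply Proposition \ref{gse2} with $(\beta,\kappa,\sigma,\rho)=(Q_{p_0}(r),r,\sigma,r/\alpha)$ and the forced value of $\sigma$, then use \eqref{eg} together with H\"older in space (splitting $\alpha/r=1/r+(\alpha-1)/r$) and H\"older in time (using $q(r)$ for the $|u|^{\alpha-1}$ factor and $Q_{p_0}(r)$ for the $|v|^{\alpha-1},|w|^{\alpha-1}$ factors). Your verification of the range conditions in Proposition \ref{gse2}, reducing $3/2-\alpha/r<1/\sigma<1$ to $r/\alpha<p_0<r/(r-\alpha)$, is exactly the check the paper summarises as ``$p_0>r/\alpha$''; note also that $(Q_{p_0}(r),r)\in\widehat{\mathcal{X}}(p_0)$ is already part of the hypothesis, so the three sub-cases of \eqref{rvalues} only enter when confirming $1<r/\alpha<2$.
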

\begin{proof}
By choosing the quadruple $(\beta,\kappa,\sigma,\rho)=(Q_{p_{0}}(r),r,\sigma,r/{\alpha})$ in Proposition \ref{gse2},
    with $p_{0}$ satisfying \eqref{generalp} and 
    \begin{equation}\label{sigma1}
        \frac{1}{\sigma}=1+\frac{1}{Q_{p_{0}}(r)}-\frac{\alpha-1}{2r}=1+\frac{1}{2p_{0}}-\frac{\alpha}{2r},
    \end{equation}
    we have
  \begin{align*}
   \left|\left| \int_0^t e^{i(t-s)\partial_x^2}G(u,v,w)(s) ds \right|\right|_{Y(T)}\lesssim \left|\left| G(u,v,w) \right|\right|_{L^{\sigma}_{T}L^{r/{\alpha}}}. 
\end{align*}
We can easily check that our chosen quadruple satisfies all conditions proposed in Proposition \ref{gse2} for $p_{0}>r/ \alpha$. 
Note that  
\begin{align}
\frac{1}{r/\alpha}=\frac{1}{r}&+\frac{\alpha-1}{r}\label{hc1}\\ \text{and} \quad
\frac{1}{\sigma}=1-\frac{\alpha-1}{4}+\frac{1}{Q_{p_{0}}(r)}+&\frac{\alpha-1}{q(r)}=1-\frac{\alpha-1}{2p_{0}}+\frac{\alpha}{Q_{p_{0}}(r)}\label{hc2}. 
\end{align}
Using  \eqref{eg} and H\"older's inequality twice with H\"older conditions \eqref{hc1} and \eqref{hc2} with respect to space and time variables, respectively, we obtain
\begin{align*}
 &  \left|\left| G(u,v,w) \right|\right|_{L_T^{\sigma}L^{r/{\alpha}}} 
\lesssim \left\| \|v-w \|_{L^r} \| (|u|^{\alpha-1}+|v|^{\alpha-1}+|w|^{\alpha-1}) \|_{L^{r/(\alpha-1)}} \right\|_{L^{\sigma}_T}\\
 &\lesssim \left\{T^{1-\frac{\alpha-1}{4}}\|u\|^{\alpha-1}_{L^{q(r)}_{T}L^r} + T^{1-\frac{\alpha-1}{2p_{0}}}\left(\|v\|^{\alpha-1}_{L^{Q_{p_{0}}(r)}_{T}L^r}+\|w\|^{\alpha-1}_{L^{Q_{p_{0}}(r)}_{T}L^r}\right)\right\}\|v-w \|_{L^{Q_{p_{0}}(r)}_{T}L^r}\\
&= \|v-w\|_{Y(T)}\left(T^{1-\frac{\alpha-1}{4}}\|u\|_{Z(T)}^{\alpha-1}+T^{1-\frac{\alpha-1}{2p_{0}}}\left(\|v\|_{Y(T)}^{\alpha-1} + \|w\|_{Y(T)}^{\alpha-1}\right)\right).
     \end{align*}
In the last inequality, the exponents of $T$ are both positive since $1<\alpha<5$ and $p_{0}>(\alpha-1)/2$ \footnote{Also, $(\alpha-1)/2>12\alpha/(6\alpha+11)$ for $11/3<\alpha<5$.}.
    \end{proof}

Next, we  estimate $L_T^{\infty}L^2-$norm under stronger assumption \eqref{thc2} (cf. \eqref{thc}). 
\begin{lem}\label{lemlwp2}
 Let $1<\alpha<5$ and $r$ be as defined in \eqref{rvalues}. Further assume
\begin{eqnarray}\label{thc2}
    \max\left\{ \frac{2\alpha}{5},\frac{4}{3},\frac{\alpha+1}{\alpha} \right\} < p_{0}.
\end{eqnarray}
Then, for all $ T > 0 $ and $(Q_{p_{0}}(r),r)\in \widehat{\mathcal{X}}(p_{0})$, the following estimate holds:
\begin{align*}
   & \left|\left| \int_0^t e^{i(t-s)\partial_x^2}G(u,v,0)(s) ds \right|\right|_{L^{\infty}_{T}L^{2}}\\
   &\lesssim T^{1-\frac{\alpha-1}{4}-\frac{1}{2}\left(\frac{1}{p_{0}}-\frac{1}{2}\right)}\|u\|_{Z(T)}^{\alpha-1}\|v\|_{Y(T)}
     +T^{1-\frac{\alpha-1}{2p_{0}}-\frac{1}{2}\left(\frac{1}{p_{0}}-\frac{1}{2}\right)}\|v\|_{Y(T)}^{\alpha} .
\end{align*}
with $u\in Z(T)$ and $ v \in Y(T).$
\end{lem}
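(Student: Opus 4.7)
The plan is to mirror the proof of Lemma \ref{lemlwp}, but since the target norm is now $L^\infty_T L^2$ rather than $Y(T)$, the generalised inhomogeneous Strichartz estimate of Proposition \ref{gse2} no longer applies (its hypothesis $\kappa>2$ excludes $\kappa=2$). Instead, I would invoke the classical Strichartz estimate (Proposition \ref{fst}) with the admissible pair $(\infty,2)$ on the left, yielding
\begin{equation*}
\left\|\int_0^t e^{i(t-s)\partial_x^2}G(u,v,0)(s)\,ds\right\|_{L^\infty_T L^2}\lesssim \|G(u,v,0)\|_{L^{\tilde q'}_T L^{\tilde r'}}
\end{equation*}
for any admissible pair $(\tilde q,\tilde r)$, to be chosen. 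Motivated by the spatial H\"older decomposition of $(|u|^{\alpha-1}+|v|^{\alpha-1})|v|$ with each factor in $L^r$, the natural choice is $\tilde r'=r/\alpha$, i.e.\ $\tilde r=r/(r-\alpha)$. One checks that $\tilde r\ge 2$ in each of the three regimes of \eqref{rvalues} ($\tilde r=\alpha+1$, $4/(4-\alpha)$, or $12$ respectively), and the corresponding $\tilde q$ is forced by admissibility via $1/\tilde q=\alpha/(2r)-1/4$.

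With this setup, I would apply the pointwise bound \eqref{eg}, namely $|G(u,v,0)|\lesssim(|u|^{\alpha-1}+|v|^{\alpha-1})|v|$, together with H\"older in space using the relation $\tfrac{\alpha-1}{r}+\tfrac{1}{r}=\tfrac{\alpha}{r}=\tfrac{1}{\tilde r'}$, to obtain
\begin{equation*}
\|G(u,v,0)(t)\|_{L^{\tilde r'}}\lesssim \|u(t)\|_{L^r}^{\alpha-1}\|v(t)\|_{L^r}+\|v(t)\|_{L^r}^{\alpha}.
\end{equation*}
Then H\"older in time, with a constant factor $\|1\|_{L^{\sigma_i}_T}=T^{1/\sigma_i}$ absorbing the mismatch between the left and right time exponents, delivers the two required weights. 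Using $1/\tilde q'=5/4-\alpha/(2r)$ and $(\alpha-1)/q(r)+1/Q_{p_0}(r)=(\alpha-1)/4-\alpha/(2r)+1/(2p_0)$, the mixed term produces
\begin{equation*}
\tfrac{1}{\sigma_1}=\tfrac{6-\alpha}{4}-\tfrac{1}{2p_0}=1-\tfrac{\alpha-1}{4}-\tfrac{1}{2}\bigl(\tfrac{1}{p_0}-\tfrac{1}{2}\bigr),
\end{equation*}
while the pure term $|v|^\alpha$ produces $1/\sigma_2=5/4-\alpha/(2p_0)=1-(\alpha-1)/(2p_0)-\tfrac{1}{2}(\tfrac{1}{p_0}-\tfrac{1}{2})$, which are precisely the exponents of $T$ asserted in the statement.

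The main obstacle is to confirm that all the compatibility constraints hold simultaneously, and in particular that the two time exponents are nonnegative so the estimate is effective as $T\downarrow 0$. Positivity of $1/\sigma_2$ is equivalent to $p_0>2\alpha/5$, which is the first clause of \eqref{thc2}; positivity of $1/\sigma_1$ demands $p_0>2/(6-\alpha)$, and since $(\alpha-1)(\alpha-5)<0$ for $\alpha\in(1,5)$ yields $2/(6-\alpha)<2\alpha/5$, this constraint is automatic. The remaining hypotheses $p_0>4/3$ and $p_0>(\alpha+1)/\alpha$ in \eqref{thc2} come from ensuring $(Q_{p_0}(r),r)\in\widehat{\mathcal{X}}(p_0)$ (see Remark \ref{conditionsonp0}), and the admissibility $\tilde r\ge 2$, H\"older conditions $0<1/\sigma_i\le 1$, and $\tilde q'>1$ are straightforward to verify regime by regime from the explicit values of $r$ given in \eqref{rvalues}.
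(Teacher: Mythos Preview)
Your proposal is correct and follows essentially the same route as the paper: apply the classical Strichartz estimate (Proposition \ref{fst}) with the admissible pair $(\infty,2)$ on the left and the dual of the admissible pair $(\sigma',(r/\alpha)')$ on the right, then use \eqref{eg} together with H\"older in space and time exactly as in Lemma \ref{lemlwp}. Your $\tilde q'$ is precisely the paper's $\sigma$, and your detailed positivity checks (in particular $p_0>2\alpha/5$ forcing $1/\sigma_2>0$ and $(\alpha-1)(\alpha-5)<0$ making $1/\sigma_1>0$ automatic) simply make explicit what the paper leaves to the reader.
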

\begin{proof}
Using Proposition \ref{fst} with admissible pairs, $(\infty,2)$ and $(\sigma',(r/{\alpha})')$, where
\begin{equation*}
        \frac{1}{\sigma}=1+\frac{1}{q(r)}-\frac{\alpha-1}{2r}=1+\frac{1}{4}-\frac{\alpha}{2r},
    \end{equation*}
we have
  \begin{align*}
   \left|\left| \int_0^t e^{i(t-s)\partial_x^2}G(u,v,w)(s) ds \right|\right|_{L^{\infty}_{T} L^{2}}\lesssim \left|\left| G(u,v,w) \right|\right|_{L^{\sigma}_{T}L^{r/{\alpha}}}. 
\end{align*}
The rest of the proof follows the same lines as Lemma \ref{lemlwp}. The main difference lies in the auxiliary result employed: Lemma \ref{lemlwp} relies on Proposition \ref{gse2}, whereas the proof of Lemma \ref{lemlwp2} requires Proposition \ref{fst}. See also Remark \ref{difference} for further clarification.
Note that the exponents of $T$ are positive based on the assumptions $1<\alpha<5$ and $p_{0}>2\alpha /5.$ 
\end{proof}
\subsection{Estimates for Hartree nonlinearity}
 Recall $r_{\nu}$ as defined in \eqref{rnu}. Denote the generalised Strichartz space by
\begin{equation}\label{X1(T)}
    X_{1}(T):=L^{Q_{s_{0}}(r_{\nu})}_{T}L^{r_{\nu}}, \quad (Q_{s_{0}}(r_{\nu}),r_{\nu}) \in \widehat{\mathcal{X}}(s_{0})
\end{equation}
and the Strichartz space by
\begin{equation}\label{X2(T)}
    X_{2}(T):=L^{q(r_{\nu})}_{T}L^{r_{\nu}}, \quad (q(r_{\nu}),r_{\nu})\in \mathcal{A} .
\end{equation}
\begin{lem}\label{lemlwph}
Assume $0<\nu<1$ and $$ \frac{4}{2+\nu} <s_{0}< 2.$$
Then, for every \( T > 0 \) and $(Q_{s_{0}}(r_{\nu}),r_{\nu}) \in \widehat{\mathcal{X}}(s_{0})$, the following estimate holds:
\begin{enumerate}
\item\label{xxx} For $u,v,w \in X_{1}(T),$ we have\begin{align*}
   \hspace{-0.5cm}\left|\left| \int_0^t e^{i(t-s)\partial_{x}^{2}}\mathcal{H}_{\nu}(u,v,w)(s) ds \right|\right|_{X_{1}(T)} \lesssim T^{1-\frac{\nu}{2}-\left(\frac{1}{s_{0}}-\frac{1}{2}\right)}\left(\|u\|_{X_{1}(T)}
     \|v\|_{X_{1}(T)}  \|w\|_{X_{1}(T)}\right).
\end{align*}
\item\label{yxx}For $u\in X_{2}(T)$ and $v,w \in X_{1}(T)$, we have
\begin{align*}
   \hspace{-0.5cm}\left|\left| \int_0^t e^{i(t-s)\partial_{x}^{2}}\mathcal{H}_{\nu}(u,v,w)(s) ds \right|\right|_{X_{1}(T)} \lesssim T^{1-\frac{\nu}{2}-\frac{1}{2}\left(\frac{1}{s_{0}}-\frac{1}{2}\right)}\left(\|u\|_{X_{2}(T)}
     \|v\|_{X_{1}(T)}  \|w\|_{X_{1}(T)}\right).
\end{align*}
\item\label{yyx} For $u,v\in X_{2}(T)$ and $w\in X_{1}(T)$, we have
\begin{align*}
   \hspace{-0.5cm}\left|\left| \int_0^t e^{i(t-s)\partial_{x}^{2}}\mathcal{H}_{\nu}(u,v,w)(s) ds \right|\right|_{X_{1}(T)} \lesssim T^{1-\frac{\nu}{2}}\left(\|u\|_{X_{2}(T)}
     \|v\|_{X_{2}(T)}  \|w\|_{X_{1}(T)}\right).
\end{align*}
\end{enumerate}
\end{lem}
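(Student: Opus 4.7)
The proof will follow the template of Lemmas \ref{lemlwp} and \ref{lemlwp2}, adapted to the Hartree trilinear form $\mathcal{H}_\nu$. The plan is to reduce the Duhamel estimate to a mixed space--time Lebesgue bound on $\mathcal{H}_\nu(u,v,w)$ via a Strichartz-type estimate, handle the spatial part with the one-dimensional Hardy--Littlewood--Sobolev (HLS) inequality, and close the time integral by H\"older, reading off the correct power of $T$ in each of the three cases.

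Concretely, I would apply the classical inhomogeneous Strichartz estimate (Proposition \ref{fst}) with the admissible output pair $(q(r_\nu), r_\nu)$ and input pair $(q(r_\nu)', r_\nu')$ to obtain
\[
\left\| \int_0^t e^{i(t-s)\partial_x^2} \mathcal{H}_\nu(u,v,w)(s)\,ds \right\|_{L^{q(r_\nu)}_T L^{r_\nu}} \lesssim \|\mathcal{H}_\nu(u,v,w)\|_{L^{q(r_\nu)'}_T L^{r_\nu'}}.
\]
Because $Q_{s_0}(r_\nu) < q(r_\nu)$ whenever $s_0 < 2$ (Remark \ref{chipAp}), the embedding $L^{q(r_\nu)}_T \hookrightarrow L^{Q_{s_0}(r_\nu)}_T$ then converts the left-hand side from $\|\cdot\|_{X_2(T)}$ into $\|\cdot\|_{X_1(T)}$ at the cost of the temporal factor $T^{1/Q_{s_0}(r_\nu)-1/q(r_\nu)} = T^{1/(2s_0)-1/4}$. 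An equally valid alternative for part \eqref{xxx} is to invoke the Kato-type Proposition \ref{gse2} directly with $(\beta,\kappa) = (Q_{s_0}(r_\nu), r_\nu)$; both routes produce the same $T$-exponent.

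For the spatial factor, since $0<\nu<1$, the one-dimensional HLS inequality combined with H\"older in space yields
\[
\|\mathcal{H}_\nu(u,v,w)(t)\|_{L^{r_\nu'}} \lesssim \|u(t)\|_{L^{r_\nu}} \|v(t)\|_{L^{r_\nu}} \|w(t)\|_{L^{r_\nu}},
\]
the reason being that $3/r_\nu - (1-\nu) = (2+\nu)/4 = 1/r_\nu'$. This reduces the temporal step to a product H\"older in $t$, in which each of $u,v,w$ is placed in $L^{q(r_\nu)}_T$ or $L^{Q_{s_0}(r_\nu)}_T$ according to whether it lives in $X_2(T)$ or $X_1(T)$. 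Each swap of an $X_1$-factor for an $X_2$-factor cheapens the $T$-exponent by exactly $1/(2s_0)-1/4$; adding these contributions to the fixed $X_2 \hookrightarrow X_1$ embedding gap reproduces the three declared exponents $1-\nu/2-(1/s_0-1/2)$, $1-\nu/2-(1/s_0-1/2)/2$, and $1-\nu/2$ in parts \eqref{xxx}, \eqref{yxx}, \eqref{yyx}, respectively.

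The principal obstacle is bookkeeping: one must check that every intermediate temporal H\"older index is admissible (i.e.\ that the $T$ exponents produced along the way are nonnegative) and that each of the three declared exponents is strictly positive, so that the bounds vanish as $T\to 0^+$ and feed into a contraction argument. Both conditions reduce to the standing hypothesis $4/(2+\nu)<s_0<2$ together with $0<\nu<1$; in particular, this range forces $(Q_{s_0}(r_\nu), r_\nu)\in \widehat{\mathcal{X}}(s_0)$ in the sense of Definition \ref{defn2}, so that $X_1(T)$ is well-defined. No finer structural property of the Hartree nonlinearity beyond HLS and H\"older is required.
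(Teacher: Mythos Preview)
Your proposal is correct and follows essentially the same scheme as the paper: reduce the Duhamel term to a mixed Lebesgue norm of $\mathcal{H}_\nu$, apply HLS plus H\"older in space to arrive at a triple product of $L^{r_\nu}$-norms, then H\"older in time. The only organizational difference is that the paper invokes Proposition \ref{gse2} directly with $(\beta,\kappa,\sigma,\rho)=(Q_{s_0}(r_\nu),r_\nu,\sigma,r_\nu')$ to land immediately in $X_1(T)$ and proves part \eqref{xxx} first, then obtains \eqref{yxx} and \eqref{yyx} by applying the embedding $\|\cdot\|_{X_1(T)}\le T^{\frac12(\frac{1}{s_0}-\frac12)}\|\cdot\|_{X_2(T)}$ to one or two factors on the right-hand side; your primary route instead uses the classical Strichartz estimate (Proposition \ref{fst}) and absorbs one copy of that same embedding on the left-hand side, which you already identify as equivalent.
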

\begin{proof}
By choosing the quadruple $(\beta,\kappa,\sigma,\rho)=(Q_{s_{0}}(r_{\nu}),r_{\nu},\sigma,r_{\nu}')$ in Proposition \ref{gse2}, where $\sigma$ can be determined by the relation $$2+\frac{2}{Q_{s_{0}}(r_{\nu})}+\frac{1}{r_{\nu}}=\frac{2}{\sigma}+\frac{1}{r_{\nu}'},$$ we have 
  \begin{align*}
   \left|\left| \int_0^t e^{i(t-s)\partial_{x}^{2}}\mathcal{H}_{\nu}(u,v,w)(s) ds \right|\right|_{X_{1}(T)}\lesssim \left|\left| \mathcal{H}_{\nu}(u,v,w) \right|\right|_{L^{\sigma}L^{r_{\nu}'}}. 
\end{align*}
We can easily check that our chosen quadruple satisfies all conditions proposed in Proposition \ref{gse2} for $s_{0}>r_{\nu}'$. 
Define $a$ and $b$ as
\begin{align}
    \frac{1}{\sigma}&=\frac{1}{a}+\frac{1}{Q_{s_{0}}}=\frac{3}{Q_{s_{0}}}+1-\frac{\nu}{2}-\left(\frac{1}{s_{0}}-\frac{1}{2}\right)\label{expa}\\
    \frac{1}{r_{\nu}'}&=\frac{1}{r_{\nu}}+\frac{1}{b}=\frac{1}{r_{\nu}}+\frac{2}{r_{\nu}}+\nu-1 \label{expb} 
\end{align}
Further, applying Hardy-Littlewood Sobolev inequality and H\"older's inequality twice, with $a$ and $b$ satisfying \eqref{expa} and \eqref{expb}, respectively, we have
\begin{align*}
    \left|\left| \mathcal{H}_{\nu}(u,v,w) \right|\right|_{L^{\sigma}L^{r_{\nu}'}} 
    & \lesssim \| |\cdot|^{-\nu}\ast (uv) \|_{L^{a}_{T}L^{b}} \|w\|_{X_{1}(T)}   \\
&\lesssim T^{1-\frac{\nu}{2}-\left(\frac{1}{s_{0}}-\frac{1}{2}\right)}\left(\|u\|_{X_{1}(T)}
     \|v\|_{X_{1}(T)}  \|w\|_{X_{1}(T)}\right).
\end{align*}
The exponents of $T$ are positive since we assume
$$\frac{4}{2+\nu}<s_{0} < 2 \quad \text{and} \quad 0<\nu<1.$$

The remaining inequalities follow from H\"older's inequality. Particularly, $\|\cdot\|_{X_{1}(T)} \leq T^{\frac{1}{2}\left(\frac{1}{s_{0}}-\frac{1}{2}\right)}\|\cdot \|_{X_{2}(T)}$. This concludes our claim.
\end{proof}

\section{Proofs of Theorems \ref{betterregularityNLS}, \ref{mr} and Corollary \ref{globalprnls}}\label{NLSresults}

\subsection{Proof of Theorem \ref{betterregularityNLS}}
Based on the Duhamel principle, the solutions to \eqref{NLS} are equivalent to the integral equation 
\begin{equation}\label{DuhamelP}
    u(t)=e^{it\partial_{x}^{2}}u_{0}\pm i  \int_{0}^{t}e^{i(t-s)\partial_{x}^{2}}(|u|^{\alpha-1}u)(s)ds:=\Lambda_{1}(u)(t).
\end{equation}
Taking Remark \ref{conditionsonp0} into account for
     $$\max\left\{\frac{4}{3},\frac{\alpha+1}{\alpha}, \frac{12\alpha}{6\alpha+11} \right\}<p,$$
we have $$(Q_{p}(r),r)\in \widehat{\mathcal{X}}(p).$$
    Define \begin{equation}\label{Y1(T)}
    Y_{1}(T):=L^{Q_{p}(r)}_{T}L^{r}
\end{equation}
and $$X(T):=Y_{1}(T) \cap L^{\infty}_{T}M^{p,p'}$$ 
    equipped with the norm 
$$\|v\|_{X(T)}=\max \left\{\|v\|_{Y_{1}(T)}, \|v\|_{L^{\infty}_{T}M^{p,p'}}\right\} .$$
For $a$ and $T$ be positive real numbers (to be chosen later), denote $$B(a, T):=\{u\in X(T):\|u\|_{X(T)}\leq a\}.$$
We will show that $\Lambda_{1}$ 
    is a contraction map on $B(a,T).$ 
    Firstly, we consider the linear evolution of $u_{0}\in M^{p,p'}$.  Assume w.l.o.g. that $T\leq 1$. Using \eqref{gse3} and \eqref{embeddings}, we have
    \begin{equation}\label{linear1}
         \|e^{i t \partial_x^2}u_{0}\|_{Y_{1}(T)} \lesssim 
        \|u_{0}\|_{M^{p,p'}}.
    \end{equation}
    Combining \eqref{linear1} and \eqref{Mpp1}, we have
    \begin{equation}\label{linearfinal}
        \|e^{i t \partial_x^2}u_{0}\|_{X(T)} \lesssim \|u_{0}\|_{M^{p,p'}}.
         \end{equation}
    This suggests the choice of
    $\label{achoice}
    a=C\|u_{0}\|_{M^{p,p'}}.$
    Using Lemma \ref{lemlwp}, for $u\in B(a,T)$ and $p>(\alpha-1)/2$, we have
    \begin{align}\label{integral1}
    \left|\left| \int_{0}^{t} e^{i(t-\tau)\partial_{x}^{2}}(|u|^{\alpha-1}u)(s)ds \right|\right|_{Y_{1}(T)}&=\left|\left| \int_{0}^{t} e^{i(t-\tau)\partial_{x}^{2}}G(0,u,0)(s)ds \right|\right|_{Y_{1}(T)} \nonumber\\
    &\lesssim   T^{1-\frac{\alpha-1}{2p}}\|u\|_{Y_{1}(T)}^{\alpha}.
    \end{align}
    Next, we bound the Duhamel operator in the modulation spaces norm by $Y_{1}(T)$.  To this end, the idea is to invoke \eqref{weightest} and so we  choose  $b=b(r)$ as follows 
    \begin{equation}\label{HolderB}
        \frac{1}{b}=\frac{3}{2}-\frac{\alpha}{2r}-\frac{1}{2p}=\frac{3p-\alpha-1}{2p}+\frac{\alpha}{Q_{p}(r)}.
      \end{equation}
 In view of this, together with  \eqref{rvalues}, we obtain      
    \begin{gather*}b'=\frac{2rp}{r+p(\alpha-r)}=
        \begin{cases}
       \frac{2p(\alpha+1)}{\alpha+1-p}  &\quad \text{if}\quad \alpha \in (1,3]\\
         \frac{8p}{4+p(\alpha-4)}  & \quad \text{if}\quad \alpha \in (3,11/3]\\
         \frac{24p}{12-p} & \quad \text{if}\quad \alpha \in (11/3,5)
        \end{cases}.
    \end{gather*}
By invoking the   hypothesis on $p$ (see Remark \ref{why1} below), we obtain   $$(b',(r/\alpha)') \in \widehat{\mathcal{X}}(p).$$ 
Now, using \eqref{Mpp1}, \eqref{dsembeddings}, \eqref{weightest} with $(\sigma',\rho')=(b',(r/\alpha)')\in \widehat{\mathcal{X}}(p)$ and H\"older  inequality with \eqref{HolderB}, we obtain 
   \begin{align}
    \left|\left| \int_{0}^{t} e^{i(t-\tau)\partial_{x}^{2}}(|u|^{\alpha-1}u)(\tau)d\tau \right|\right|_{L_{T}^{\infty}M^{p,p'}}
   &= \left|\left| e^{it\partial_{x}^{2}} \left\{\int_{0}^{t} e^{-i\tau\partial_{x}^{2}}(|u|^{\alpha-1}u)(\tau)d\tau \right\}\right|\right|_{L_{T}^{\infty}M^{p,p'}}\nonumber\\
   &\lesssim  \left|\left| \int_{0}^{t} e^{-i\tau\partial_{x}^{2}}(|u|^{\alpha-1}u)(\tau)d\tau \right|\right|_{L_{T}^{\infty}M^{p,p'}} \nonumber\\
    &\lesssim  \left|\left| \int_{0}^{t} e^{-i\tau\partial_{x}^{2}}(|u|^{\alpha-1}u)(\tau)d\tau \right|\right|_{L_{T}^{\infty}L^{p}} \nonumber\\
    &\lesssim \||\tau|^{\frac{1}{p}-\frac{1}{2}}|u(\tau)|^{\alpha}\|_{L_{T}^{b}L^{\frac{r}{\alpha}}}\nonumber\\
     &\lesssim \||\tau|^{\frac{1}{p}-\frac{1}{2}}\|_{L_{T}^{\frac{2p}{3p-\alpha-1}}}\|u\|_{Y_{1}(T)}^{\alpha}\nonumber\\
   &\lesssim \label{integral21} T^{1-\frac{\alpha-1}{2p}}\|u\|_{Y_{1}(T)}^{\alpha}.
    \end{align}    
Thus, combining \eqref{integral1} and \eqref{integral21}, we have
    \begin{equation}\label{integralfinal}
          \left|\left| \int_{0}^{t} e^{i(t-\tau)\partial_x^2}(|u|^{\alpha-1}u)(s)ds \right|\right|_{X(T)} \lesssim T^{1-\frac{\alpha-1}{2p}}\|u\|_{X(T)}^{\alpha}.
    \end{equation}
Taking
    \begin{equation}\label{T*NLS}
        T:=\min \left\{ 1,~C(\alpha) \|u_{0}\|^{-\frac{\alpha-1}{1-\frac{\alpha-1}{2p}}}_{M^{p,p'}} \right \},
    \end{equation}
 we combine \eqref{linearfinal} and \eqref{integralfinal} to conclude $\Lambda_{1}(u)\in B(a, T).$\\
 Using  \eqref{eg} for $G(0,u,v)$ and the previous argument employed, for $u,v \in B(a,T)$, we can similarly establish
 $$\|\Lambda_{1}(u)-\Lambda_{1}(v)\|_{X(T)}\leq \frac{1}{2}\|u-v\|_{X(T)}.$$
Thus, by the contraction mapping theorem, we obtain a unique fixed point of $\Lambda_{1}(u)$, which is a solution to \eqref{NLS}.\\
 For Lipschitz continuity, let $ v_{0}, w_{0} \in V $, where $ V $ is a neighborhood of $ u_{0}$. Denote by $v$ and $w$ the unique maximal solutions of \eqref{NLS} over the interval $[0,T^*)$ with initial values $ v_{0} $ and $ w_{0} $, respectively. Thus, \( v \) and \( w \in X(T')\) for \( T' < T^{*}.  \)
Hence
\begin{eqnarray*}
    \|v-w\|_{X(T')} &=&\|e^{it\partial_x^2}v_{0}-e^{it\partial_x^2}w_{0} + \Lambda_{1}(v) - \Lambda_{1}(w) \|_{X(T')}\\
    & \lesssim & \|v_{0}-w_{0}\|_{M^{p,p'}}+\frac{1}{2}\|v-w\|_{X(T')}.\\
\text{Thus,}\;
\|v-w\|_{X(T')} &\lesssim & \|v_{0}-w_{0}\|_{M^{p,p'}}
\end{eqnarray*}
for any $v_{0}, w_{0} \in V \subset M^{p,p'}.$ This concludes local Lipschitz continuity.
\qed
\begin{Remark}\label{why1} The specific lower bound imposed on $p$ in Theorem \ref{betterregularityNLS} is determined based on the considerations outlined below for different ranges of $\alpha$. 
\begin{enumerate}
\item For $\alpha \in (1,3]$ and $p>(\alpha+1)/\alpha$, $$\left(b',\left(\frac{r}{\alpha}\right)'\right)=\left(\frac{2p(\alpha+1)}{\alpha+1-p},\alpha+1 \right) \in \widehat{\mathcal{X}}(p). $$
The region shaded in cyan ($R_1$) in Figure \ref{fig2:myplot} characterises the range of $1/p_{0}$ and $1/r$.
    \item While for $\alpha \in (3,11/3]$, we have $$\left(b',\left(\frac{r}{\alpha}\right)'\right)
=\left(\frac{8p}{4+p(\alpha-4)},\frac{4}{4-\alpha}\right). $$ Since $\frac{4}{4-\alpha}>4$, considering Definition \ref{defn2}, we require $\frac{8p}{4+p(\alpha-4)} \geq 4$. Thus for $p \geq 4/(6-\alpha),$ we have $ (b',\left(r/\alpha \right)')\in \widehat{\mathcal{X}}(p)$. The values of 
$1/p_{0}$ and $1/r$ are precisely those contained in region $R_{2}$ (green) or on the red line of Figure \ref{fig2:myplot}.
    \item 
 It is important to note that when $\alpha \in (11/3,5)$, $$\left(b',\left(\frac{r}{\alpha}\right)'\right)
 =\left(\frac{24p}{12-p},12\right).$$ Considering Definition \ref{defn2}, we require $\frac{24p}{12-p}\geq 4$. For $p \geq 12/7,$ we have $(b',\left(r/\alpha \right)') \in \widehat{\mathcal{X}}(p)$. The possible values of $1/p_{0}$ and $1/r$ are precisely those contained in the region $R_{2}$ (green) or on the red line of Figure \ref{fig2:myplot}.
 \item \label{whyppor} Taking Remark \ref{conditionsonp0} and the preceding remarks into account, the lower bound of $p$ can be determined as \begin{gather*}2\geq p>
    \begin{cases}
    \max\{ \frac{\alpha+1}{\alpha}, \frac{\alpha-1}{2}\}\quad &\text{if}\quad \alpha \in (1,3]\\
     \max\{ \frac{4}{3},\frac{4}{6-\alpha}, \frac{\alpha-1}{2}\}\quad &\text{if}\quad \alpha \in (3,11/3]\\
    \max\{\frac{12\alpha}{6\alpha+11},  \frac{12}{7},\frac{\alpha-1}{2}\}\quad &\text{if}\quad \alpha \in (11/3, 5).
    \end{cases}
    \end{gather*}
\end{enumerate}
    \end{Remark}

\subsection{Proof of Theorem \ref{mr}}\label{secMR}
To begin with the proof of Theorem \ref{mr} \eqref{lwp}, we start by decomposing the initial data $u_0 $ into $\phi_0 \in L^2$ and $\psi_0 \in M^{p_{0},p_{0}'}$ as done in \eqref{dp} (also see Lemma \ref{ipt}). The solution to \eqref{NLS} having initial data $u_{0}$ can be written as $\phi_{0}+\psi_{0}$. Here, $v_{0}$ and $w_{0}$ denote the solutions corresponding to the initial data 
$\phi_0$ and $\psi_{0}$, respectively. 
Consider \eqref{NLS} with initial data $\phi_0,$ namely
\begin{eqnarray}\label{ivpL2}
\begin{cases}
 i \partial_t v_{0} +(v_{0})_{xx} \pm |v_{0}|^{\alpha-1}v_{0}=0 \\
v_{0}(\cdot,0)=\phi_{0}\in L^2.   
\end{cases}        
    \end{eqnarray}
We recall that Tsutsumi \cite{YTsutsumi} proved that \eqref{ivpL2} has a unique global solution $v_{0}$ in the space
\begin{eqnarray}\label{gwpl2}
\begin{cases}
  v_{0} \in C(\R, L^2) \cap L_{loc}^{q(r)}(\R,L^{r} ),\\
\sup_{(q(r),r)\in \mathcal{A}}\|v_{0}\|_{L_{loc}^{q(r)} L^{r}} \lesssim \|\phi_{0}\|_{L^{2}},
\end{cases}
\end{eqnarray}
where $(q(r),r) \in \mathcal{A}$. Considering Remark \ref{chipAp}, we also have $ v_{0}\in L_{loc}^{Q_{p_{0}}(r)}(\R,L^{r})$ since $v_{0} \in L_{loc}^{q(r)}(\R,L^{r})$.
We now turn to the modified \eqref{NLS} associated with the evolution of $\psi_{0}$: 
\begin{eqnarray}\label{ivpMod}
\begin{cases}
 i \partial_t w_{0} +(w_{0})_{xx} \pm (|w_{0}+v_{0}|^{\alpha-1}(w_{0}+v_{0})-|v_{0}|^{\alpha-1}v_{0})=0 \\
w_{0}(\cdot,0)=\psi_{0}\in M^{p_{0},p_{0}'} .
\end{cases}
\end{eqnarray}
By Duhamel's principle, the above I.V.P. \eqref{ivpMod} is equivalent to the integral equation:
\begin{align}
    w_{0} &= e^{i t \partial_x^2}\psi_{0}\pm i  \int_{0}^{t}  e^{i(t-\tau)\partial_x^2}\left( \big| w_{0}+v_{0}\big|^{\alpha-1} (w_{0}+v_{0}) - |v_{0}|^{\alpha-1} v_{0}\right)(\tau) \, d\tau\nonumber \\
    &= e^{i t \partial_x^2}\psi_{0} \pm i  \int_{0}^{t}  e^{i(t-\tau)\partial_x^2}G(v_{0},w_{0},0) \, d\tau.\label{w01}
\end{align}

 Since $v_0$ is globally defined in appropriate spaces as seen in \eqref{gwpl2}, we only need to examine the time interval of existence for $w_0$ to establish the desired existence.
 In the next proposition,
 we will construct a local solution to \eqref{w01} on a small interval $[0,\delta_{N}]$ defined as 
\begin{equation}\label{TN}
    \delta_{N}\sim N^{-\frac{4(\alpha-1)}{5-\alpha}\gamma}.
\end{equation}

\begin{prop}\label{w0exist} Let $\phi_{0} \in L^2$ and $ \psi_{0}\in M^{p_{0},p_{0}'}$.  Assume that $1<\alpha<5$ and $p_{0}$ are as in Theorem \ref{mr} \eqref{lwp}. 
Then, the integral equation \begin{equation}\label{w012}
    e^{i t \partial_x^2}\psi_{0} \pm i \int_{0}^{t}  e^{i(t-\tau)\partial_x^2}G(v_{0},w_{0},0) \, d\tau
    \end{equation}
    has a unique solution in  $Y(\delta_{N}).$
\end{prop}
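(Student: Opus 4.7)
The plan is to set up a standard Banach fixed-point argument for the operator
\begin{equation*}
\Lambda_{2}(w_{0})(t) := e^{it\partial_x^2}\psi_{0} \pm i \int_{0}^{t} e^{i(t-\tau)\partial_x^2} G(v_{0}, w_{0}, 0)(\tau)\, d\tau
\end{equation*}
on a small closed ball in $Y(\delta_{N})$, exploiting the smallness $\|\psi_0\|_{M^{p_0,p_0'}}\lesssim N^{-1}$ from \eqref{dp} to absorb the large factor coming from $v_0$. For the linear part, I would apply the generalized Strichartz estimate \eqref{gse3} together with the embedding $M^{p_0,p_0'}\hookrightarrow \widehat{L}^{p_0}$ from \eqref{embeddings} to get
\begin{equation*}
\|e^{it\partial_x^2}\psi_0\|_{Y(T)} \lesssim \|\psi_0\|_{\widehat{L}^{p_0}} \lesssim \|\psi_0\|_{M^{p_0,p_0'}} \lesssim N^{-1},
\end{equation*}
uniformly in $T$. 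This suggests working on the ball $B(a,\delta_N) := \{w_0 \in Y(\delta_N): \|w_0\|_{Y(\delta_N)} \leq a\}$ with $a = C N^{-1}$ for a suitable constant.

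For the Duhamel term, the core tool is Lemma \ref{lemlwp} applied with $(u,v,w) = (v_{0}, w_{0}, 0)$, which gives
\begin{equation*}
\Bigl\| \int_0^t e^{i(t-\tau)\partial_x^2} G(v_0,w_0,0)\, d\tau \Bigr\|_{Y(T)} \lesssim \|w_0\|_{Y(T)} \Bigl( T^{1-\frac{\alpha-1}{4}} \|v_0\|_{Z(T)}^{\alpha-1} + T^{1-\frac{\alpha-1}{2p_0}} \|w_0\|_{Y(T)}^{\alpha-1} \Bigr).
\end{equation*}
The global $L^2$ solution $v_0$ satisfies $\|v_0\|_{Z(T)} \lesssim \|\phi_0\|_{L^2} \lesssim N^{\gamma}$ by \eqref{gwpl2} and \eqref{dp}. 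Substituting $a\sim N^{-1}$ and $\|v_0\|_{Z(T)}\lesssim N^\gamma$, the self-map condition $\Lambda_2(B(a,\delta_N))\subset B(a,\delta_N)$ reduces to requiring
\begin{equation*}
\delta_N^{1-\frac{\alpha-1}{4}} N^{\gamma(\alpha-1)} \lesssim 1 \qquad \text{and} \qquad \delta_N^{1-\frac{\alpha-1}{2p_0}} N^{-(\alpha-1)} \lesssim 1.
\end{equation*}
The first is the binding constraint and, since $\alpha<5$ makes the exponent $1-\frac{\alpha-1}{4}$ positive, it forces exactly $\delta_N \sim N^{-\frac{4(\alpha-1)}{5-\alpha}\gamma}$, in agreement with \eqref{TN}. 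The second holds automatically once $N$ is large enough because $\delta_N \to 0$ and $a \to 0$ as $N\to\infty$.

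For the contraction, given $w_0,\tilde w_0\in B(a,\delta_N)$, writing $G(v_0,w_0,0)-G(v_0,\tilde w_0,0)$ and invoking Lemma \ref{lemlwp} once more (now with $v=w_0$, $w=\tilde w_0$) yields the same structural bound with $\|w_0-\tilde w_0\|_{Y(\delta_N)}$ as a factor, so the identical choice of $\delta_N$ makes $\Lambda_2$ a strict contraction, and uniqueness of the fixed point follows. The delicate step, and the one I expect to require the most care, is the bookkeeping of the interaction between the large $L^2$ factor $\|v_0\|_{Z(T)}^{\alpha-1}\lesssim N^{\gamma(\alpha-1)}$ coming from the linear evolution of the rough part and the small factor $N^{-1}$ of $\psi_0$: the iterative scheme of Theorem \ref{mr} succeeds precisely because the resulting $\delta_N$ in \eqref{TN} allows $v_0+w_0$ to remain close to $v_0$ in $L^2$ at time $\delta_N$, which is what justifies restarting the decomposition and eventually determines the admissible range of $p$ in \eqref{pminglobal}.
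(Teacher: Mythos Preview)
Your proposal is correct and follows essentially the same approach as the paper: a Banach fixed-point argument on a ball of radius $\sim N^{-1}$ in $Y(\delta_N)$, with the linear part controlled via \eqref{gse3} and \eqref{embeddings}, the Duhamel term via Lemma~\ref{lemlwp}, and the choice $\delta_N\sim N^{-\frac{4(\alpha-1)}{5-\alpha}\gamma}$ emerging from balancing $\delta_N^{1-\frac{\alpha-1}{4}}\|v_0\|_{Z(\delta_N)}^{\alpha-1}\lesssim 1$. The paper carries out exactly this computation (with the operator named $\Gamma$ and the ball written as $\{ \|w_0\|_{Y(\delta_N)}\le A/N\}$), and the contraction step likewise invokes Lemma~\ref{lemlwp} with $(u,v,w)=(v_0,w_{01},w_{02})$.
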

\begin{proof}
We use the fixed point theorem in the complete metric space $B_{A}$ defined as
$$B_{A}:=\{w_{0} \in Y(\delta_{N}
   ):\|w_{0}\|_{Y(\delta_{N}
   )}\leq A/N \}$$ for some $A>0.$
  For this, we claim that the operator
    $$\Gamma(w_{0}):=e^{i t \partial_x^2}\psi_{0}\pm i \int_{0}^{t}  e^{i(t-\tau)\partial_x^2}G(v_{0},w_{0},0) \, d\tau$$
    is a well-defined contraction map on $B_{A}$.\\
   Considering \eqref{gse3}, Lemma \ref{lemlwp} and \eqref{gwpl2}, for any $w_{0}\in B_{A},$ we have 
    \begin{align}
    \|\Gamma(w_{0})\|_{Y(\delta_{N})} &\leq \|e^{i t \partial_x^2}\psi_{0}\|_{Y(\delta_{N})}+\left\| \int_{0}^{t} e^{i(t-\tau)\partial_x^2}G(v_{0},w_{0},0) \, d\tau \right\|_{Y(\delta_{N})}\nonumber \\
   \lesssim & \|\psi_{0}\|_{M^{p_{0},p_{0}'}}+\delta_{N}^{1-\frac{\alpha-1}{4}} \|v_{0}\|^{\alpha-1}_{Z(\delta_{N}
   )} \|w_{0}\|_{Y(\delta_{N}
   )}+
     \delta_{N}^{1-\frac{\alpha-1}{2{p_{0}}}}\|w_{0}\|^{\alpha}_{Y(\delta_{N}
   )}\nonumber\\
     \lesssim & \frac{1}{N} + 2N^{\frac{-4(\alpha-1)}{5-\alpha}\left(\frac{5-\alpha}{4}\right)\gamma}N^{(\alpha-1)\gamma}\frac{1}{N} \leq\frac{3}{N}.\nonumber
\end{align}
The last inequality is due to the bounds \eqref{dp} and \eqref{TN}. Thus, $\Gamma(w_{0}) \in B_A.$ Similarly, we can prove that $\Gamma(w)$ is a contraction map. For $w_{01},w_{02} \in B_{A},$ we have 
\begin{align*}
   \hspace{-4cm} \|\Gamma(w_{01})-\Gamma(w_{02})\|_{Y(\delta_{N})} &= \left\| \int_{0}^{t} e^{i(t-\tau)\partial_x^2}G(v_{0},w_{01},w_{02}) \, d\tau \right\|_{Y(\delta_{N})}\nonumber\end{align*}
\begin{align*}
    \lesssim& \|w_{01}-w_{02}\|_{Y(\delta_{N})}\left(\delta_{N}^{1-\frac{\alpha-1}{4}}\|v_{0}\|_{Z(\delta_{N})}^{\alpha-1}
     +\delta_{N}^{1-\frac{\alpha-1}{2p_{0}}}(\|w_{01}\|_{Y(\delta_{N})}^{\alpha-1} + \|w_{02}\|_{Y(\delta_{N})}^{\alpha-1})\right) \nonumber \\
      \lesssim &\|w_{01}-w_{02}\|_{Y(\delta_{N})}.
\end{align*}
Therefore, the contractivity of $\Gamma$ follows similarly \footnote{$C$ is chosen small enough such that all requirements are fulfilled.}.
Using the Banach fixed-point theorem, we get a unique fixed point $w_{0}$ to the integral equation \eqref{w012} on the time-interval $[0,\delta_{N}].$
\end{proof} 
Consequently, we have a local solution $ w_{0} \in L^{Q_{p_{0}}(r)}_{\delta_{N}}L^{r}$ on the time interval $[0,\delta_{N}]$. Thus, based on \eqref{gwpl2}, Proposition \ref{w0exist} and \eqref{Mpp1},  we conclude $$v_0 + (w_0-e^{i t \partial_x^2} \psi_0) \in  L^{Q_{p_{0}}(r)}_{\delta_{N}}L^{r}$$ and $$
    e^{i t \partial_x^2} \psi_0 \in L^{\infty}_{\delta_{N}}M^{p_{0},p_{0}'} .$$

Next, we claim that the local solution  $u(t),\;t\in [0,\delta_{N}]$ obtained in the previous step can be extended to the interval $[\delta_{N},2\delta_{N}]$.
Consider the new initial data decomposition as the sum of the following two functions:    
\[ \phi_{1}=v_{0}(\delta_{N}) \pm i \int_{0}^{\delta_{N}}  e^{i(t-\tau)\partial_x^2}G(v_{0},w_{0},0) \, d\tau\quad \text{and} \quad \psi_{1} =e^{i\delta_{N}\partial_x^2}\psi_{0}.\]
In order to continue with the iteration, we need $\phi_{1}\in L^2$ and $\psi_{1}\in M^{p_{0},p_{0}'}$  satisfying \eqref{dp}. Therefore, we can obtain a solution to the I.V.P. corresponding to the initial value $\phi_{1}+\psi_{1}$ on $t\in [\delta_{N},2\delta_{N}]$,
given as
\begin{eqnarray}\label{ivp2iter}
\begin{cases}
 i \partial_t u +u_{xx} \pm |u|^{\alpha-1}u=0 \\
u(\delta_{N})=\phi_{1}+\psi_{1}.
\end{cases}        
    \end{eqnarray}
Based on \eqref{Mpp1}, we have
$\psi_{1} \in M^{p_{0},p_{0}'} $. Specifically, we have
$$\|\psi_{1}\|_{M^{p_{0},p_{0}'}} \leq \sup_{t\in [0,\delta_{N}]}\|e^{i t \partial_x^2}\psi_{0}\|_{M^{p_{0},p_{0}'}} \lesssim \|\psi_{0}\|_{M^{p_{0},p_{0}'}} \lesssim 1/N.$$  
We now proceed to show that $\|\phi_{1}\|_{L^{2}} \lesssim N^{\gamma}$ using the following proposition.  
\begin{prop}\label{winfty2} Let $v_{0}$ be the global solution with initial value $\phi_{0} \in L^2,\; w_{0}$ as defined in \eqref{w01} and $ \psi_{0}\in M^{p_{0},p_{0}'}$. Then, for any $N>1$, we have
$$\sup_{[0,\delta_{N}]}\|w_{0}-e^{i t \partial_x^2}\psi_{0}\|_{L^{2}}\lesssim N^{-1+\frac{2(\alpha-1)}{5-\alpha}\left(\frac{1}{p_{0}}-\frac{1}{2}\right)\gamma}. $$
\end{prop}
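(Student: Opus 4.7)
The plan is to exploit the integral representation of $w_{0}$ from \eqref{w01}: by definition one has
\[
w_{0}(t) - e^{it\partial_{x}^{2}}\psi_{0} \;=\; \pm i \int_{0}^{t} e^{i(t-\tau)\partial_{x}^{2}} G(v_{0}, w_{0}, 0)(\tau)\,d\tau,
\]
so I take the supremum over $t \in [0,\delta_{N}]$ of the $L^{2}$-norm and apply Lemma \ref{lemlwp2} with $u = v_{0}$ and $v = w_{0}$. (The prerequisite $p_{0} > 2\alpha/5$ of that lemma is precisely one of the conditions imposed in \eqref{pminloc}.) This produces
\[
\sup_{[0,\delta_{N}]}\|w_{0}-e^{it\partial_{x}^{2}}\psi_{0}\|_{L^{2}} \lesssim \delta_{N}^{1-\frac{\alpha-1}{4}-\frac{1}{2}(\frac{1}{p_{0}}-\frac{1}{2})}\|v_{0}\|_{Z(\delta_{N})}^{\alpha-1}\|w_{0}\|_{Y(\delta_{N})} + \delta_{N}^{1-\frac{\alpha-1}{2p_{0}}-\frac{1}{2}(\frac{1}{p_{0}}-\frac{1}{2})}\|w_{0}\|_{Y(\delta_{N})}^{\alpha}.
\]

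Next I insert the three ingredients already at hand: from \eqref{gwpl2} combined with $\|\phi_{0}\|_{L^{2}}\lesssim N^{\gamma}$ in \eqref{dp}, one has $\|v_{0}\|_{Z(\delta_{N})}\lesssim N^{\gamma}$; from Proposition \ref{w0exist}, $\|w_{0}\|_{Y(\delta_{N})}\lesssim 1/N$; and $\delta_{N} \sim N^{-4(\alpha-1)\gamma/(5-\alpha)}$ by \eqref{TN}. Using the identity $1-\tfrac{\alpha-1}{4}-\tfrac{1}{2}(\tfrac{1}{p_{0}}-\tfrac{1}{2}) = \tfrac{6-\alpha}{4}-\tfrac{1}{2p_{0}}$, the $N$-exponent of the first summand telescopes to
\[
-\frac{(\alpha-1)(6-\alpha)\gamma}{5-\alpha} + \frac{2(\alpha-1)\gamma}{(5-\alpha)p_{0}} + (\alpha-1)\gamma - 1 \;=\; -1 + \frac{2(\alpha-1)\gamma}{5-\alpha}\Bigl(\frac{1}{p_{0}}-\frac{1}{2}\Bigr),
\]
which is exactly the target exponent.

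The main bookkeeping step is verifying that the second summand, which carries the higher power $\|w_{0}\|^{\alpha}$, does not dominate. A parallel computation yields its $N$-exponent as $-\alpha - \tfrac{(\alpha-1)\gamma}{5-\alpha}\bigl(5-\tfrac{2\alpha}{p_{0}}\bigr)$, and subtracting it from the first exponent gives
\[
(\alpha-1)\left[1 + \frac{\gamma}{5-\alpha}\Bigl(4 - \frac{2(\alpha-1)}{p_{0}}\Bigr)\right].
\]
This quantity is strictly positive in every one of the three regimes of $\alpha$ listed in \eqref{pminloc}, since in each case the assumed lower bound on $p_{0}$ implies $p_{0} > (\alpha-1)/2$ (equivalently $4 - 2(\alpha-1)/p_{0} > 0$), using also $\alpha < 5$. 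Hence the second summand is absorbed by the first, completing the proof.
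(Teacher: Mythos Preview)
Your proof is correct and follows exactly the same route as the paper: identify $w_{0}-e^{it\partial_x^2}\psi_{0}$ with the Duhamel integral, apply Lemma \ref{lemlwp2}, and plug in the sizes $\|v_{0}\|_{Z(\delta_{N})}\lesssim N^{\gamma}$, $\|w_{0}\|_{Y(\delta_{N})}\lesssim N^{-1}$, $\delta_{N}\sim N^{-4(\alpha-1)\gamma/(5-\alpha)}$. The paper simply asserts the resulting bound, whereas you carry out the exponent arithmetic in full and explicitly confirm that the second summand is dominated by the first via the observation $p_{0}>(\alpha-1)/2$; this extra bookkeeping is accurate and makes the argument self-contained.
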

\begin{proof}
Using Lemma \ref{lemlwp2}, \eqref{gwpl2}, \eqref{dp} and Proposition \ref{w0exist}, we have
\begin{align*}
\|w_{0}-e^{i t \partial_x^2}\psi_{0}\|_{L^{\infty}_{\delta_{N}}L^{2}} = & \left|\left|\int_{0}^{t} e^{i(t-\tau)\partial_x^2}G(v_{0}, w_{0},0) \, d\tau \right|\right|_{L^{\infty}_{\delta_{N}}L^{2}}\\
\leq & \delta_{N}^{1-\frac{\alpha-1}{4}-\frac{1}{2}\left(\frac{1}{p_{0}}-\frac{1}{2}\right)} \|v_{0}\|_{Z(\delta_{N})}^{\alpha-1}\|w_{0}\|_{Y(\delta_{N})} + \delta_{N}^{1-\frac{\alpha-1}{2p_{0}}-\frac{1}{2}\left(\frac{1}{p_{0}}-\frac{1}{2}\right)}\|w_{0}\|_{Y(\delta_{N})}^{\alpha}\\
\lesssim & N^{-1+\frac{2(\alpha-1)}{5-\alpha}\left(\frac{1}{p_{0}}-\frac{1}{2}\right)\gamma} .
\end{align*}
\end{proof}
Taking into account \eqref{gwpl2}, \eqref{dp} and Proposition \ref{winfty2}, we have
\begin{equation*}
  \|\phi_{1}\|_{L^2} \lesssim N^{\gamma}+N^{-1+\frac{2(\alpha-1)}{5-\alpha}(\frac{1}{p_{0}}-\frac{1}{2})\gamma}. 
\end{equation*}
For $N$ sufficiently large and $(\alpha-1) /2 <p_{0}$, the right-hand side of the above inequality can be further bounded as follows: $$N^{\gamma}+N^{-1+\frac{2(\alpha-1)}{5-\alpha}\left(\frac{1}{p_{0}}-\frac{1}{2}\right)\gamma} \lesssim 2N^{\gamma}.$$
Consequently, the I.V.P. \eqref{ivp2iter}
has a local solution in $ [\delta_{N}, 2\delta_{N}].$
More generally, 
We can extend the time interval by using a similar iterative scheme. 
Let $K$ be the maximal number of iterations possible. We introduce an iterative initial data decomposition as $\phi_{k}+\psi_k$ for $1 \leq k \leq K$ (for $k=0,\; \phi_{0}+\psi_0$ are defined in \eqref{dp}) where
    \begin{eqnarray}\label{newidk}
    \begin{cases}
 \phi_{k}= v_{k-1}(k\delta_{N}) \pm i \displaystyle \int_{0}^{k\delta_{N}} e^{i(t-\tau) \partial_x^2}G(v_{k-1},w_{k-1},0) d\tau,\\
\psi_{k}=e^{ik\delta_{N}\partial_x^2}\psi_{0}. 
 \end{cases}
 \end{eqnarray}
Solving \eqref{NLS} with  $\phi_{k}\in L^{2}$ yields a solution $v_{k}$ (as described in \eqref{gwpl2} for $k=0$). Further, solving modified NLS \eqref{ivpMod}  with initial value $\psi_{k}$ and merging $v_{k},$ we get a solution to \eqref{NLS} in the interval $[k\delta_{N},(k+1)\delta_{N}].$\\
We repeat the iteration by redefining $\phi_{k+1}$ and $\psi_{k+1},$ and maintaining control of the nonlinear interaction term $(w_{k}((k+1)\delta_{N})-e^{i(k+1)\delta_{N}\partial_x^2}\psi_{0})$ as done in Proposition \ref{winfty2} for $k=0$ case, so that it can be absorbed into $v_{k}((k+1)\delta_{N})$ without losing  mass conservation of $\phi_{k+1}.$\\
The next goal is to determine a bound on $ K$. For that, we check for the bound of $\phi_{K}$ and $\psi_{K}$ in $L^{2}$ and $M^{p_{0},p_{0}'}$, 
 respectively.
 Observe that, using \eqref{Mpp1}, for any $k,$ we have 
 \begin{equation*}
\|\psi_{k}\|_{M^{p_{0},p_{0}'}}=\|e^{ik\delta_{N}\partial_x^2}\psi_{0}\|_{M^{p_{0},p_{0}'}}\lesssim \sup_{t\in [0,k\delta_{N}]}\|e^{i t \partial_x^2}\psi_{0}\|_{M^{p_{0},p_{0}'}} \lesssim \|\psi_{0}\|_{M^{p_{0},p_{0}'}} \lesssim 1/N.
 \end{equation*}
 Using \eqref{gwpl2} and Proposition \ref{winfty2} for the $K$th iteration, we have
\begin{equation}\label{KL2}
    \|\phi_{K}\|_{L^{2}} \lesssim N^{\gamma}+KN^{-1+\frac{2(\alpha-1)}{5-\alpha}(\frac{1}{p_{0}}-\frac{1}{2})\gamma}.
\end{equation}
  Note that the solution can be extended to the $K$th iteration until the $L^2$-norm of the Duhamel term accumulates up to the size $\sim N^{\gamma}$. That is
$$N^{\gamma}+KN^{-1+\frac{2(\alpha-1)}{5-\alpha}(\frac{1}{p_{0}}-\frac{1}{2})\gamma} \lesssim 2 N^{\gamma}.$$
 From the above inequality, we can derive the maximum number of iterations possible. Consequently, one can establish a local solution of I.V.P. \eqref{NLS} to the time given as 
 \begin{equation}\label{TN3}
    T_{N}=K\delta_{N}=CN^{1+\frac{2}{5-\alpha}\left(4-2\alpha-\frac{\alpha-1}{p_{0}}\right)\gamma}.
 \end{equation} 
 Thus, for any sufficiently large $N>1$, there is a local solution, denoted by $u_{N}$, of \eqref{NLS} well defined on the time interval $[0, T_N]$ for each $N$. 
 \textbf{This concludes the proof of Theorem \ref{mr} \eqref{lwp}}.\\

Since $N$ can be chosen to be arbitrarily large and in the case the exponent of $N$ in \eqref{TN3} is positive, we can obtain a global solution. Thus, we impose the following condition on $p_{0}$ given as
\begin{equation}\label{assumption}
      \frac{2}{5-\alpha} \big(4-2\alpha-\frac{\alpha-1}{p_{0}} \big)\gamma >-1.
  \end{equation}

 Substituting the lower bound of $p_{0}$ as given in \eqref{pminloc}, in \eqref{assumption}, we have an upper bound on $\gamma$ given as
 \begin{gather}\label{gammabound}
    \gamma_{\max}=\begin{cases}
    \infty &\text{if}\quad \alpha \in (1,\frac{3+\sqrt{57}}{6})\\
     \frac{(\alpha+1)(5-\alpha)}{2(3\alpha^2-3\alpha-4)}\quad&\text{if}\quad \alpha \in [\frac{3+\sqrt{57}}{6},3)\\
     \frac{2(5-\alpha)}{11\alpha-19}\quad&\text{if}\quad \alpha \in [3,\frac{10}{3}]\\
    \frac{\alpha(5-\alpha)}{4\alpha^2-3\alpha-5)}\quad&\text{if}\quad \alpha \in (\frac{10}{3},5).
    \end{cases}
\end{gather}
Observe that $\gamma$ as given in \eqref{betap} is a decreasing function of $p$. Hence, the condition $\gamma \in (0,\gamma_{\max})$
is equivalent to the range of $p \in (p_{\min},2)$. Hence, $p_{\min}$ can be obtained by substituting $\gamma_{\max}$
into the expression \eqref{betap} as 
\begin{gather}\label{pmin}p_{\min}=
    \begin{cases}
     \frac{\alpha+1}{\alpha} \quad&\text{if}\quad \alpha \in (1,\frac{3+\sqrt{57}}{6})\\
     \frac{5\alpha+3}{2(\alpha+2)} \quad&\text{if}\quad \alpha \in [\frac{3+\sqrt{57}}{6},3)\\
     \frac{9(\alpha-1)}{2(2\alpha-1)}\quad&\text{if}\quad \alpha \in [3,\frac{10}{3}]\\
    \frac{(\alpha-1)(3\alpha+5)}{2(\alpha^2-2\alpha+5)}\quad&\text{if}\quad \alpha \in (\frac{10}{3},5).
    \end{cases}
    \end{gather} 
This completes the proof of Theorem \ref{mr} \eqref{gwp}.
\qed

\subsection{Proof of Corollary \ref{globalprnls}}
 We first establish the following proposition.
\begin{prop}
[uniqueness]\label{uniqueNLS}
   Let $u_{1}$ be the local solution of \eqref{NLS} obtained in Theorem \ref{betterregularityNLS} and $u_{2}$ be the global solution of \eqref{NLS} obtained in Theorem \ref{mr}. Assume $u_{1}(0)=u_{2}(0):= u_{0} \in M^{p,p'}$ with $p$ satisfying \eqref{pminglobal}. Then, for $T_{1} \in (0,T^{*})$, we have $$u_{1}(x,t)=u_{2}(x,t),\quad \forall  (x,t)\in \R \times  [0, T_{1}].$$ 
\end{prop}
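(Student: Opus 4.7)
The plan is to show that $\omega:=u_1-u_2$ vanishes on $[0,T_1]$ by a self-improving estimate in a common Strichartz space, followed by a standard iteration. First, since $p$ satisfies \eqref{pminglobal} with strict inequality, I would choose an auxiliary exponent $p_0\in(p_{\min},p)$ close enough to $p$ that $p_0$ simultaneously obeys \eqref{pminloc} (so that Theorem \ref{mr} applies with this $p_0$) and the hypothesis \eqref{thc} of Lemma \ref{lemlwp}; in particular $(Q_{p_0}(r),r)\in\widehat{\mathcal{X}}(p_0)$ by Remark \ref{conditionsonp0}. This $p_0$ serves as a common reference exponent for both solutions.

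Next, I would place $u_1$ and $u_2$ in the shared space $Y(T):=L^{Q_{p_0}(r)}_{T}L^{r}$. For $u_2$ this is automatic from Theorem \ref{mr}, since each summand in its decomposition belongs to $L^{Q_{p_0}(r)}_{loc}(\R,L^{r})$. For $u_1\in L^{Q_p(r)}_{T_1}L^{r}$, the inequality $p_0<p$ forces $Q_{p_0}(r)<Q_p(r)$, so H\"older's inequality in time gives
\[
\|u_1\|_{L^{Q_{p_0}(r)}_{T}L^{r}}\leq T^{\frac{1}{Q_{p_0}(r)}-\frac{1}{Q_{p}(r)}}\|u_1\|_{L^{Q_{p}(r)}_{T}L^{r}}\qquad(T\leq T_1),
\]
so $u_1\in Y(T_1)$ as well. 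In particular, $\|u_1\|_{Y(T_1)}$ and $\|u_2\|_{Y(T_1)}$ are bounded by some finite constant $M$.

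Since both solutions satisfy \eqref{DuhamelP} with the same initial datum,
\[
\omega(t)=\mp i\int_{0}^{t}e^{i(t-s)\partial_{x}^{2}}G(0,u_1,u_2)(s)\,ds.
\]
Applying Lemma \ref{lemlwp} with $u=0$, $v=u_1$ and third argument $u_2$, on a subinterval of length $\tau$ (after a harmless time shift, which commutes with the free evolution), yields
\[
\|\omega\|_{Y(\tau)}\leq C\,\tau^{1-\frac{\alpha-1}{2p_{0}}}\bigl(\|u_1\|_{Y(\tau)}^{\alpha-1}+\|u_2\|_{Y(\tau)}^{\alpha-1}\bigr)\|\omega\|_{Y(\tau)}.
\]
Because $1-\frac{\alpha-1}{2p_{0}}>0$ (from $p_0>(\alpha-1)/2$ in \eqref{thc}) and the $Y$-norms are controlled by $M$, choosing $\tau$ small depending only on $M$ makes the prefactor strictly less than $1$, forcing $\|\omega\|_{Y(\tau)}=0$. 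Since $\tau$ is uniform, iterating on consecutive subintervals of length $\tau$ covers $[0,T_1]$ in finitely many steps and yields $u_1=u_2$ almost everywhere on $[0,T_1]$.

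The hardest point is the compatibility of function spaces: $u_1$ is produced in the finer space $L^{Q_{p}(r)}_{T}L^{r}$, whereas $u_2$ only belongs to the coarser $L^{Q_{p_0}(r)}_{T}L^{r}$, and the nonlinear estimate in Lemma \ref{lemlwp} is calibrated to the exponent $p_0$. Reconciling the two hinges on being able to select $p_0\in(p_{\min},p)$ that validates \eqref{pminloc}, \eqref{thc}, and Remark \ref{conditionsonp0} simultaneously while preserving the strict inequality $p_0<p$ needed for the H\"older embedding; this is possible precisely because $p$ satisfies \eqref{pminglobal} with strict inequality, which explains why this hypothesis reappears in Corollary \ref{globalprnls}. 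Once $p_0$ is fixed, the rest is a standard contraction/iteration driven by Lemma \ref{lemlwp}.
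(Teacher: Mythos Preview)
Your proposal is correct and follows essentially the same route as the paper's proof: both place $u_1$ and $u_2$ in the common space $Y(T)=L^{Q_{p_0}(r)}_T L^r$ via the inclusion $L^{Q_p(r)}_T L^r\subset L^{Q_{p_0}(r)}_T L^r$ coming from $Q_{p_0}(r)<Q_p(r)$, apply Lemma \ref{lemlwp} (with $u=0$) to the Duhamel difference to obtain a self-bounding inequality with a small prefactor $T_0^{1-\frac{\alpha-1}{2p_0}}$, and then iterate over subintervals of uniform length to reach $T_1$. The only cosmetic difference is that you discuss the selection of $p_0$ and the verification of \eqref{thc} explicitly, whereas the paper simply takes the $p_0$ already furnished by Theorem \ref{mr} (which indeed satisfies \eqref{thc} since \eqref{pminloc} dominates the bounds there).
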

\begin{proof}
By Theorem \ref{betterregularityNLS} \footnote{Theorem \ref{mr} imposes a stronger restriction on the exponent $p$ than Theorem \ref{betterregularityNLS}. Therefore, by assuming $p$ satisfying \eqref{pminglobal}, it automatically satisfies \eqref{ppor}.}, there exist $T_1 \in (0,T^{*})$ such that  
\[u_1\in C_{T_1}M^{p,p'}\cap L^{Q_p(r)}_{T_1}L^r.\]
As a result of Theorem \ref{mr}, for any $T>0$,
\[u_2\in \left( L^{\infty}_{T}M^{p_0,p_0'}\cap L^{Q_{p_0}(r)}_{T}L^r\right) + \left( L^{\infty}_{T}L^2\cap L^{Q_{p_0}(r)}_{T}L^r\right) .\] 
Notice that  we cannot compare $M^{p,p'}$ and $M^{p_0, p'_0}$ spaces.
Since $p_{0}<p$, we have  $$Q_{p_0}(r) < Q_p(r)$$ 
This enables us to compare generalised Strichartz spaces. And, we have
\[L^{Q_p(r)}_{T_{1}}L^r \subset L^{Q_{p_0}(r)}_{T_{1}}L^r.\]
Hence, both solutions  $u_1, u_2 \in L^{Q_{p_0}(r)}_{T_{1}}L^r=:Y(T_{1}).$ 
By Duhamel’s formula \eqref{DuhamelP} and Lemma \ref{lemlwp}, for $T_{0}\in (0,T_{1}]$, we obtain
\begin{align*}
 \|u_{1}-u_{2}\|_{Y(T_{0})}&=\left|\left|\int_{0}^{t} e^{i(t-\tau)\partial_{x}^{2}}(|u_{1}|^{\alpha-1}u_{1}-|u_{2}|^{\alpha-1}u_{2})(\tau)d\tau \right|\right|_{Y(T_{0})} \\
     & \lesssim   T_{0}^{1-\frac{\alpha-1}{2p_{0}}} \|u_{1}-u_{2}\|_{Y(T_{0})}
   \left(\|u_{1}\|_{Y(T_{1})}^{\alpha-1} + \|u_{2}\|_{Y(T_{1})}^{\alpha-1}\right)\\
   &= 2 (\eta_{T_{1}})^{\alpha-1} T_{0}^{1-\frac{\alpha-1}{2p_{0}}} \|u_{1}-u_{2}\|_{Y(T_{0})}
    \end{align*}
where $\eta_{T_{1}}:=\max \{\|u_{1}\|_{Y(T_{1})}, \|u_{2}\|_{Y(T_{1})}\} $. 
Now, taking $T_{0}>0$ sufficiently small so that
  \begin{equation}\label{necessaryNLS}
      2 (\eta_{T_{1}})^{\alpha-1} T_{0}^{1-\frac{\alpha-1}{2p_{0}}}  \leq \frac{1}{2},
   \end{equation}   
we have $$\|u_{1}-u_{2}\|_{Y(T_{0})} \leq \frac{1}{2}\|u_{1}-u_{2}\|_{Y(T_{0})}.$$
This implies,   $$u_{1}=u_{2},\quad \forall (x,t)\in \R \times  [0,T_{0}].$$
It follows from the above difference estimate that $u_1(t) = u_2(t)$ on $t \in [0,T_{0}]$ for sufficiently small
$T_{0} \in (0,T_{1}]$. In a similar manner, we can observe
that  $u_1(t) = u_2(t)$ for all $t\in [T_{0},2T_{0}]$, since the time interval $T_{0}$ can be determined depending only on $\eta_{T_{1}}$, given as \eqref{necessaryNLS}. Repeating this continuity argument, we can establish uniqueness on $[0, T_{1}]$. This concludes the proof.
\end{proof}

\begin{proof}[\textbf{Proof of Corollary \ref{globalprnls}}]
Assume $u_{0} \in M^{p,p'}$ with $p$ satisfying \eqref{pminglobal}. By the blow-up alternative in Theorem \ref{betterregularityNLS}, extending the local solution to a global one is equivalent to proving that, for all finite $T>0$,
\begin{equation}\label{finiteblowup2}
    \sup_{0 \leq t < T} \|u(t)\|_{M^{p,p'}} < \infty.
\end{equation}
In Proposition \ref{uniqueNLS}, we have shown that the solution obtained in Theorem \ref{betterregularityNLS} and Theorem \ref{mr} equals almost everywhere. We denote it by $ u$.\\
Considering Theorem \ref{mr}, for any arbitrary $T>0$, \eqref{NLS} admits a global solution
\begin{align}
   \vspace{0.2cm} u \in & \left(L^{\infty}_{T} L^2\cap L^{Q_{p_{0}}(r)}_{T} L^{r}\right) +\left(L^{\infty}_{T}M^{p_{0},p_{0}'} \cap L^{Q_{p_{0}}(r)}_{T} L^{r}\right) \nonumber\\ 
    & \subset L^{Q_{p_{0}}(r)}_{T}L^{r} \subset L^{\frac{8r}{3r-4}}_{T} L^{r}.\label{essential}
     \end{align} 
The last embedding holds good since $4/3<p_{0}$. Denote $$Y_{2}(T):=L^{\frac{8r}{3r-4}}_{T} L^{r}.$$
Considering \eqref{Mpp1} for the homogeneous part of $u$, it is enough to estimate the $M^{p,p'}$ norm of the nonlinear term of $u$, (cf. \eqref{DuhamelP}) to claim \eqref{finiteblowup2}.\\
Next, we bound the $M^{p,p'}$-norm of the Duhamel operator, in terms of $Y_{2}(T)$, taking advantage of  \eqref{essential}. 
To this end, define $b=b(r)$ as
    \begin{equation}\label{B2}
        \frac{1}{b}=\frac{3}{2}-\frac{\alpha}{2r}-\frac{1}{2p}=\frac{12p-4-3\alpha p}{8p}+\frac{\alpha(3r-4)}{8r}.
      \end{equation}
      Taking into account the hypothesis on $p$ in Theorem \ref{betterregularityNLS}, as discussed in Remark \ref{why1}, and $p$ satisfies \eqref{pminglobal}, it follows that $$(b',(r/\alpha)') \in \widehat{\mathcal{X}}(p).$$ 
      Following the similar arguments as to derive  \eqref{integral21}, invoking \eqref{Mpp1}, \eqref{dsembeddings} and \eqref{weightest} with $(\sigma',\rho')=(b',(r/\alpha)')\in \widehat{\mathcal{X}}(p)$ with $b$ given as \eqref{B2} and H\"older's inequality with H\"older condition \eqref{B2}, to obtain
   \begin{align}
    \left|\left| \int_{0}^{t} e^{i(t-\tau)\partial_{x}^{2}}(|u|^{\alpha-1}u)(\tau)d\tau \right|\right|_{L_{T}^{\infty}M^{p,p'}}
    &\lesssim  \left|\left| \int_{0}^{t} e^{-i\tau\partial_{x}^{2}}(|u|^{\alpha-1}u)(\tau)d\tau \right|\right|_{L_{T}^{\infty}L^{p}} \nonumber\\
    &\lesssim \||\tau|^{\frac{1}{p}-\frac{1}{2}}|u(\tau)|^{\alpha}\|_{L_{T}^{b}L^{\frac{r}{\alpha}}}\nonumber\\
     &\lesssim \||\tau|^{\frac{1}{p}-\frac{1}{2}}\|_{L_{T}^{\frac{8p}{12p-4-3\alpha p}}}\|u\|_{Y_{2}(T)}^{\alpha}\nonumber\\
   &\lesssim \label{connetions} T^{1+\frac{1}{2p}-\frac{3\alpha}{8}}\|u\|_{Y_{2}(T)}^{\alpha}.
    \end{align}    
Note that the exponent of $T$ in \eqref{connetions} is positive since $p<2$ and $\alpha \in (1,10/3]$.  Thus, we see that $u(t)\in M^{p,p'}$ for any $t\in [0,T]$. Since $T>0$ is arbitrary, we have
$u\in L^{\infty}_{loc}(\R,M^{p,p'})$, and the result follows.
\end{proof}
\begin{Remark}\label{why10/3}
Observe that we have the same $b$ in the proof of Theorem \ref{betterregularityNLS} and Corollary \ref{globalprnls}. But, we bound the Duhamel operator in $M^{p,p'}$-norm by different spaces, depending on our requirements. Particularily, by $L^{Q_{p}(r)}_{T}L^{r}-$ and $Y_{2}(T)-$ norm, cf. \eqref{integral21} and \eqref{connetions}, respectively. This is well captured by the restriction on $\alpha$. Although Theorem \ref{betterregularityNLS} holds for $\alpha \in (1,5)$, Corollary \ref{globalprnls} is valid only for $\alpha \in (1,10/3]$. In view of the fact that the exponent of $T$ in \eqref{connetions} is not positive for $\alpha \in (10/3,5)$.
\end{Remark}

\section{Proof of Theorems \ref{betterregularityH}, \ref{mrh} and Corollary \ref{globalprh}}\label{Hresults}
\subsection{Proof of Theorem \ref{betterregularityH}}
Assume $u_{0} \in M^{s,s'},$ where $s$ satisfies \eqref{sbetterregularity}.
   Recall $r_{\nu}$ given as \eqref{rnu}. Define
\begin{equation*}
    X_{3}(T):=L^{Q_{s}(r_{\nu})}_{T}L^{r_{\nu}}, \quad (Q_{s}(r_{\nu}),r_{\nu}) \in \widehat{\mathcal{X}}(s),
\end{equation*}
and $$X'(T):=X_{3}(T) \cap L^{\infty}_{T}M^{s,s'}$$ 
    equipped with the norm 
$$\|v\|_{X'(T)}=\max \left\{\|v\|_{X_{3}(T)}, \|v\|_{L^{\infty}_{T}M^{s,s'}}\right\}. $$
Based on the Duhamel principle, the solutions to \eqref{NLSH} are equivalent to the integral equation 
\begin{equation}\label{duhamelH}
u(t)=e^{it\partial_{x}^{2}}u_{0}\pm i  \int_{0}^{t}  e^{i(t-\tau)\partial_{x}^{2}} \mathcal{H}_{\nu}(u)(\tau)\, d\tau:=\Lambda_{2}(u)(t). 
\end{equation}
Let $A$ and $T$ be positive real numbers (to be chosen later)   and define $$B_{A}(T):=\{u\in X'(T):\|u\|_{X'(T)}\leq A\}.$$
We will show that $\Lambda_{2}$ 
    is a contraction map on $B_{A}(T).$ 
    Firstly, we consider the linear evolution of $u_{0}\in M^{s,s'}$. Assume w.l.o.g. that $T\leq 1$. Using \eqref{gse3} and \eqref{embeddings}, we have
    \begin{equation}\label{linHartree1}
         \|e^{i t \partial_x^2}u_{0}\|_{X_{3}(T)} \lesssim 
        \|u_{0}\|_{M^{s,s'}}.
    \end{equation}
    Combining \eqref{linHartree1} and \eqref{Mpp1}, we have
    \begin{equation}\label{linHartreefinal}
        \|e^{i t \partial_x^2}u_{0}\|_{X'(T)} \lesssim \|u_{0}\|_{M^{s,s'}}.
         \end{equation}
    This suggests the choice of
    $A=C\|u_{0}\|_{M^{s,s'}}.$
    Using Lemma \ref{lemlwph} \eqref{xxx}, we have
    \begin{align}\label{iphartree1}
    \left|\left| \int_{0}^{t} e^{i(t-\tau)\partial_{x}^{2}}(\mathcal{H}_{\nu}(u))(\tau)d\tau \right|\right|_{X_{3}(T)}
    \lesssim   T^{1-\frac{\nu}{2}-\left(\frac{1}{s}-\frac{1}{2}\right)}\|u\|_{X_{3}(T)}^{3}.
    \end{align}
    Consider
   \begin{align}
       \frac{1}{\varphi}&=1+\frac{1}{2r_{\nu}}-\frac{1}{2s}=1+\frac{2}{r_{\nu}}-\frac{2}{s}+\frac{3}{Q_{s}(r_{\nu})},\label{HC3} \\
   \frac{1}{r'_{\nu}}&=\frac{1}{r_{\nu}}+\frac{\nu}{2}=\frac{1}{r_{\nu}}+\frac{2}{r_{\nu}}+\nu-1 \label{HC4}.
   \end{align}
    Now, using \eqref{Mpp1}, \eqref{dsembeddings}, \eqref{weightest} with $(\sigma',\rho')=(\varphi',(4/(2+\nu))' )\in \widehat{\mathcal{X}}(s)$, H\"older's inequality with H\"older conditions \eqref{HC3} and \eqref{HC4} with
respect to space and time variables, respectively, and the Hardy-Littlewood Sobolev inequality, we have
   \begin{align}
    \left|\left| \int_{0}^{t} e^{i(t-\tau)\partial_{x}^{2}}\mathcal{H}_{\nu}(u)(\tau)d\tau \right|\right|_{L_{T}^{\infty}M^{s,s'}}
   &= \left|\left| e^{it\partial_{x}^{2}} \left\{\int_{0}^{t} e^{-i\tau\partial_{x}^{2}}\mathcal{H}_{\nu}(u)(\tau)d\tau \right\}\right|\right|_{L_{T}^{\infty}M^{s,s'}}\nonumber\\
   &\lesssim  \left|\left| \int_{0}^{t} e^{-i\tau\partial_{x}^{2}}\mathcal{H}_{\nu}(u)(\tau)d\tau \right|\right|_{L_{T}^{\infty}M^{s,s'}} \nonumber\\
    &\lesssim  \left|\left| \int_{0}^{t} e^{-i\tau\partial_{x}^{2}}\mathcal{H}_{\nu}(u)(\tau)d\tau \right|\right|_{L_{T}^{\infty}L^{s}} \nonumber\\
    &\lesssim \||\tau|^{\frac{1}{s}-\frac{1}{2}}\mathcal{H}_{\nu}(u)(\tau)\|_{L_{T}^{\varphi}L^{\frac{4}{2+\nu}}}  \nonumber\\
    &= \||\tau|^{\frac{1}{s}-\frac{1}{2}} \|\mathcal{H}_{\nu}(u)(\tau)\|_{L^{\frac{4}{2+\nu}}}\|_{L_{T}^{\varphi}} \nonumber\\
    &\lesssim \left|\left| |\tau|^{\frac{1}{s}-\frac{1}{2}} \| |\cdot|^{-\nu}\ast |u|^{2}\|_{L^{\frac{2}{\nu}}}\| u\|_{L^{\frac{4}{2-\nu}}}\right|\right|_{L_{T}^{\varphi}} \nonumber\\
     &\lesssim \left|\left| |\tau|^{\frac{1}{s}-\frac{1}{2}} \| u\|^{3}_{L^{\frac{4}{2-\nu}}}\right|\right|_{L_{T}^{\varphi}} \nonumber\\
      &\lesssim \| |\tau|^{\frac{1}{s}-\frac{1}{2}}\|_{L_{T}^{\frac{2s}{(4-\nu)s-4 }}} \|u\|_{X_{3}(T)}^{3} \nonumber\\
    &\lesssim T^{1-\frac{\nu}{2}-\left(\frac{1}{s}-\frac{1}{2}\right)}\|u\|_{X_{3}(T)}^{3}.\label{iphartree12}
    \end{align}
    
    Thus, combining \eqref{iphartree1} and \eqref{iphartree12}, we have
    \begin{equation}\label{iphartreefinal}
          \left|\left| \int_{0}^{t} e^{i(t-\tau)\partial_x^2}(\mathcal{H}_{\nu}(u)(\tau)d\tau \right|\right|_{X'(T)} \lesssim  T^{1-\frac{\nu}{2}-\left(\frac{1}{s}-\frac{1}{2}\right)}\|u\|_{X'(T)}^{3}.
    \end{equation}
    Taking
    \begin{equation}\label{T*H}
        T :=\min \left\{ 1,~C \|u_{0}\|^{-\frac{2}{1-\frac{\nu}{2}-\left(\frac{1}{s}-\frac{1}{2}\right)}}_{M^{s,s'}} \right \},
    \end{equation}
 we combine \eqref{linHartreefinal} and \eqref{iphartreefinal} to conclude $\Lambda_{2}(u)\in B_{A}(T).$\\
 Using the previous argument employed for $u,v \in B_{A}(T)$, we can similarly establish
 $$\|\Lambda_{2}(u)-\Lambda_{2}(v)\|_{X'(T)}\leq \frac{1}{2}\|u-v\|_{X'(T)}.$$
Thus, by the contraction mapping theorem,  we obtain a unique fixed point for $\Lambda_{2}$ which is a solution to \eqref{NLSH}.
\subsection{Proof of Theorem \ref{mrh} }
Assume that
$u_{0} \in M^{s,s'},$ where $s \in (s_{0},2]$. 
Using Lemma \ref{ipt},  for any $N>1$ and $u_0 \in M^{s,s'},$ there exist $\phi_0 \in L^2$ and $ \psi_0 \in M^{s_{0},s_{0}'}$ 
such that 
\begin{gather}\label{dph}
    \begin{cases}
    u_0= \phi_0 + \psi_0\\
\|\phi_0\|_{L^2} \lesssim N^{\tilde{\alpha}},  \\
\|\psi_0\|_{M^{s_{0},s_{0}'}} \lesssim  \frac{1}{N}
\end{cases}
\end{gather}
where \begin{equation}\label{alphap}
    \tilde{\alpha} =\frac{\frac{1}{s}-\frac{1}{2}}{\frac{1}{s_{0}}-\frac{1}{s}}.
\end{equation}
We seek a solution $u$ to \eqref{NLSH} of the form
$$u:=v_{0}(t)+w_{0}(t).$$
Here, $v_{0}(t)$ satisfies the I.V.P.
\begin{eqnarray}\label{ivpL2H}
\begin{cases}
 i \partial_t v_{0} +(v_{0})_{xx}\pm \mathcal{H}_{\nu}(v_{0})=0 \\
v_{0}(\cdot,0)=\phi_{0}\in L^2.  
\end{cases}        
    \end{eqnarray}
While $w_{0}(t)$ satisfies the I.V.P.
\begin{eqnarray}\label{ivpModH}
\begin{cases}
 i \partial_t w_{0} +(w_{0})_{xx} \pm (\mathcal{H}_{\nu}(v_{0}+w_{0})-\mathcal{H}_{\nu}(v_{0}))=0 \\
w_{0}(\cdot,0)=\psi_{0}\in M^{s_{0},s_{0}'} .
\end{cases}
\end{eqnarray}
We first discuss the solution of \eqref{ivpL2H}. We recall that \eqref{ivpL2H} has a unique global solution $v_{0}$ in the space, cf. \cite[Proposition 3.4]{DGBJDE}, 
\begin{eqnarray}\label{gwpl2h}
\begin{cases}
  v_{0} \in C(\R, L^2) \cap L_{loc}^{q(r_{\nu})}(\R,L^{r_{\nu}} ), \quad (q(r_{\nu}),r_{\nu}) \in \mathcal{A}\\
\sup_{(q(r),r)\in \mathcal{A}}\|v_{0}\|_{L_{loc}^{q(r_{\nu})} L^{r_{\nu}}} \lesssim \|\phi_{0}\|_{L^{2}}.
\end{cases}
\end{eqnarray}
Note that since $v_{0} \in L_{loc}^{q(r_{\nu})}(\R,L^{r_{\nu}})$,  we also have $ v_{0}\in L_{loc}^{Q_{s_{0}}(r_{\nu})}(\R,L^{r_{\nu}})$ by using the inclusion relation for the Lebesgue spaces when $s_{0}<2$ on finite measure spaces, also see Remark \ref{chipAp}.\\
Since $v_0 $ is globally defined in appropriate spaces as a result of  \eqref{gwpl2h}, to establish the desired global existence, we need to examine the time interval of existence of the solution to the I.V.P. \eqref{ivpModH}, denoted as $w_{0}$.
We start by proving that a local solution to \eqref{ivpModH} exists in an interval of time $[0,\delta_{N}]$ defined as
\begin{equation}\label{TNh}
    \delta_{N}\sim (N^{\tilde{\alpha}})^{-\frac{4}{2-\nu}}.
\end{equation}
 Recall the space $X_{1}(T)$ (and $X_{2}(T)$) as defined in \eqref{X1(T)} (and \eqref{X2(T)}). 
 We seek a solution to \eqref{ivpModH} by considering its equivalent integral formulation,
 \begin{equation}\label{modsoln1}
 w_{0}:=e^{it\partial_{x}^{2}}\psi_{0} \pm i \int_{0}^{t}  e^{i(t-\tau)\partial_{x}^{2}} \left\{\mathcal{H}_{\nu}(v_{0}+w_{0}) - \mathcal{H}_{\nu}(v_{0}) \right\} \, d\tau
\end{equation}
in the complete metric space $B_{R}$, for some $R>0$, defined as
$$B_{R}:=\{w_{0} \in X_{1}(\delta_{N}):\|w_{0}\|_{X_{1}(\delta_{N})}\leq R/N \}.$$   
For this, we claim that the operator
    $$\Lambda(w_{0}):=e^{it\partial_{x}^{2}}\psi_{0}\pm i \int_{0}^{t}  e^{i(t-\tau)\partial_{x}^{2}} \left\{\mathcal{H}_{\nu}(v_{0}+w_{0}) - \mathcal{H}_{\nu}(v_{0}) \right\} \, d\tau$$
    is well defined on $B_{R}$ and is a contraction map from $B_{R}$ to itself.
    Assume $w_{0} \in B_{R}$. Taking \eqref{gse3}, \eqref{embeddings} and \eqref{dph} into account, we have
    \begin{equation}\label{est1}
        \|e^{it\partial_{x}^{2}}\psi_{0}\|_{X_{1}(\delta_{N})} \lesssim 1/N.
    \end{equation}
    Next, we use Proposition \ref{gse2} to obtain
    \begin{align*}
         \left|\left| \int_{0}^{t}  e^{i(t-\tau)\partial_{x}^{2}} \left\{\mathcal{H}_{\nu}(v_{0}+w_{0}) - \mathcal{H}_{\nu}(v_{0}) \right\} \, d\tau \right|\right|_{X_{1}(\delta_{N})}& \lesssim \left|\left| \mathcal{H}_{\nu}(v_{0}+w_{0}) - \mathcal{H}_{\nu}(v_{0})\right|\right|_{L^{\sigma}L^{r_{\nu}'}} 
\end{align*}
where $\sigma$ can be determined by the relation $$2+\frac{1}{s_{0}}=\frac{2}{\sigma}+\frac{1}{r_{\nu}'}.$$ 
It is enough to estimate the terms 
$$\mathcal{H}_{\nu}(v_{0},v_{0},w_{0}),\quad \mathcal{H}_{\nu}(w_{0},w_{0},w_{0}),\quad \mathcal{H}_{\nu}(v_{0},w_{0},w_{0})\quad
\text{for}\quad v_{0} \in X_{2}(\delta_{N}), w_{0} \in X_{1}(\delta_{N}).$$
Using Lemma \ref{lemlwph} \eqref{yyx} and \eqref{dph}, we have 
\begin{align*}
\|\mathcal{H}_{\nu}(v_{0},v_{0},w_{0})\|_{L^{\sigma}L^{r_{\nu}'}} &\lesssim \delta_{N}^{1-\frac{\nu}{2}}\|v_{0}\|^{2}_{X_{2}(\delta_{N})}
\|w_{0}\|_{X_{1}(\delta_{N})}\\
&\lesssim (N^{\tilde{\alpha}})^{-2} \cdot N^{2\tilde{\alpha}} \cdot  \frac{R}{N} =C \frac{R}{N}
    \end{align*}
Similarly, applying Lemma \ref{lemlwph} \eqref{yxx} and \eqref{dph}, we get
\begin{align*}
\|\mathcal{H}_{\nu}(v_{0},w_{0},w_{0})\|_{L^{\sigma}L^{r'}} &\lesssim \delta_{N}^{1-\frac{\nu}{2}-\frac{1}{2}\left(\frac{1}{s_{0}}-\frac{1}{2}\right)}\|v_{0}\|_{X_{2}(\delta_{N})}
\|w_{0}\|^{2}_{X_{1}(\delta_{N})}\\
&\lesssim (N^{\tilde{\alpha}})^{-2} \cdot (N^{\tilde{\alpha}})^{\frac{2}{2-\nu}\left(\frac{1}{s_{0}}-\frac{1}{2}\right)} \cdot N^{\tilde{\alpha}} \cdot  \left(\frac{R}{N}\right)^2 =C \left(\frac{R}{N}\right)^2
    \end{align*}
Lastly, using Lemma \ref{lemlwph} \eqref{xxx} and \eqref{dph}, we obtain,
\begin{align*}
\|\mathcal{H}_{\nu}(v_{0},v_{0},w_{0})\|_{L^{\sigma}L^{r'}} &\lesssim \delta_{N}^{1-\frac{\nu}{2}-\left(\frac{1}{s_{0}}-\frac{1}{2}\right)}
\|w_{0}\|^{3}_{X_{1}(\delta_{N})}\\
&\lesssim (N^{\tilde{\alpha}})^{-2} \cdot (N^{\tilde{\alpha}})^{\frac{4}{2-\nu}\left(\frac{1}{s_{0}}-\frac{1}{2}\right)} \cdot  \left(\frac{R}{N}\right)^{3} =C \left(\frac{R}{N}\right)^{3}.
    \end{align*}
    The above inequalities hold since
    $$\frac{2}{2-\nu}\left(\frac{1}{s_{0}}-\frac{1}{2}\right) <1,$$ which follows from the assumption that  $$\frac{2}{3-\nu}<\frac{4}{2+\nu}<s_{0}\quad \text{and}\quad 0<\nu<1.$$
   Collecting these estimates and \eqref{est1}, we obtain
   \begin{align*}
       \|\Lambda(w_{0})\|_{X_{1}(\delta_{N})} \leq \frac{C_{1}}{N} +C_{2} \left(\frac{R}{N}+\left(\frac{R}{N}\right)^2+\left(\frac{R}{N}\right)^{3}\right)
   \end{align*}
   We choose $C_{1}$ and $C_{2}$ such that the right-hand side of the last inequality is less than or equal to $R/N$ with $N>N_{0}$ for some $N_{0}>1.$ Thus, $\Lambda(w_{0}):B_{R} \to B_{R} $ is a well-defined map. 
   Similarly, we can show that $\Lambda(w_{0})$ is a contraction map. Consequently, by the Banach fixed-point theorem, we get a local solution 
   $w_{0} \in X_{1}(\delta_{N})$\footnote{Also, $w_{0}-e^{it\partial_{x}^{2}}\psi_{0} \in X_{1}(\delta_{N})$ since $w_{0},\;e^{it\partial_{x}^{2}}\psi_{0} \in X_{1}(\delta_{N}).$}.\\
   Therefore, we have a local solution $u:=v_{0}+w_{0}$ to \eqref{NLSH} in $[0,\delta_{N}]$ such that  $$v_0 + (w_0-e^{it\partial_{x}^{2}} \psi_0) \in  L^{Q_{s_{0}}(r_{\nu})}_{\delta_{N}}L^{r_{\nu}}$$ and $$
    e^{it\partial_{x}^{2}} \psi_0 \in L^{\infty}_{\delta_{N}}M^{s_{0},s_{0}'} .$$
  Next, we aim to extend our solution to the interval $[\delta_{N}, 2\delta_{N}]$ by using a similar procedure but with the new initial data as the sum of the following two functions:    
\[ \phi_{1}=v_{0}(\delta_{N})+(w_{0}(\delta_{N})-e^{i\delta_{N}\partial_{x}^{2}}\psi_{0}) \quad \text{and} \quad \psi_{1} =e^{i\delta_{N}\partial_{x}^{2}}\psi_{0}.\]
satisfying \eqref{dph}.
Using \eqref{gwpl2h}, we have $v_{0}(\delta_{N}) \in L^2$ and by \eqref{Mpp1}, we have $e^{i\delta_{N}\partial_{x}^{2}}\psi_{0} \in M^{s_{0},s_{0}'}$. Specifically, we have
$$\|\psi_{1}\|_{M^{s_{0},s_{0}'}} \leq \sup_{t\in [0,\delta_{N}]}\|e^{it\partial_{x}^{2}}\psi_{0}\|_{M^{s_{0},s_{0}'}} \lesssim \|\psi_{0}\|_{M^{s_{0},s_{0}'}} \lesssim 1/N.$$  
It is important to observe that $w_{0}(\delta_{N})-e^{i\delta_{N}\partial_{x}^{2}}\psi_{0} \in L^{2}$. Hence, $\phi_{1} \in L^2$. To demonstrate this, we utilize Strichartz estimates (Proposition \ref{fst}) to obtain
\begin{align*}
    \sup_{t\in [0,\delta_{N}]}\|w_{0}(t)-e^{it\partial_{x}^{2}}\psi_{0}\|_{L^{2}} &\lesssim \left|\left| \mathcal{H}_{\nu}(v_{0}+w_{0}) - \mathcal{H}_{\nu}(v_{0})\right|\right|_{L^{\sigma}L^{\rho}} 
\end{align*}
with $(\sigma',\rho') \in \mathcal{A}.$ 
As seen before, it is enough to estimate $$\mathcal{H}_{\nu}(v_{0},v_{0},w_{0}),\quad \mathcal{H}_{\nu}(w_{0},w_{0},w_{0}),\quad \mathcal{H}_{\nu}(v_{0},w_{0},w_{0})\quad
\text{for}\quad v_{0} \in X_{2}(\delta_{N}), w_{0} \in X_{1}(\delta_{N}).$$ 
Using Lemma \eqref{lemlwph} \eqref{yyx} and \eqref{dph}, it can be estimated as
\begin{align*}
\|\mathcal{H}_{\nu}(v_{0},v_{0},w_{0})\|_{L^{\sigma}L^{\rho}} &\lesssim \delta_{N}^{1-\frac{\nu}{2}-\frac{1}{2}\left(\frac{1}{s_{0}}-\frac{1}{2}\right)}\|v_{0}\|^{2}_{X_{2}(\delta_{N})}
\|w_{0}\|_{X_{1}(\delta_{N})}\\
& \lesssim N^{-1+\frac{2}{2-\nu}\left(\frac{1}{s_{0}}-\frac{1}{2}\right)\tilde{\alpha}}.
    \end{align*}
Similarly, applying Lemma \ref{lemlwph} \eqref{yxx} and \eqref{dph}, we get
\begin{align*}
\|\mathcal{H}_{\nu}(v_{0},w_{0},w_{0})\|_{L^{\sigma}L^{\rho}} &\lesssim \delta_{N}^{1-\frac{\nu}{2}-\left(\frac{1}{s_{0}}-\frac{1}{2}\right)}\|v_{0}\|_{X_{2}(\delta_{N})}
\|w_{0}\|^{2}_{X_{1}(\delta_{N})}\\
& \lesssim N^{-2+\frac{4}{2-\nu}\left(\frac{1}{s_{0}}-\frac{1}{2}\right)\tilde{\alpha}-\tilde{\alpha}}
    \end{align*}
and, applying Lemma \ref{lemlwph} \eqref{xxx} and \eqref{dph}, 
\begin{align*}
\|\mathcal{H}_{\nu}(v_{0},v_{0},w_{0})\|_{L^{\sigma}L^{\rho}} &\lesssim \delta_{N}^{1-\frac{\nu}{2}-\frac{3}{2}\left(\frac{1}{s_{0}}-\frac{1}{2}\right)}
\|w_{0}\|^{3}_{X_{1}(\delta_{N})}\\
& \lesssim N^{-3+\frac{6}{2-\nu}\left(\frac{1}{s_{0}}-\frac{1}{2}\right)\tilde{\alpha}-2\tilde{\alpha}}.
    \end{align*}
The above inequalities hold good if 
$$\frac{6}{7-2\nu}<s_{0}$$
which follows since
$$\frac{4}{2+\nu}<s_{0} \quad \text{and} \quad 0<\nu<1.$$
Finally, for sufficiently large $N,$ we have
\begin{align}
    \sup_{t\in [0,\delta_{N}]}\|w_{0}(t)-e^{it\partial_{x}^{2}}\psi_{0}\|_{L^{2}} &\lesssim  N^{-1+\frac{2}{2-\nu}\left(\frac{1}{s_{0}}-\frac{1}{2}\right)\tilde{\alpha}} .\label{winfty2H}
\end{align}    
    Thus, by \eqref{mass}, \eqref{dph} and \eqref{winfty2H}, we have
    \begin{equation*}
        \|\phi_{1}\|_{L^2} \lesssim N^{\tilde{\alpha}}+N^{-1+\frac{2}{2-\nu}\left(\frac{1}{s_{0}}-\frac{1}{2}\right)\tilde{\alpha}}. 
    \end{equation*}
    In order to extend our solution by $\delta_{N}$ and apply Lemma \ref{ipt} for newly defined initial data decomposition, we need 
    $$N^{\tilde{\alpha}}+N^{-1+\frac{2}{2-\nu}\left(\frac{1}{s_{0}}-\frac{1}{2}\right)\tilde{\alpha}} \lesssim  2 N^{\tilde{\alpha}}.$$
For $N$ sufficiently large and $2/ (3-\nu)<s_{0}, $ we have 
$$N^{-1+\frac{2}{2-\nu}\left(\frac{1}{s_{0}}-\frac{1}{2}\right)\tilde{\alpha}} \lesssim N^{\tilde{\alpha}}  .$$
Thus, \eqref{NLSH} has a local solution in an interval of time $[\delta_{N},2\delta_{N}]$.
More generally, we 
 aim to extend our solution further by using a similar procedure. We define $\phi_{k}$ and $\psi_k$ for $k \geq 1$ (for $k=0,\; \phi_{0}$ and $\psi_0$ are defined in \eqref{dp}) as follows:
    \begin{eqnarray}
    \begin{aligned}\label{iter1}
 \phi_{k}&= v_{k-1}(k\delta_{N})  \pm i \displaystyle \int_{0}^{k\delta_{N}} e^{i(k\delta_{N}-\tau)\partial_{x}^{2}} \left\{\mathcal{H}_{\nu}(v_{k-1}+w_{k-1}) - \mathcal{H}_{\nu}(v_{k-1}) \right\} \,d\tau \quad \\
 \text{and} \quad \psi_{k}&= 
 e^{ik\delta_{N}\partial_{x}^{2}}\psi_{0}.
    \end{aligned}
       \end{eqnarray}
 We repeat the iteration by redefining $\phi_{k+1}$ and $\psi_{k+1},$ and maintaining control of the nonlinear interaction term $(w_{k}((k+1)\delta_{N})-e^{i(k+1)\delta_{N}\partial_{x}^{2}}\psi_{0})$ so that it can be absorbed into $v_{k}((k+1)\delta_{N})$ without losing  mass conservation of $\phi_{k+1}.$
 That is, we can obtain the solution by the above iteration scheme as long as $\psi_{K}$ and $\phi_{K}$ satisfy \eqref {dph}.
Observe that, using \eqref{Mpp1} and \eqref{dph}, for any $k,$ we have 
 \begin{equation*}
\|\psi_{k}\|_{M^{s_{0},s_{0}'}}=\|e^{ik\delta_{N}\partial_{x}^{2}}\psi_{0}\|_{M^{s_{0},s_{0}'}}\lesssim \sup_{t\in [0,k\delta_{N}]}\|e^{it\partial_{x}^{2}}\psi_{0}\|_{M^{s_{0},s_{0}'}} \lesssim \|\psi_{0}\|_{M^{s_{0},s_{0}'}} \lesssim 1/N.
 \end{equation*}
 And, $$\|\phi_{K}\|_{L^{2}} \leq CN^{\tilde{\alpha}}+K\delta_{N}^{-1+\frac{2}{2-\nu}\left(\frac{1}{s_{0}}-\frac{1}{2}\right)\tilde{\alpha}} \leq 2C N^{\tilde{\alpha}}$$
 from which we can extract the maximum number of iterations possible $ K$, as
 $$K \lesssim N^{1+\left[1-\frac{2}{2-\nu}\left(\frac{1}{s_{0}}-\frac{1}{2}\right)\right]\tilde{\alpha}}.$$
Consequently, we can establish a local solution of \eqref{NLSH} to the time $T_{N}:=K\delta_{N}$ given in \eqref{TNhar}. 
Since $N$ can be chosen to be arbitrarily large, and in the case where the exponent of $N$ is positive, we obtain a global solution. As a result, the assumption \eqref{assumptionhar} becomes necessary to get a global solution. Also, see Remark \ref{remarkhartee}\eqref{remarkhartee2}. This concludes the proof of Theorem \ref{mrh}.

\subsection{Proof of Corollary \ref{globalprh}}
Prior to proving Corollary \ref{globalprh}, we require the following proposition.
\begin{prop}[uniqueness]\label{uniqueH}
   Let $u_{1}$ be the local solution of \eqref{NLSH} obtained in Theorem \ref{betterregularityH} and $u_{2}$ be the global solution of \eqref{NLSH} obtained in Theorem \ref{mrh}. Assume $u_{1}(0)=u_{2}(0):=u_{0} \in M^{s,s'}$ for $s$ satisfying \eqref{s0gwphar}. Then, for $T' \in (0,T_{*})$, we have $$u_{1}(x,t)=u_{2}(x,t),\quad \forall  (x,t)\in \R \times  [0, T'].$$ 
\end{prop}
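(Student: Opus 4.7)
The plan is to mirror the strategy of Proposition \ref{uniqueNLS}, adapting it to the Hartree nonlinearity via Lemma \ref{lemlwph}. The first step is to identify a common function space housing both solutions on the interval $[0,T']$. By Theorem \ref{betterregularityH}, we have $u_{1}\in C([0,T_{*}),M^{s,s'})\cap L^{Q_{s}(r_{\nu})}_{T_{*}}L^{r_{\nu}}$, while Theorem \ref{mrh} gives, for any $T>0$,
\[
u_{2}\in \bigl(L^{\infty}_{T}L^{2}\cap L^{Q_{s_{0}}(r_{\nu})}_{T}L^{r_{\nu}}\bigr)+\bigl(L^{\infty}_{T}M^{s_{0},s_{0}'}\cap L^{Q_{s_{0}}(r_{\nu})}_{T}L^{r_{\nu}}\bigr).
\]
Since $s_{0}<s$ forces $Q_{s_{0}}(r_{\nu})<Q_{s}(r_{\nu})$, the inclusion $L^{Q_{s}(r_{\nu})}_{T'}L^{r_{\nu}}\hookrightarrow L^{Q_{s_{0}}(r_{\nu})}_{T'}L^{r_{\nu}}$ on finite intervals places both $u_{1}$ and $u_{2}$ inside $X_{1}(T')=L^{Q_{s_{0}}(r_{\nu})}_{T'}L^{r_{\nu}}$, whereas the modulation-space components cannot be directly compared (since $M^{s,s'}$ and $M^{s_{0},s_{0}'}$ are incomparable).

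Next, I would invoke Duhamel's formula \eqref{duhamelH} and subtract, obtaining
\[
u_{1}(t)-u_{2}(t)=\pm i\int_{0}^{t}e^{i(t-\tau)\partial_{x}^{2}}\bigl[\mathcal{H}_{\nu}(u_{1})-\mathcal{H}_{\nu}(u_{2})\bigr](\tau)\,d\tau.
\]
Because $\mathcal{H}_{\nu}$ is trilinear, the difference $\mathcal{H}_{\nu}(u_{1})-\mathcal{H}_{\nu}(u_{2})$ telescopes into a sum of three terms of the shape $\mathcal{H}_{\nu}(a,b,c)$ in which exactly one slot carries $u_{1}-u_{2}$ and the other two slots carry $u_{1}$ or $u_{2}$ (all in $X_{1}(T')$). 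Applying Lemma \ref{lemlwph}\eqref{xxx} slot by slot for some $T_{0}\in(0,T']$ gives
\[
\|u_{1}-u_{2}\|_{X_{1}(T_{0})}\lesssim T_{0}^{1-\frac{\nu}{2}-\left(\frac{1}{s_{0}}-\frac{1}{2}\right)}\bigl(\eta_{T'}\bigr)^{2}\|u_{1}-u_{2}\|_{X_{1}(T_{0})},
\]
where $\eta_{T'}:=\max\{\|u_{1}\|_{X_{1}(T')},\|u_{2}\|_{X_{1}(T')}\}$. The exponent of $T_{0}$ is strictly positive thanks to the hypotheses $s_{0}>4/(2+\nu)$ and $0<\nu<1$, exactly as in the proof of Lemma \ref{lemlwph}.

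Choosing $T_{0}>0$ small enough that the prefactor does not exceed $1/2$ yields $\|u_{1}-u_{2}\|_{X_{1}(T_{0})}\le\tfrac12\|u_{1}-u_{2}\|_{X_{1}(T_{0})}$, hence $u_{1}\equiv u_{2}$ on $[0,T_{0}]$. Crucially, the length $T_{0}$ depends only on $\eta_{T'}$ (a finite quantity), so the argument can be iterated with initial time $T_{0}$, then $2T_{0}$, and so on, covering $[0,T']$ in finitely many steps; since $T'<T_{*}$ is arbitrary, uniqueness extends to all of $[0,T_{*})$. The only real subtlety is the initial space-matching observation that $L^{Q_{s}(r_{\nu})}_{T'}L^{r_{\nu}}\subset L^{Q_{s_{0}}(r_{\nu})}_{T'}L^{r_{\nu}}$ on bounded intervals, which makes Lemma \ref{lemlwph}\eqref{xxx} applicable uniformly to both solutions; once that is secured, the rest of the proof is a standard small-time contraction followed by a continuity-in-time argument, and there is no serious obstacle beyond that.
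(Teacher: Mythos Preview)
Your proposal is correct and follows essentially the same approach as the paper: you identify the common space $X_{1}(T')=L^{Q_{s_{0}}(r_{\nu})}_{T'}L^{r_{\nu}}$ via the inclusion forced by $Q_{s_{0}}(r_{\nu})<Q_{s}(r_{\nu})$, apply Lemma~\ref{lemlwph}\eqref{xxx} to the Duhamel difference, and close by a small-time contraction plus iteration depending only on $\eta_{T'}$. The paper's argument is identical up to notation (it writes $\tilde{\eta}_{T'}$ and the quadratic factor as $\|u_{1}\|_{X_{1}(T')}^{2}+\|u_{2}\|_{X_{1}(T')}^{2}$), so there is nothing to add.
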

\begin{proof}
Recall by Theorem \ref{betterregularityH}, for $s$ satisfying \eqref{s0gwphar} \footnote{Note that if $s$ satisfies \eqref{s0gwphar}, then $s$ consequently satisfies \eqref{sbetterregularity}.}, there exist $T' \in (0,T_{*})$ such that 
\[u_1\in C_{T'}M^{s,s'}\cap L^{Q_s(r_{\nu})}_{T'}L^{r_{\nu}}.\]
As a result of Theorem \ref{mrh}, for any arbitrary $T>0$ and $s$ satisfying \eqref{s0gwphar}, \eqref{NLSH} possesses a solution  
\[u_2\in \left( L^{\infty}_{T}M^{s_0,s_0'}\cap L^{Q_{s_0}(r_{\nu})}_{T}L^{r_{\nu}}\right) + \left( L^{\infty}_{T}L^2\cap L^{Q_{s_0}(r_{\nu})}_{T}L^{r_{\nu}}\right) .\]
Since $s_{0}<s$, we have  $$Q_{s_0}(r_{\nu}) < Q_s(r_{\nu}).$$  
Hence,
\[L^{Q_s(r_{\nu})}_{T'}L^{r_{\nu}} \subset L^{Q_{s_0}(r_{\nu})}_{T'}L^{r_{\nu}}.\]
Thus, both solutions  $u_1, u_2 \in L^{Q_{s_0}(r_{\nu})}_{T'}L^r=:X_{1}(T')$.
By Duhamel’s formula \eqref{duhamelH} and Lemma \ref{lemlwph} \eqref{xxx}, for $T_{0}\in (0, T']$, we have
\begin{align*}
        \|u_{1}-u_{2}\|_{X_{1}(T_{0})}&=\left|\left|\int_{0}^{t} e^{i(t-\tau)\partial_{x}^{2}}(\mathcal{H}_{\nu}(u_{1})-\mathcal{H}_{\nu}(u_{2}))(\tau)d\tau \right|\right|_{X_{1}(T_{0})} \\
        & \lesssim   T_{0}^{1-\frac{\nu}{2}-\left(\frac{1}{s_{0}}-\frac{1}{2}\right)} \|u_{1}-u_{2}\|_{X_{1}(T_{0})}
   \left(\|u_{1}\|_{X_{1}(T')}^{2} + \|u_{2}\|_{X_{1}(T')}^{2}\right)\\
   &= 2 (\Tilde{\eta}_{T'})^{2} \; T_{0}^{1-\frac{\nu}{2}-\left(\frac{1}{s_{0}}-\frac{1}{2}\right)} \|u_{1}-u_{2}\|_{X_{1}(T_{0})},
    \end{align*}
    where $\Tilde{\eta}_{T'}:= \max\{\|u_{1}\|_{X_{1}(T')}, \|u_{2}\|_{X_{1}(T')}\}$.
  Now, taking $T_{0}>0$ sufficiently small such that
  $$ 2 (\Tilde{\eta}_{T'})^{2}\;  T_{0}^{1-\frac{\nu}{2}-\left(\frac{1}{s_{0}}-\frac{1}{2}\right)} \leq \frac{1}{2},$$
   we have $$\|u_{1}-u_{2}\|_{X_{1}(T_{0})} \leq \frac{1}{2}\|u_{1}-u_{2}\|_{X_{1}(T_{0})}.$$
Therefore,   $$u_{1}(x,t)=u_{2}(x,t),\quad \forall (x,t)\in \R \times  [0,T_{0}].$$
It follows from the above difference estimate that $u_1 = u_2$ on $[0,T_{0}]$ for sufficiently small
$T_{0} \in (0,T']$. Thus, we can establish uniqueness on $[0, T']$ by a continuity argument. This concludes the proof.
\end{proof}

\begin{proof}[\textbf{Proof of Corollary \ref{globalprh}}]
 Let $u(0)\in M^{s,s'}$ with $s$ satisfying \eqref{s0gwphar}. 
Using the blow-up alternative, the local solution from Theorem \ref{betterregularityH} can be extended globally if we establish that for all finite $T>0$,
 \begin{equation}\label{finiteblowup3}
    \sup_{0 \leq t < T} \|u(t)\|_{M^{s,s'}} < \infty.
\end{equation}
In Proposition \ref{uniqueH}, we have shown that the solution obtained in Theorem \ref{betterregularityH} and Theorem \ref{mrh} equals almost everywhere. We denote it by $ u$.\\
From Theorem \ref{mrh}, we have \begin{align}
   u \in & \left(L^{\infty}_{loc}(\R,L^2)\cap  L^{Q_{s_{0}}(r_{\nu})}_{loc}(\R,L^{r_{\nu}})\right) +\left(L^{\infty}_{loc}(\R,M^{s_{0},s_{0}'})\cap L^{Q_{s_{0}}(r_{\nu})}_{loc}(\R,L^{r_{\nu}})\right) \nonumber \\
    & \subset L^{Q_{s_{0}}(r_{\nu})}_{loc}(\R,L^{r_{\nu}})
    \subset L^{\frac{8}{1+\nu}}_{loc}(\R,L^{r_{\nu}})\label{essential2}. 
\end{align}
The above embedding holds since $4/3 < s_{0} < 2$. Denote $$X_{4}(T):=L^{\frac{8}{1+\nu}}_{T} L^{r_{\nu}}$$
Considering \eqref{Mpp1} for the linear part of $u$, it is enough to estimate the $M^{s,s'}$ norm of the Duhamel operator (cf. \eqref{duhamelH}) to claim \eqref{finiteblowup3}.
Denote 
\begin{align}\label{varphi1.11}
\frac{1}{\varphi}
    &:=1+\frac{1}{2r_{\nu}}-\frac{1}{2s}
    =\frac{(7-4\nu)s-4}{8s}+\frac{3(1+\nu)}{8},\\
    \frac{1}{r'_{\nu}}&=\frac{1}{r_{\nu}}+\frac{\nu}{2}=\frac{1}{r_{\nu}}+\frac{2}{r_{\nu}}+\nu-1 \label{HC5}.
   \end{align}
 Repeating the argument used in Theorem \ref{betterregularityH} to derive inequality \eqref{iphartree12}, using \eqref{Mpp1}, \eqref{dsembeddings} and \eqref{weightest} with $(\sigma',\rho')=(\varphi',(4/(2+\nu))' )\in \widehat{\mathcal{X}}(s)$ and $\varphi$ given as \eqref{varphi1.11}, H\"older's inequality with H\"older  conditions \eqref{varphi1.11} and \eqref{HC5} with
respect to space and time variables, respectively, along with the Hardy-Littlewood Sobolev inequality, we have
   \begin{align}
    \left|\left| \int_{0}^{t} e^{i(t-\tau)\partial_{x}^{2}}\mathcal{H}_{\nu}(u)(\tau)d\tau \right|\right|_{L_{T}^{\infty}M^{s,s'}}
    &\lesssim  \left|\left| \int_{0}^{t} e^{-i\tau\partial_{x}^{2}}\mathcal{H}_{\nu}(u)(\tau)d\tau \right|\right|_{L_{T}^{\infty}L^{s}} \nonumber\\
    &\lesssim \||\tau|^{\frac{1}{s}-\frac{1}{2}}\mathcal{H}_{\nu}(u)(\tau)\|_{L_{T}^{\varphi}L^{\frac{4}{2+\nu}}}  \nonumber\\
    &\lesssim \left|\left| |\tau|^{\frac{1}{s}-\frac{1}{2}} \| |\cdot|^{-\nu}\ast |u|^{2}\|_{L^{\frac{2}{\nu}}}\| u\|_{L^{r_{\nu}}}\right|\right|_{L_{T}^{\varphi}} \nonumber\\
     &\lesssim \left|\left| |\tau|^{\frac{1}{s}-\frac{1}{2}} \| u\|^{3}_{L^{r_{\nu}}}\right|\right|_{L_{T}^{\varphi}} \nonumber\\
      &\lesssim \| |\tau|^{\frac{1}{s}-\frac{1}{2}}\|_{L_{T}^{\frac{8s}{(7-4\nu)s-4 }}} \|u\|_{X_{4}(T)}^{3} \nonumber\\
    &\lesssim T^{\frac{3}{8}-\frac{\nu}{2}+\frac{1}{2s}}\|u\|_{X_{4}(T)}^{3}\label{TglobalHartree}.
    \end{align}
Note that the exponent of $T$ in \eqref{TglobalHartree} is positive since $s<2$ and $0<\nu<1$. Considering \eqref{essential2}, $u(t)\in M^{s,s'}$ for any $t\in [0,T]$. Since $T>0$ is arbitrary,
$u\in L^{\infty}_{loc}(\R,M^{s,s'})$.
\end{proof}

\section{Appendix}\label{app}
For the convenience of the reader, we briefly give the proof of \eqref{dp}: 
 \begin{lem}[interpolation]\label{ipt}
Let $u_{0} \in M^{p,p'},\; p\in (p_{0},2)$ and $N>0$. Then, there exist $\phi_{0} \in L^{2}$ and $\psi_{0} \in M^{p_{0},p_{0}'}$  such that 
\[u_{0}= \phi_{0}+ \psi_{0}, \quad \|\psi_{0}\|_{M^{p_{0},p_{0}'}}\leq C \|u_{0}\|
_{M^{p,p'}}\frac{1}{N}, \quad \|\phi_{0}\|_{L^2}\leq C \|u_{0}\|_{M^{p,p'}}N^{\gamma}. \]
\end{lem}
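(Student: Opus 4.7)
The plan is to perform a distribution-function thresholding at the level of the frequency-uniform partition, mimicking the standard Bourgain high/low decomposition but adapted to the modulation space setting. Set $c_k := \|\square_k u_0\|_{L^p}$ so that $\|(c_k)\|_{\ell^{p'}} = \|u_0\|_{M^{p,p'}}$, and for a threshold $\lambda = \lambda(N) > 0$ to be specified at the end, form the index set $I_\lambda := \{k \in \mathbb{Z} : c_k > \lambda\}$. Decompose
\[
\phi_0 := \sum_{k \in I_\lambda} \square_k u_0, \qquad \psi_0 := \sum_{k \notin I_\lambda} \square_k u_0,
\]
so $u_0 = \phi_0 + \psi_0$.

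For the $L^2$-bound on $\phi_0$, I would use $L^2 = M^{2,2}$ together with Plancherel and the finite overlap of $\{\sigma_k\}$ to obtain $\|\phi_0\|_{L^2}^2 \lesssim \sum_{k \in I_\lambda} \|\square_k u_0\|_{L^2}^2$. Since each $\square_k u_0$ has Fourier support in a unit-sized interval and $p \leq 2$, Bernstein's inequality yields $\|\square_k u_0\|_{L^2} \lesssim c_k$. The elementary distribution-function estimate
\[
\sum_{c_k > \lambda} c_k^2 \;\leq\; \lambda^{2-p'}\sum_k c_k^{p'} \;\leq\; \lambda^{2-p'} \|u_0\|_{M^{p,p'}}^{p'}
\]
(valid since $p' \geq 2$) then gives $\|\phi_0\|_{L^2} \lesssim \lambda^{1-p'/2} \|u_0\|_{M^{p,p'}}^{p'/2}$.

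For the $M^{p_0,p_0'}$-bound on $\psi_0$, I would invoke the equivalent STFT/Gabor characterisation $\|u\|_{M^{p_0,p_0'}} \approx \|V_g u\|_{L^{p_0,p_0'}}$: at the coefficient level, the partition $I_\lambda$ corresponds to a magnitude threshold, and the complementary inequality $|c|^{p_0'} \leq \lambda^{p_0'-p'}|c|^{p'}$ (valid when $|c| \leq \lambda$ and $p_0' \geq p'$), combined with a Minkowski-type embedding between the mixed-norm sequence spaces, yields
\[
\|\psi_0\|_{M^{p_0,p_0'}}^{p_0'} \;\lesssim\; \lambda^{p_0'-p'}\, \|u_0\|_{M^{p,p'}}^{p'}.
\]
Choosing $\lambda$ of order $\|u_0\|_{M^{p,p'}} N^{-p_0'/(p_0'-p')}$ makes $\|\psi_0\|_{M^{p_0,p_0'}} \lesssim \|u_0\|_{M^{p,p'}}/N$; substituting this $\lambda$ into the $\phi_0$-bound and simplifying with $1/p+1/p' = 1 = 1/p_0+1/p_0'$ collapses the exponent of $N$ to exactly $\gamma = (1/p - 1/2)/(1/p_0 - 1/p)$, as required.

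The main obstacle I expect is the second estimate. A naive block-level argument for $\psi_0$ would require $\|\square_k u_0\|_{L^{p_0}} \lesssim c_k$, but Bernstein only supplies the reverse inequality for $p_0 < p$, so a pure block-wise distribution-function argument is not available. Working at the Gabor-coefficient level—or, equivalently, recognising the statement as the K-functional estimate for the couple $(L^2, M^{p_0,p_0'})$ at the correct interpolation parameter $\theta = (1/p_0 - 1/p)/(1/p_0 - 1/2)$, where $\gamma = (1-\theta)/\theta$—bypasses this obstruction and produces the decomposition with precisely the claimed exponents.
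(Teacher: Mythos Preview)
Your concrete block-level thresholding runs into exactly the obstruction you flagged: for $\psi_0$ one needs control of $\|\square_k u_0\|_{L^{p_0}}$ in terms of $c_k=\|\square_k u_0\|_{L^p}$ with $p_0<p$, and Bernstein goes the wrong way. The STFT suggestion does not obviously fix this: if you threshold on $a(\xi)=\|V_g u_0(\cdot,\xi)\|_{L^p_x}$ you face the same $L^{p_0}_x$-versus-$L^p_x$ issue in the inner norm, and thresholding on the pointwise magnitude $|V_g u_0(x,\xi)|$ does not cleanly produce a decomposition $u_0=\phi_0+\psi_0$ at the function level with the desired mixed-norm estimates. So the explicit argument, as written, is incomplete.

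Your final sentence, however, is the actual proof and is precisely the paper's route. The paper invokes the complex interpolation identity
\[
[M^{p_0,p_0'},L^2]_\theta = M^{p,p'},\qquad \theta=\frac{1/p_0-1/p}{1/p_0-1/2},
\]
then the standard embedding $[A_0,A_1]_\theta\hookrightarrow (A_0,A_1)_{\theta,\infty}$, and finally reads off the K-functional at the parameter $t=N^{-1/\theta}$ to obtain the decomposition with $\gamma=(1-\theta)/\theta$. This is a three-line argument once the interpolation facts are quoted, and it sidesteps the Bernstein obstruction entirely because the splitting is produced abstractly rather than by an explicit frequency cut. What your concrete approach would buy, were it to work, is an explicit $\phi_0,\psi_0$; but since the $\psi_0$-half fails, the abstract interpolation argument is both the paper's proof and the only complete one in your proposal.
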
 
\begin{proof}
   Using complex interpolation of modulation spaces, (see \cite[Theorem 6.1(D)]{Feih83}), for $p_{0}<p<2$ and $\theta \in (0,1)$, we have
    \begin{equation*}
[M^{p_{0},p_{0}'},L^{2}]_{\theta}=M^{p,p'},
    \quad\text{with} \quad\theta=\frac{\frac{1}{p_{0}}-\frac{1}{p}}{\frac{1}{p_{0}}-\frac{1}{2}}.
    \end{equation*}
Additionally, invoking \cite[Definition 1.10.1 and Theorem 1.10.3/1]{triebel1995interpol} yields
    \begin{align*}
[M^{p_{0},p_{0}'},L^{2}]_{\theta} \hookrightarrow (M^{p_{0},p_{0}'},L^{2})_{(\theta,\infty)},
    \end{align*}
    where the norm of the space $(M^{p_{0},p_{0}'},L^{2})_{(\theta,\infty)}$ is defined as
    \begin{equation}\label{bound1}
\|u_{0}\|_{(M^{p_{0},p_{0}'},L^{2})_{(\theta,\infty)}} =\sup_{t>0} \inf_{u_{0}=\psi_{0}+\phi_{0}} (t^{-\theta}\|\psi_{0}\|_{M^{p_{0},p_{0}'}}+t^{1-\theta}\|\phi_{0}\|_{L^{2}}), \quad\text{for}\; \psi_{0}\in M^{p_{0},p_{0}'},  \phi_{0}\in L^{2}.
    \end{equation}
    Given $N>0,$ taking $
      t=N ^{-\frac{1}{\theta}} $ in \eqref{bound1}, we obtain
    \begin{equation}\label{bound2}
       N\|\psi_{0}\|_{M^{p_{0},p_{0}'}}+ N^{-\gamma}\|\phi_{0}\|_{L^2} \lesssim \|u_{0}\|_{M^{p,p'}}, \quad\gamma=\frac{1-\theta}{\theta}=\frac{\frac{1}{p}-\frac{1}{2}}{\frac{1}{p_{0}}-\frac{1}{p}}.
    \end{equation}
Rearranging \eqref{bound2}, we get our result.
\end{proof}
{\bf Acknowledgements :} The second author acknowledges the financial support from the University
Grants Commission (UGC), India (file number 201610135365) for pursuing the PhD program. The second author would like to thank Prof. Ryosuke Hyakuna for clarifying key aspects of their research. 
\vspace{-0.6cm}
\bibliographystyle{plain}
\bibliography{DDV18.bib}
\end{document}